\newtheorem{theorem}{Theorem}[section]
\newtheorem{lemma}[theorem]{Lemma}
\newtheorem{remark}[theorem]{Remark}
\theoremstyle{definition}
\newtheorem{definition}[theorem]{Definition}
\newtheorem{example}[theorem]{Example}
\def\R{\mathbb{R}}
\def\N{\mathbb{N}}
\def\Z{\mathbb{Z}}
\def\T{\mathbb{T}}
\def\C{\mathbb{C}}
\def\eps{\varepsilon}
\newcommand{\fe}{\mathrm{e}}
\newcommand{\bN}{{\mathbb N}}
\numberwithin{equation}{section}
\begin{document}
\hypersetup{pageanchor=false}
\pagenumbering{arabic}

\title[NLS with rough potential]{The cubic nonlinear Schr\"odinger equation with rough potential}

\author[N. Mauser]{Norbert J. Mauser}
\address{\hspace*{-12pt}N.~Mauser: Research Platform MMM c/o Fak. Mathematik, Univ. Wien, Oskar-Morgenstern-Platz 1, A-1090, Vienna.}
\email{mauser@courant.nyu.edu}
\urladdr{https://www.wpi.ac.at/director.html}

\author[Y. Wu]{Yifei Wu}
\address{\hspace*{-12pt}Y.~Wu: Center for Applied Mathematics, Tianjin University, 300072, Tianjin, P. R. China.}
\email{yerfmath@gmail.com}
\urladdr{http://cam.tju.edu.cn/~yfw/}

\author[X. Zhao]{Xiaofei Zhao}
\address{\hspace*{-12pt}X.~Zhao: School of Mathematics and Statistics \& Computational Sciences Hubei Key Laboratory, Wuhan University, 430072 Wuhan, China}
\email{matzhxf@whu.edu.cn}
\urladdr{http://jszy.whu.edu.cn/zhaoxiaofei/en/index.htm}

\subjclass[2010]{65M12, 65M15, 35Q55}

\keywords{Nonlinear Schr\"{o}dinger equation, rough potential, well-posedness, ill-posedness, numerical solution, convergence}

\date{}
\dedicatory{}

\begin{abstract}\noindent
We consider the cubic nonlinear Schr\"odinger equation with a spatially rough potential, a key equation in the mathematical setup for nonlinear Anderson localization. Our study comprises two main parts: new optimal results on the well-posedness analysis on the PDE level, and subsequently a new efficient numerical method, its convergence analysis and simulations that illustrate our analytical results.
In the analysis part, our results focus on understanding how the regularity of the solution is influenced by the regularity of the potential, where we provide quantitative and explicit characterizations. Ill-posedness results are also established to demonstrate the sharpness of the obtained regularity characterizations and to indicate the minimum regularity required from the potential for the NLS to be solvable.
Building upon the obtained regularity results, we design an appropriate numerical discretization for the model and establish its convergence with an optimal error bound. The numerical experiments in the end not only verify the theoretical regularity results, but also confirm the established convergence rate of the proposed scheme. Additionally, a comparison with other existing schemes is conducted to demonstrate the better accuracy of our new scheme in the case of a rough potential.

\end{abstract}
\maketitle

\tableofcontents

\section{Introduction}\label{sec:introduction}

We deal with the following cubic nonlinear Schr\"odinger equation (NLS) with a (``rough") spatial potential
\begin{equation}\label{model}
 \left\{\begin{aligned}
& i\partial_tu(t,x)+\partial_{xx} u(t,x)+\xi(x) u(t,x)
=\lambda|u(t,x)|^2u(t,x),
 &&\mbox{for}\,\,\, x\in\T\,\,\,\mbox{and}\,\,\, t>0, \\
 &u(0,x)=u_0(x), &&\mbox{for}\,\,\,  x\in\T.
 \end{aligned}\right.
\end{equation}
Here $u(t,x): \R^+\times\T\to\C$ is the complex-valued unknown ``wave function". We choose the setup on the one-dimensional torus $\T=(-\pi,\pi)$, which is an appropriate setting for efficient numerical methods.  $\xi(x):\T\to\R$ is a given real-valued potential that can ``rough" in terms of regularity.
$u_0(x)$ is a given initial wave function, and $\lambda\in\R$ a given parameter. Here $\lambda<0$ resp- $\lambda>0$ corresponds to the focusing resp. defocusing nonlinear self-interaction cases. \\
The solution of this NLS satisfies the following mass and energy conservation laws:
\begin{subequations}
\begin{align}
&\frac1{2\pi}\int_\T |u(t,x)|^2\,d x = \frac1{2\pi}\int_\T |u_0(x)|^2\,d x,  \quad\mbox{for}\,\,\, t>0, \label{mass}\\
&\int_\T \left[|\partial_x u(t,x)|^2-\xi(x)|u(t,x)|^2+\frac{\lambda}{2}|u(t,x)|^4\right]dx=
 \int_\T \left[|\partial_x u_0|^2-\xi|u_0|^2+\frac{\lambda}{2}|u_0|^4\right]dx.
\end{align}
\end{subequations}

When the potential function $\xi(x)$ is considered as a rough/random potential, (\ref{model}) is often called the disordered NLS, as it arises in the context of the so-called Anderson localization:  In 1958,  Philip W. Anderson discovered for the linear and lattice Schr\"odinger model that the waves in the solution will be localized when the potential $\xi$ is spatially random/rough enough \cite{Anderson}. Such phenomenon found important applications, e.g., for semiconductors. Note that the NLS (\ref{model}) is the continuous version of a discrete/discretized ``lattice" model.
Nonlinearities in the Schr\"odinger equation pose many deep questions for physicists and mathematicians. E.g., in the defocusing NLS case where the interaction tends to push the wave to the far field, there is a long-standing debate about whether the localization dominates in the end or the spreading dominates. There is a wide range of works to address this problem from different aspects, e.g., \cite{BourgainWang,review,prl2,Kachman,prl1,Sergey,Wang2}. Numerical simulations were done to suggest answers on the nonlinear lattice model, see e.g., \cite{review,prl2,prl1}.\\
However, the mathematical and numerical studies of (\ref{model}) are  far from done.
With this article we contribute new results to both in a setup on the torus in one space dimension for (\ref{model}) that serves as a valid truncation of localized wave dynamics up to a finite time.

For applications of the nonlinear localization model (\ref{model}) in physics, we refer to \cite{Conti,nonlinear wave,pra,PRLnew}. On the mathematical level, let us briefly review some theoretical results on (\ref{model}) related to our work. Without the potential function, i.e., $\xi\equiv0$, it is known that  \eqref{model} is globally well-posed in $H^s$ for $s\ge 0$; see e.g., \cite{Bo,Taobook}.
With $\xi(x)$ an $L^\infty$-potential, Cazenave \cite{Ca} showed the global well-posedness of (\ref{model}) for small initial data in $H^1$. For potentials that are stochastic in time but rather regular in space, we refer to \cite{debussche2} for the well-posedness theory of the model.
For less regular spatial random potentials, \cite{Labbe,CAL1,CAL0} studied the spectrum of the linear part of the operator in (\ref{model}), i.e., $-\partial_x^2+\xi$, and the results revealed the mechanism of localization in the continuous level.
Rodnianski and Schlag \cite{Schlag} analyzed the decaying property of the solution for the linear Schr\"odinger model.
For a spatial white noise potential, the work of   \cite{debussche3,debussche1} shows that (\ref{model}) has a solution almost surely in $H^1$ provided the smallness of the initial data.
For a deterministic rough potential or one precise sample of a spatial noise, the sharp regularity of the solution of (\ref{model}) is not clearly known from the existing results.

Although the numerical methods for Schr\"odinger models have been extensively developed in the literature, see e.g.,  \cite{Bao,BaoCai,Hong,Jin}, the existing studies mostly concern a smooth setup and do not work for the NLS equations with rough potentials.
A clear understanding of the regularities in (\ref{model}) is a basis for optimal numerical methods and convergence analysis, that allow for reliable
simulations of the nonlinear localization or delocalization phenomenon of waves in the model (\ref{model}).
A rough potential $\xi$ is felt in the solution $u$ of (\ref{model}) and will certainly bring serious numerical difficulties due to the simultaneous low-regularity in $\xi$ and $u$. Indeed, the low-regularity not just  affects the spatial discretization accuracy but also affects temporal discretization error, because the truncation  error of an  approximation usually involves derivatives of the functions. Popular traditional schemes for NLS models like the finite difference methods or operator/time splitting methods or exponential integrators can suffer from severe loss of accuracy when the solution is not smooth enough \cite{Bao1,dnls-fd,lownls}. In numerical experiments below, we will show their poor performance on approximating (\ref{model}) in case of rough potential.

In this work, we first address the well-posedness of the NLS equation (\ref{model}). Although in the original spirit of Anderson, the potential is generated according to some random distribution, the localization phenomenon as such is not a mere stochastic behaviour. It occurs for any rough enough potential $\xi$.
Therefore, we consider the deterministic case and focus on the effect of the roughness. In particular, we aim to understand how the regularity of the potential affects the regularity of the solution of (\ref{model}). We then continue with the numerical analysis and propose accurate discretizations for (\ref{model}). Hence, in the second part of the work, we are going to  derive an efficient tailored low-regularity integrator for solving (\ref{model}) based on the established regularity results from the first part, and then we address that under a rough potential with certain regularity, what the numerical method could offer for accuracy. The accurate computational results will in turn verify the theoretical regularity results of the solution.\\

The main theoretical results of this paper are given in the following theorems.

\subsection{Well-posedness theory}\label{sec2:well-posed}
Firstly,  let us introduce the following space $\hat b^{s,p}$ to characterize the regularity of a function $f(x)$ on $\T$ (particularly for the potential function $\xi(x)$) based on its Fourier coefficients:
\begin{equation}\label{bhat-def}
\hat b^{s,p}=\hat b^{s,p}(\T)\triangleq \{f(x): |\hat f_0|+\||k|^s\hat f_k\|_{l^p}<+\infty\},\quad f(x)=\sum\limits_{k\in\Z}\hat{f}_k \fe^{i kx},\quad x\in\T,
\end{equation}
where $s\geq0$ describes the differentiability and $1\le p\le  \infty$ gives the integrability.
When $p=2$, it is the usual Sobolev space $\hat b^{s,2}=H^s$. For simplicity, we denote $\hat b^{0,p}$ by $\hat l^p$.
A critical characteristic index $\gamma_p$ is defined as
\begin{equation}\label{gammap-def}
\gamma_p\triangleq\frac32+\frac1p.
\end{equation}
Before stating the results, we recall the following definition for the well-posedness.

\begin{definition}[Well-posedness]\label{def1}
The well-posedness of a time dependent PDE can be defined as follows:
Denote by $C_t\left(I;X_0\right)$ the space of continuous functions from the time interval $I$ to the topological space $X_0$. We say that the Cauchy problem is locally well-posed in $C_t\left(I;X_0\right)$ if the following properties hold:
\begin{enumerate}
\item There is unconditional uniqueness in $C_t\left(I;X_0\right)$ for the problem.
\item For every $u_{0}\in X_0$, there exists a strong solution defined on a maximal time interval $I=\left[0,T_{\max}\right)$, with $T_{\max}\in (0,+\infty]$.
\item The solution map $u_0\mapsto u[u_0]$ is continuous from $X_0$ to $X_0$.
\end{enumerate}
\end{definition}

\setcounter{theorem}{0}
\begin{remark}
 The well-posedness defined above is stronger than the common definition, which can be regarded as the unconditional well-posedness.
\end{remark}

Now we state our first well-posedness result for the NLS equation \eqref{model}.


\setcounter{theorem}{0}
\begin{theorem}[\bf{Well-posedness for $\hat b^{s,p}$-potential}]\label{main:thm1}
Let $\xi\in \hat b^{s,p}$ for $s\ge 0,2<p\le \infty$, so $\gamma_p\in(\frac32,2)$.
Then,
  \eqref{model} is locally well-posed in $H^{s+\gamma_p-}(\T)$ (notation $\gamma_p-$ explained in \cref{sec2:subsec1} (iii), (iv).
\end{theorem}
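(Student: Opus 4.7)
The plan is to recast \eqref{model} in Duhamel form
\begin{equation*}
u(t) = e^{it\partial_{xx}} u_0 + i\int_0^t e^{i(t-s)\partial_{xx}}\bigl[\xi(x) u(s) - \lambda |u(s)|^2 u(s)\bigr]\, ds,
\end{equation*}
and to run a Banach fixed-point argument in a Bourgain-type resolution space $X^{\sigma, 1/2+}_T$ with $\sigma = s + \gamma_p - \eps$ and $T$ chosen small enough. The standard embedding $X^{\sigma, 1/2+}_T \hookrightarrow C([0,T]; H^\sigma)$ then yields local well-posedness in $H^{s + \gamma_p -}(\T)$. The contraction reduces to two multilinear bounds of the form, for some $\theta > 0$,
\begin{align*}
\||u|^2 u\|_{X^{\sigma, -1/2+}_T} &\lesssim T^\theta \|u\|^3_{X^{\sigma, 1/2+}_T}, \\
\|\xi u\|_{X^{\sigma, -1/2+}_T} &\lesssim T^\theta \|\xi\|_{\hat b^{s, p}} \|u\|_{X^{\sigma, 1/2+}_T}.
\end{align*}

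The cubic estimate is classical: since $\sigma > 3/2 > 1/2$, one is well inside the algebra regime, and the standard $X^{s,b}$ trilinear estimate of Bourgain on $\T$ applies \cite{Bo} without modification, so I would simply cite it.

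The substantive work lies in the linear-potential estimate. I would expand the product in Fourier, $\widehat{\xi u}(\tau, k) = \sum_{j \in \Z} \hat\xi_{k-j}\, \tilde u(\tau, j)$, and dualize against an arbitrary $\tilde V \in L^2_{\tau, k}$. Factoring $|\hat\xi_m| = \l m\r^{-s} H(m)$ with $\|H\|_{\ell^p_m} = \|\xi\|_{\hat b^{s,p}}$ and applying H\"older in the summation variable $m = k - j$ strips off $\|\xi\|_{\hat b^{s,p}}$, leaving a bilinear form in $(u, V)$ with a weight involving $\l k\r^\sigma \l j\r^{-\sigma} \l m\r^{-s} \l\tau + k^2\r^{-1/2+} \l\tau + j^2\r^{-1/2-}$. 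To control this residual expression I would perform a Littlewood--Paley-type dyadic decomposition on both the frequencies $(k, j)$ and the modulations $\l\tau + k^2\r, \l\tau + j^2\r$, using the Schr\"odinger resonance identity $k^2 - j^2 = (k-j)(k+j)$, which forces the larger modulation to dominate $|k-j|\cdot|k+j|$. This is the source of a derivative gain reminiscent of elliptic smoothing for $-\partial_{xx}$. Combined with Bourgain's $L^4_{t,x}(\T)$ Strichartz estimate and a divisor-counting bound on $\Z$, the dyadic pieces can then be resummed.

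The main obstacle is matching the target exponent $\gamma_p = \tfrac32 + \tfrac1p$ with the bookkeeping of this dyadic sum. The heuristic is that $\hat b^{s,p} \hookrightarrow H^{s - 1/2 + 1/p -}$, so $\xi$ effectively possesses $s - \tfrac12 + \tfrac1p$ derivatives, while the resonance/modulation mechanism contributes an elliptic-style gain of roughly $2$ derivatives; these sum to $s + \tfrac32 + \tfrac1p = s + \gamma_p$. The delicate regime is the high-low interaction $|k-j|\gg|j|$, where the weight ratio $\l k\r^\sigma / \l j\r^\sigma \sim \l k-j\r^\sigma$ is large and must be absorbed entirely by the resonance gain; the sharpness of $\gamma_p$, confirmed by the ill-posedness results stated later in the paper, indicates this balance is tight. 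Finally, the unconditional uniqueness required by \cref{def1} would follow by a persistence-of-regularity argument showing that any $C_t H^{s + \gamma_p -}$-solution automatically lies in $X^{\sigma, 1/2+}_T$ for $T$ small enough, at which point the uniqueness within the contraction space closes the argument.
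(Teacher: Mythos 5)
Your overall strategy (Duhamel, fixed point, and exploiting the non‑resonance identity $k^2-j^2=(k-j)(k+j)$ to trade the high frequency of $\xi$ for a derivative gain) is the right mechanism, and it is the same mechanism the paper uses. But there is a quantitative gap that, as far as I can see, kills the argument as written: the key bilinear estimate
$\|\xi u\|_{X^{\sigma,-1/2+}}\lesssim \|\xi\|_{\hat b^{s,p}}\|u\|_{X^{\sigma,1/2+}}$
is false for $\sigma$ near $s+\gamma_p$. In the duality pairing the two modulation weights contribute at most
$\langle\tau+j^2\rangle^{-\frac12-\delta}\langle\tau+k^2\rangle^{-\frac12+\delta}\lesssim \max\big(\langle\tau+j^2\rangle,\langle\tau+k^2\rangle\big)^{-\frac12+\delta}\lesssim |k-j|^{-1+2\delta}|k+j|^{-1+2\delta},$
because only the \emph{maximum} of the two modulations is guaranteed to dominate $|k^2-j^2|$; the other may be $O(1)$. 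So in the high–low regime $|m|=|k-j|\gg|j|$ the resonance buys you only \emph{one} derivative in $m$, not the ``roughly $2$'' your heuristic requires, and the bookkeeping closes only up to $\sigma<s+\tfrac12+\tfrac1p$ — a full derivative short of $s+\gamma_p$. A concrete counterexample: take $\xi=2\langle M\rangle^{-s}\cos(Mx)$ (so $\|\xi\|_{\hat b^{s,p}}\sim1$) and $u$ a time‑cutoff constant; then $\xi u$ lives at frequency $M$ and modulation $\langle\tau+M^2\rangle\sim M^2$, and $\|\xi u\|_{X^{\sigma,-1/2+\delta}}\gtrsim M^{\sigma-s-1+2\delta}\to\infty$ once $\sigma>s+1$, while $\gamma_p>\tfrac32$. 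Two secondary issues: the exactly resonant set $j=-k$ (where $k^2-j^2=0$ but $m=2k$ is large) gives no modulation gain at all and must be handled separately via $\hat\xi_{2k}\in\ell^\infty$; and the unconditional uniqueness demanded by Definition~\ref{def1} does not come for free from a contraction in an auxiliary $X^{\sigma,b}$ space — your one‑line persistence claim is itself a nontrivial step.

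The paper avoids this by never placing $\xi u$ in a $b=-\tfrac12+$ space: it contracts directly in $L^\infty_tH^{s+\gamma}_x$ and extracts the \emph{full} factor $|\phi(k,k_2)|^{-1}\sim\langle k_1\rangle^{-2}$ by integrating by parts in time (a normal form), which is the analogue of spending modulation exponent $1$ rather than $\tfrac12$. The price is that the time derivative then lands on $v$, producing correction terms containing $\xi$ twice and $\xi\cdot|u|^2u$; these are handled by a \emph{second} integration by parts and a separate treatment of the resonant frequencies $k_2=\pm k$, with smallness obtained from a low/high frequency splitting ($TN_0^{s+\gamma}+N_0^{-\varepsilon_0/2}$) rather than from a clean $T^\theta$. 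If you want to keep the Bourgain‑space framework, you would need to perform this normal‑form reduction first (or equivalently restrict to the output‑modulation‑dominant region via a $Y^{\sigma}$‑type $\ell^2_kL^1_\tau$ companion norm and substitute the equation for $u$ in the input‑modulation‑dominant region); as written, your proposal is missing exactly this iteration, which is the heart of the proof.
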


\setcounter{theorem}{1}
\begin{remark}\label{remark1}
In   \cref{main:thm1}, the regularity of the solution is essentially determined by the regularity of the potential.
In order to focus on the effect from the potential, we will not address the impact from rough initial data in this work.  On the other hand, requiring more regularity from the initial data will neither  improve the well-posedness result. We will explain this point in \cref{remark smooth} and also in \cref{sec:numer1}  by numerical experiments.
\end{remark}
\begin{remark}
Note that $L^1(\T)\hookrightarrow \hat l^\infty$. Therefore, \cref{main:thm1} with $s=0,p=\infty$ covers particularly the case of $\delta$-potential, i.e., $\xi(x)=\delta(x)$ in \eqref{model}. Note that the well-posedness of NLS under a delta potential is known only in $H^1$  in the literature \cite{Fukuizumi,Goodman}. Now \cref{main:thm1} states that the regularity $H^{\frac32-}$ can be attained, and this will be shown to be sharp.
 \end{remark}

Let $s\ge 0, 1\le r\le  \infty$. Denote
$$
H^{s,r}=H^{s,r}(\T)\triangleq\{f(x): \|f\|_{L^r(\T)}+\big\|(-\partial_x^2)^\frac s2 f\big\|_{L^r(\T)}<+\infty\}.
$$
Note that by Young's inequality, we have that $\hat b^{s,p} \hookrightarrow H^{s,p'}(\T)$ where $1/p'+1/p=1$. If we replace $\hat b^{s,p}(\T)$ for $s\ge 0,\,2\le p< \infty$ by a slightly stronger space  $H^{s,p'}(\T)$, then we can cover the endpoint regularity.

%
\setcounter{theorem}{1}
\begin{theorem}[\bf{Well-posedness for $H^{s,p'}$-potential}]\label{main:thm3-smoothdata}
Let $\xi\in H^{s,p'}(\T)$ for $s\ge 0,\,2\le p< \infty$ and so $\gamma_p\in(\frac32,2]$. Then, \eqref{model} is locally well-posed in  $H^{s+\gamma_p}(\T)$.
\end{theorem}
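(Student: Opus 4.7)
The plan is to mirror the contraction scheme underpinning \cref{main:thm1} while exploiting the extra structure of the physical-side space $H^{s,p'}(\T)$ in order to remove the $\eps$-loss. The point is that Hausdorff--Young gives a strict embedding $H^{s,p'}\hookrightarrow \hat b^{s,p}$ for $2\le p<\infty$, so the potential here is more structured than what \cref{main:thm1} requires, and this extra structure can be cashed in at the endpoint. Concretely, I would recast \eqref{model} via Duhamel,
\[
u(t)=\fe^{it\partial_x^2}u_0+i\int_0^t \fe^{i(t-s)\partial_x^2}\bigl[\xi\,u(s)-\lambda|u(s)|^2 u(s)\bigr]\,ds,
\]
and run a Banach fixed-point iteration in a Bourgain-type space $X^{s+\gamma_p,b}_T$ with $b>\tfrac12$ intersected with $C_t H^{s+\gamma_p}$. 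Since $\gamma_p>\tfrac32$, $H^{s+\gamma_p}(\T)$ is a Banach algebra, so the cubic self-interaction obeys the standard trilinear estimate $\||u|^2u\|_{X^{s+\gamma_p,b-1}_T}\lesssim T^{\theta}\|u\|_{X^{s+\gamma_p,b}_T}^3$ and poses no difficulty.

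The decisive step is the sharp endpoint bilinear estimate for the potential contribution,
\[
\|\xi u\|_{X^{s+\gamma_p,b-1}_T}\lesssim T^{\theta}\,\|\xi\|_{H^{s,p'}}\,\|u\|_{X^{s+\gamma_p,b}_T}.
\]
Decomposing $\widehat{\xi u}(k)=\sum_{k_1+k_2=k}\hat\xi_{k_1}\hat u_{k_2}$ and splitting into the usual high-high, high-low, and low-high frequency regimes, the estimate reduces to controlling convolutions against $\hat\xi$. In the proof of \cref{main:thm1}, the only information available is the $\ell^p$-summability of $\hat\xi$, which forces a Holder inequality with an arbitrarily small power loss. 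With $\xi\in L^{p'}$ now in hand, one can instead invoke the full Hausdorff--Young inequality $\|\hat\xi\|_{\ell^p}\lesssim\|\xi\|_{L^{p'}}$ at the sharp exponents dictated by Bourgain's $L^4_{t,x}$-Strichartz on $\T$ (or a suitable trilinear variant), which saturates the embedding and eliminates the $\eps$-loss.

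The main technical obstacle will be the high-high-to-lower frequency regime, where the output frequency $k$ is not larger than either input, so the weight $\langle k\rangle^{s+\gamma_p}$ must be distributed between $\xi$ (with only $s$ derivatives) and $u$ (with $s+\gamma_p$). The missing $\gamma_p$ derivatives on the $\xi$-side have to be recovered from the modulation weight $\langle\tau-k^2\rangle^{b}$ via the dispersive algebraic identity
\[
(\tau-k^2)-\bigl(\tau-(k-k_1)^2\bigr)=k_1(2k-k_1),
\]
which produces the resonance gain needed to close the estimate precisely at $H^{s+\gamma_p}$. Once the bilinear and trilinear estimates are in place, the contraction principle provides local existence on $[0,T]$ with $T$ depending on $\|u_0\|_{H^{s+\gamma_p}}$ and $\|\xi\|_{H^{s,p'}}$; Lipschitz dependence on $u_0$ follows from the same bilinear/trilinear bounds applied to the difference equation, and unconditional uniqueness is obtained via a Gronwall argument on the difference of two candidate solutions sharing the same initial datum.
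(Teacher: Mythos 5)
Your plan is a direct contraction in $X^{s+\gamma_p,b}$ with $b>\tfrac12$, but the one estimate on which everything hinges --- the \emph{endpoint} bilinear bound $\|\xi u\|_{X^{s+\gamma_p,b-1}}\lesssim\|\xi\|_{H^{s,p'}}\|u\|_{X^{s+\gamma_p,b}}$ --- is exactly the hard part, and the mechanism you offer for removing the $\eps$-loss does not address where that loss actually comes from. In the sub-endpoint argument of \cref{main:thm1}, the loss is not caused by a lossy use of Hausdorff--Young on $\hat\xi$: it appears in the regime $|k_1|\gg|k_2|$ (potential at high frequency), where one must gain the full $\gamma_p$ derivatives from the resonance factor $|\phi(k,k_2)|\sim|k_1|^2$, and the resulting sum $\big\|\langle k_1\rangle^{\gamma-2}\big\|_{l^r}$ with $\tfrac1r=\tfrac12-\tfrac1p$ satisfies $(\gamma_p-2)r=-1$ exactly, i.e.\ it diverges logarithmically at $\gamma=\gamma_p$. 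Replacing $\|\hat\xi\|_{\ell^p}$ by $\|\xi\|_{L^{p'}}$ via Hausdorff--Young changes nothing in that sum. Worse, in the $X^{s,b}$ framework the modulation weight $\langle\tau-k^2\rangle^{b-1}$ with $b>\tfrac12$ yields only $|\phi|^{-(1-b)}$ with $1-b<\tfrac12$, i.e.\ strictly less than one derivative of gain, whereas at $p=2$ you need the full two derivatives ($\gamma_2=2$). So the estimate as sketched would fail, and no indication is given of how the physical-space integrability of $\xi$ is to be converted into the missing gain.

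The paper takes a different route that sidesteps the endpoint difficulty entirely: it differentiates the equation in time. Setting $\varphi=\partial_t u$, one shows $\varphi\in C_tH^{s-\frac12+\frac1p}=C_tH^{s+\gamma_p-2}$, which sits two full derivatives \emph{below} the critical threshold for a $\hat b^{s,p}$-potential, so the (lossy) machinery of \cref{lem:Phi-i} applies to the equation for $\varphi$ with room to spare; the cubic term $\mathcal O(u^2\varphi)$ at negative regularity is handled by a Littlewood--Paley decomposition and Schur's test (\cref{lem:schurtest}). The endpoint regularity of $u$ is then recovered from the elliptic identity $\partial_{xx}u=-i\varphi-\xi u+|u|^2u$, where the product $\xi u$ is estimated in $H^{s-\frac12+\frac1p}$ via the Sobolev embedding $L^{p'}\hookrightarrow H^{-\frac12+\frac1p}$ and a fractional Leibniz rule --- this is the precise point where the hypothesis $\xi\in H^{s,p'}$, rather than merely $\hat b^{s,p}$, is cashed in. If you want to salvage your approach, you would need to supply an endpoint bilinear estimate of your own (presumably at $b=\tfrac12$ in a $Y^s$- or $U^p/V^p$-type space), which is a substantially harder undertaking than the bootstrap the paper uses.
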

%

\subsection{Ill-posedness theory}
When one of the conditions in  \cref{def1} is violated,
 the Cauchy problem \eqref{model} is said to be ill-posed in space $X_0$. In this work, we refer to the violation of the third condition (around zero solution), which is especially relevant to valid numerical approximations. More precisely, the definition is the following.
 \setcounter{theorem}{1}
\begin{definition}[Ill-posedness]\label{def2}
Let $\epsilon>0$ and denote
\begin{equation} \label{Beps-def}
B(\epsilon) \triangleq \{u_0\in \mathcal{S}: \big\|u_0\big\|_{X_0}\le \epsilon\},\quad
\mbox{where}\ \mathcal{S}\ \mbox{is the Schwartz space}.
\end{equation}
By ``ill-posedness" of (\ref{model}) in $X_0$ we mean that  for arbitrarily small $\epsilon>0$, there exists a $u_0\in B(\epsilon)$ such that  the solution map $u_0\mapsto u[u_0]$ is discontinuous from $X_0$ to $C([0,1];X_0)$.
\end{definition}

The following result shows that for general $ \hat b^{s,p}$-potential $\xi(x)$, we can only expect the solution to be in $H^{s+\gamma_p}(\T)$ at most. This hence implies the sharpness of \cref{main:thm1}.
\setcounter{theorem}{2}
\begin{theorem}[\bf{Ill-posedness for $\hat b^{s,p}$-potential}]\label{main:thm1-ill}
For $s\geq0,\,2< p\le \infty$ and any $\gamma\ge \gamma_p$, there exists some $\xi\in \hat b^{s,p}$  such that \eqref{model} is ill-posed in $H^{s+\gamma}(\T)$.
\end{theorem}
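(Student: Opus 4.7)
The strategy is to construct a single $\xi\in\hat b^{s,p}$ whose Fourier coefficients sit exactly at the threshold of the space, and then to show that for suitable small plane-wave data the first Picard iterate already escapes $H^{s+\gamma}(\T)$; this will propagate to a discontinuity of the full solution map at $0$. Since $p>2$ the interval $(1/p,1/2]$ is non-empty, so fix any $\beta$ in it and set $\hat\xi_0=0$ and $\hat\xi_k=|k|^{-s-1/p}(\log(2+|k|))^{-\beta}$ for $k\ne 0$. A direct computation of $\sum_{k\ne 0}|k|^{-1}(\log(2+|k|))^{-\beta p}$ (finite since $\beta p>1$) shows $\xi\in\hat b^{s,p}$, and the chosen decay is sharp on this scale.

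Consider the data $u_0^{(n)}(x)=\epsilon_n\,e^{ik_nx}\in\mathcal{S}$ with $k_n\to\infty$ and $\epsilon_n=n^{-1}|k_n|^{-s-\gamma}$, so that $\|u_0^{(n)}\|_{H^{s+\gamma}}=n^{-1}\to 0$. The leading correction to the free evolution in the Duhamel expansion of \eqref{model} around $u\equiv 0$ is
$$I_\xi[u_0^{(n)}](t)\triangleq i\int_0^t e^{i(t-s)\partial_x^2}\bigl(\xi\, e^{is\partial_x^2}u_0^{(n)}\bigr)\,ds,$$
whose $j$-th Fourier coefficient is directly computed to be $\epsilon_n\hat\xi_{j-k_n}\,e^{-ij^2t}\,(e^{i(j^2-k_n^2)t}-1)/(j^2-k_n^2)$. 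Using $|e^{i\alpha}-1|^2=4\sin^2(\alpha/2)$ and $\int_0^1\sin^2(\alpha t/2)\,dt\ge 1/4$ for $|\alpha|\ge 2$, insertion of the explicit $\hat\xi$ and restriction to the tail $|j|\gg|k_n|$ yield
$$\int_0^1\bigl\|I_\xi[u_0^{(n)}](t)\bigr\|_{H^{s+\gamma}}^2\,dt\ \gtrsim\ \epsilon_n^2\!\sum_{|j|\gg|k_n|}\!\frac{|j|^{2(\gamma-\gamma_p)-1}}{(\log|j|)^{2\beta}}\ =\ +\infty$$
for every $\gamma\ge\gamma_p$: the logarithmic factor in $\hat\xi$ handles the critical case $\gamma=\gamma_p$ (where $2\beta\le 1$ makes the sum diverge), while for $\gamma>\gamma_p$ the exponent $2(\gamma-\gamma_p)-1>-1$ already forces divergence. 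In particular $\|I_\xi[u_0^{(n)}](t)\|_{H^{s+\gamma}}=\infty$ for a.e.\ $t\in[0,1]$.

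Assume for contradiction that the solution map $\Phi$ is continuous at $0$ from $H^{s+\gamma}$ to $C([0,1];H^{s+\gamma})$; then $\Phi[u_0^{(n)}]\to 0$ in $C([0,1];H^{s+\gamma})$. Since $s+\gamma>\gamma_p>1/2$, $H^{s+\gamma}(\T)$ is an algebra, so the cubic contribution to the Duhamel identity is controlled in $L^\infty_t H^{s+\gamma}$ by $\|\Phi[u_0^{(n)}]\|_{L^\infty_t H^{s+\gamma}}^3$. Combining this with the product estimate $\|\xi v\|_{\hat b^{s,p}}\lesssim \|\xi\|_{\hat b^{s,p}}\|v\|_{H^{s+\gamma}}$ extracted from the proof of \cref{main:thm1}, together with the two-derivative gain of the Schr\"odinger Duhamel integral, each higher $\xi$-iteration contained in $\Phi[u_0^{(n)}]-e^{it\partial_x^2}u_0^{(n)}-I_\xi[u_0^{(n)}]$ remains bounded in $L^2_t H^{s+\gamma}$. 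The Duhamel identity then forces $I_\xi[u_0^{(n)}]$ itself to be uniformly bounded in that same space, contradicting the previous step. The main technical difficulty lies precisely in this bootstrap: rigorously isolating $I_\xi[u_0^{(n)}]$ as the \emph{sole} obstruction by showing that every sub-leading term is strictly more regular than $I_\xi[u_0^{(n)}]$; all remaining steps reduce to the explicit Fourier-side computations above.
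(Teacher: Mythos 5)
Your construction of $\xi$ and the divergence computation for the first nontrivial Duhamel iterate are essentially correct and closely parallel the paper's: the paper also places the Fourier coefficients of $\xi$ exactly at the $\hat b^{s,p}$ threshold (with a logarithmic correction $(\ln|k|)^{-\alpha}$, $\frac1p<\alpha<\frac12$, for $2<p<\infty$), and it also derives ill-posedness from the unboundedness of the second Picard iterate $A_2$. The differences are cosmetic: the paper takes constant data $u_0=\epsilon$ and evaluates at a single well-chosen time $t=\frac\pi2 M_0^{-2}$ (so that $\fe^{itk^2}=i$ on a lacunary set of frequencies, giving a pointwise-in-time lower bound), whereas you take high-frequency plane waves and average in time; both yield the same divergent sum $\sum|j|^{2(\gamma-\gamma_p)-1}(\log|j|)^{-2\beta}$. (Minor point: divergence of $\int_0^1\|I_\xi(t)\|^2\,dt$ does not by itself give $\|I_\xi(t)\|=\infty$ for a.e.\ $t$; but the $L^2_t$ statement suffices, so this is harmless.)

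The genuine gap is in your final step, which is also where you yourself flag the "main technical difficulty." To pass from the divergence of $I_\xi[u_0^{(n)}]$ to discontinuity of the solution map you propose a bootstrap: assume continuity, and show every term of $\Phi[u_0^{(n)}]-\fe^{it\partial_x^2}u_0^{(n)}-I_\xi[u_0^{(n)}]$ is bounded in $H^{s+\gamma}$. But the dominant remainder is $\int_0^t\fe^{i(t-\rho)\partial_x^2}\big(\xi\,(u^{(n)}(\rho)-\fe^{i\rho\partial_x^2}u_0^{(n)})\big)\,d\rho$, and bounding it in $H^{s+\gamma}$ from the smallness of $u^{(n)}-\fe^{i\rho\partial_x^2}u_0^{(n)}$ in $H^{s+\gamma}$ requires exactly the mapping property of $v\mapsto\int_0^t\fe^{i(t-\rho)\partial_x^2}(\xi v)\,d\rho$ on $H^{s+\gamma}$ whose failure at $\gamma\ge\gamma_p$ is the content of the theorem; the argument is circular as stated. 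Nor does the "two-derivative gain of the Schr\"odinger Duhamel integral" exist here: the well-posedness machinery of \cref{main:thm1} (normal forms exploiting $\phi(k,k_2)\neq0$) gains only $\gamma_p-$ derivatives over $\hat b^{s,p}$, strictly below your target regularity, and its constants degenerate as $\gamma\uparrow\gamma_p$. The paper closes this gap differently, via the Bejenaru--Tao lemma (\cref{lm:ill-tool}): quantitative well-posedness would force $\|A_2(f)\|_S\le C_1^2\|f\|_D^2$ for all Schwartz data, so the blow-up of $A_2$ on arbitrarily small smooth data directly contradicts (the quantitative form of) continuity of the solution map, with no need to control the higher iterates of the actual solution. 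You should either invoke that lemma or supply a non-circular replacement for your bootstrap; as written, the reduction from "the first iterate diverges" to "the solution map is discontinuous" is not established.
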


\setcounter{theorem}{3}
\begin{remark}\label{remark smooth}
Note that the case $s=0,p=\infty$ is not covered in \cref{main:thm3-smoothdata}, and indeed here \cref{main:thm1-ill} states that the endpoint regularity does not hold for $\xi \in \hat l^\infty$. As a matter of fact, for the case of  $\xi \in \hat l^\infty$, the counterexample we choose in the proof of \cref{main:thm1-ill} later is exactly $\xi(x)=\delta(x)$. Thus, for the $\delta$-potential of \eqref{model}, the regularity $C_t^0([0,T]; H^{\frac32-}(\T))$ is sharp. Moreover, in the proof we set a smooth initial data
$$
 u_0(x)= \epsilon\big(1+2\cos(2x)\big),\quad \forall\epsilon>0.
$$
This in general means that the smoothness of initial function will not make things better.
\end{remark}

For potentials that belong to the usual Sobolev space $H^s$, i.e., $p=p'=1/2$ in   \cref{main:thm3-smoothdata}, the following result states that $H^{s+2}$ is the most regularity that we can expect in general for the solution.
\setcounter{theorem}{3}
\begin{theorem}[\bf{Ill-posedness for $H^s$-potential}]\label{main:thm2-ill}
For $s\geq0$ and any  $\gamma> 2$, there exists some $\xi\in H^s$ such that \eqref{model} is ill-posed in  $H^{s+\gamma}(\T)$.
\end{theorem}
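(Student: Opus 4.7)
The plan is to pick a potential $\xi\in H^s(\T)$ at the boundary of $H^s$ regularity together with a Schwartz initial datum for which the first-order Duhamel correction already has infinite $H^{s+\gamma}$ norm for every $t>0$, and to propagate this failure to the full nonlinear solution, contradicting the continuity of the solution map in \cref{def2}. The threshold $\gamma=2$ arises because multiplication by a generic $\xi\in H^s$ preserves only $H^s$ regularity, while the Schr\"odinger--Duhamel integral on $\T$ gains exactly two derivatives via one integration by parts against the phase $e^{-i(t-\tau)k^2}$.

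Concretely, I would take a real-valued $\xi$ with Fourier coefficients $|\hat\xi_k|\simeq|k|^{-s-\frac12}(\log|k|)^{-\frac12-\delta}$ for some small $\delta>0$, so $\xi\in H^s\setminus\bigcup_{\eta>0}H^{s+\eta}$, and set $u_{0,\epsilon}=\epsilon\phi\in B(\epsilon)$ with $\phi\equiv 1\in\mathcal{S}(\T)$ for any $\epsilon>0$. Assume toward contradiction that $u_\epsilon\in C([0,1];H^{s+\gamma})$. Since $e^{it\partial_x^2}\phi=\phi$, the Duhamel identity gives $u_\epsilon(t)-\epsilon\phi=M(t)+R_\epsilon(t)$ with
\[
M(t)=i\epsilon\int_0^t e^{i(t-\tau)\partial_x^2}\xi\,d\tau,\qquad \widehat M_k(t)=\epsilon\hat\xi_k\,\frac{1-e^{-itk^2}}{k^2}\ (k\ne 0),
\]
and $R_\epsilon(t)=i\int_0^t e^{i(t-\tau)\partial_x^2}\bigl[\xi(u_\epsilon-\epsilon\phi)-\lambda|u_\epsilon|^2u_\epsilon\bigr]d\tau$. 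Using $|1-e^{-itk^2}|\le\min(tk^2,2)$ and splitting at the dispersive scale $|k|\sim t^{-1/2}$, the high-frequency tail of $\|M(t)\|_{H^{s+\gamma}}^2$ alone equals $\epsilon^2\sum_{|k|>t^{-1/2}}|k|^{2\gamma-5}/(\log|k|)^{1+2\delta}$, which for $\gamma>2$ diverges outright (the exponent $2\gamma-5>-1$), so $\|M(t)\|_{H^{s+\gamma}}=+\infty$ for every $t\in(0,1]$; the borderline $\gamma=2$ gives the convergent sum $\sum|k|^{-1}/\log^{1+2\delta}|k|$, consistent with \cref{main:thm3-smoothdata}.

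It then remains to show that $R_\epsilon$ has a \emph{finite} $H^{s+\gamma}$ norm, whence the identity $u_\epsilon-\epsilon\phi=M+R_\epsilon$ forces $\|u_\epsilon(t)-\epsilon\phi\|_{H^{s+\gamma}}=+\infty$, contradicting $u_\epsilon\in C([0,1];H^{s+\gamma})$ and establishing the ill-posedness for arbitrarily small $\epsilon$. Under the continuity hypothesis $\|u_\epsilon-\epsilon\phi\|_{L^\infty_tH^{s+\gamma}}\to 0$, so by the Sobolev product $\|\xi g\|_{H^s}\lesssim\|\xi\|_{H^s}\|g\|_{H^{s+\gamma}}$ (valid when $s+\gamma>\frac12$) and the algebra property of $H^{s+\gamma}$ for the cubic, the source of $R_\epsilon$ is bounded in $H^s$ with $o(1)$ norm plus a cubic remainder of size $O(\epsilon^3)$. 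A bound on $R_\epsilon$ in $H^{s+\gamma}$ then follows from iterated integration by parts in $\tau$ against $e^{-i(t-\tau)k^2}$, each IBP gaining two derivatives and costing only one time derivative of the source, itself controlled by $\partial_tu_\epsilon\in L^\infty_tH^s$ from the equation. This bootstrap is the \textbf{main obstacle} of the proof: since $\gamma$ may exceed $2$ by any amount, a single Duhamel integration gains too few derivatives, and the iteration must carefully balance Sobolev multiplication losses by $\xi\in H^s$ (treated via Littlewood--Paley paraproducts in the non-algebra regime $s<\frac12$) against the two-derivative dispersive gain at each step, while keeping the constants summable so that the series effectively lives strictly above $H^{s+2}$ and thus in $H^{s+\gamma}$.
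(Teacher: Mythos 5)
Your first Picard correction is essentially the same object the paper uses (the linear-in-$\xi$ part of $A_2(u_0)$ with $u_0=\epsilon$), and your choice of $\xi$ plays the same role as the paper's $\hat\xi_k=|k|^{-s-\beta}$ with $2\beta-1<2(\gamma-2)$. But your overall logical scheme is different from the paper's and contains a genuine gap. The paper does \emph{not} decompose the true solution as ``divergent first iterate plus finite remainder''; it invokes the Bejenaru--Tao machinery (\cref{lm:ill-tool}): one only has to exhibit unboundedness of the single multilinear Picard iterate $A_2$ on arbitrarily small data, and the lemma converts that into discontinuity of the solution map, with no need to control the full Duhamel remainder at all. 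This is precisely the step your argument cannot avoid and cannot complete.

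The gap is your claim that $R_\epsilon$ is finite in $H^{s+\gamma}$. The source $\xi(u_\epsilon-\epsilon\phi)$ lies only in $H^s$ (multiplication by a generic $\xi\in H^s$ does not gain regularity), and one integration by parts against the phase gains exactly two derivatives, landing in $H^{s+2}$ --- which is strictly below $H^{s+\gamma}$ when $\gamma>2$. Your proposed fix, iterating the integration by parts, does not help: each iteration inserts another factor of $\xi$ into the multilinear expression (this is visible in the paper's own expansion $I_{21}+I_{22}+I_{23}$ in the proof of \cref{lem:Phi-i}), and each such factor re-caps the attainable regularity at $s+2$; the paper's well-posedness analysis shows the gain saturates at $\gamma_2=2$ and \cref{main:thm2-ill} itself asserts this saturation is sharp. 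So the second Picard iterate inside $R_\epsilon$ --- the term $\int_0^t e^{i(t-\rho)\partial_x^2}\xi\,M(\rho)\,d\rho$ and its successors --- is generically also of infinite $H^{s+\gamma}$ norm, and ``$\|M\|=\infty$, $\|R_\epsilon\|=\infty$'' yields no contradiction since the two could cancel. A secondary, fixable issue: your lower bound on $\|M(t)\|_{H^{s+\gamma}}$ silently replaces $|1-e^{-itk^2}|$ by a constant on the high frequencies, but for a fixed $t$ this factor vanishes or is small on large sets of $k$; the paper handles this by fixing $t=\tfrac{\pi}{2}M_0^{-2}$ and restricting to the frequency set $\mathbb K=\{M_0(2j+1)\}$ on which $e^{itk^2}=i$, so that $|e^{itk^2}-1|=\sqrt2$ exactly. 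To repair your proof you would have to abandon the remainder bootstrap and pass through \cref{lm:ill-tool} (or an equivalent argument showing the solution map is not $C^2$ at the origin), at which point you recover the paper's proof.
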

%
%

Last but not least, we show by the following theorem that for the NLS equation \eqref{model} to be well-posed, the potential $\xi\in\hat l^\infty$ is the lowest regularity requirement.
\begin{theorem}[\bf{Minimum regularity for potential}]\label{main:thm3}
For any  $s<0$ and $1\le p\le \infty$, there exists some $\xi\in \hat b^{s,p}\setminus l^\infty$ such that  for any $\gamma\in \R$, \eqref{model} is ill-posed in $H^\gamma(\T)$.
\end{theorem}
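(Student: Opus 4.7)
The approach is to construct a specific rough $\xi$ via a lacunary Fourier series, and then exploit the Picard expansion to produce $H^\gamma$-norm inflation for the associated linear flow, for every $\gamma \in \R$.

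For the construction, fix $\alpha \in (0, -s)$ and set $k_n = 2^n$, $\xi(x) = \sum_{n \ge 1} 2^{\alpha n} \cos(k_n x)$. The only nonzero Fourier coefficients are $\hat\xi_{\pm k_n} = \tfrac12 2^{\alpha n}$, so $|k|^s \hat\xi_k = \tfrac12 2^{(s + \alpha)n}$ at $k = \pm k_n$, which is geometrically summable (and bounded when $p = \infty$) because $s + \alpha < 0$. Hence $\xi \in \hat b^{s,p}$ for every $p \in [1, \infty]$, yet $|\hat\xi_{k_n}| \to \infty$ so $\xi \notin \hat l^\infty$. To deduce ill-posedness, fix $\gamma \in \R$ and argue by contradiction: if \eqref{model} were well-posed in $H^\gamma$, then for initial data $u_0 = \epsilon \varphi$ with $\varphi \in \mathcal{S}$, writing $u = \epsilon v$ and letting $\epsilon \downarrow 0$, continuity of the NLS solution map forces the linear flow of
\[
i\partial_t w + \partial_{xx} w + \xi w = 0, \qquad w(0) = \varphi,
\]
to be continuous from $H^\gamma$ into $C([0,1]; H^\gamma)$, the nonlinear correction being $O(\epsilon^3)$ against a linear $O(\epsilon)$. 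So it suffices to produce $\varphi$ (depending on $\gamma$) whose linear evolution leaves $H^\gamma$.

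For the easy regime $\gamma > 2 + s$, take $\varphi \equiv 1$. A direct Fourier computation gives the first Picard iterate $\hat w^{(1)}(t)_{k_n} = \hat\xi_{k_n}(1 - e^{-itk_n^2})/k_n^2$, so
\[
\|w^{(1)}(t)\|_{H^\gamma}^2 \gtrsim \sum_n 2^{n(2\gamma + 2\alpha - 4)}(1 - \cos(tk_n^2)),
\]
which diverges once $\gamma \ge 2 - \alpha$; letting $\alpha \to -s$ covers all $\gamma > 2 + s$. For the hard regime $\gamma \le 2 + s$, in particular $\gamma$ very negative, take instead $\varphi(x) = e^{iNx}$ with $N = k_m$ chosen among the lacunary frequencies: then $\|\varphi\|_{H^\gamma} = N^\gamma$ is small for $\gamma < 0$, while a higher Picard iterate $w^{(j)}$ picks up contributions at frequencies $N + \sigma_1 k_{n_1} + \cdots + \sigma_j k_{n_j}$ (with $\sigma_i \in \{\pm 1\}$) with amplitudes $\prod_i \hat\xi_{k_{n_i}}$ divided by products of resonance denominators of the form $k^2 - N^2$. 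Tuning $N$ and the indices $n_i$ to produce small resonance denominators and taking $j$ large enough drives $\|w^{(j)}(t)\|_{H^\gamma}$ to infinity.

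\emph{The main obstacle} is the low-regularity regime: first-order Picard iteration only handles $\gamma > 2 + s$, and for smaller $\gamma$ one must combine multi-level iteration with a careful resonance/counting argument. The lacunarity of $k_n = 2^n$ is essential, because it guarantees that the Fourier supports of the iterates $w^{(j)}$ at different levels $j$ are essentially disjoint; this prevents the divergent contribution at level $j$ from being cancelled by lower-order terms. Once the linear discontinuity is established, transferring it back to the full NLS is routine via the perturbative scaling above.
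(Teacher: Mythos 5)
Your construction of $\xi$ (lacunary series with $0<\alpha<-s$) is fine and in fact verifies membership in $\hat b^{s,p}$ for every $p$ more cleanly than the paper's choice $\hat\xi_k=\ln|k|$, and your first Picard iterate computation handles the high-regularity range $\gamma\ge 2-\alpha$ in essentially the same way the paper defers that range to \cref{main:thm1-ill}. The problem is the ``hard regime,'' which is the actual content of the theorem. Your plan there --- take $\varphi=\fe^{iNx}$, go to high Picard iterates $w^{(j)}$, and ``tune $N$ and the indices $n_i$ to produce small resonance denominators'' --- is not a proof: you never exhibit a configuration of lacunary frequencies with small denominators, never estimate the competition between the $j$ factors of $\hat\xi_{k_{n_i}}$ and the $j$ denominators, and never check that the divergent contribution survives summation over $\gamma$-weights. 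With lacunary $k_n=2^n$ the sums $N+\sum\sigma_i k_{n_i}$ do not cluster near $\pm N$, so the small-denominator mechanism you invoke is not available in any obvious way. Also, with a fixed $\alpha$ your easy regime covers only $\gamma\ge 2-\alpha$, not all $\gamma>2+s$ (you cannot send $\alpha\to -s$ after $\xi$ has been fixed), so the window $(2+s,2-\alpha)$ falls between your two cases as written.

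The missing idea --- and the one the paper uses --- is that no small denominators or high iterates are needed: the \emph{exactly resonant} part of the first $\xi$-iterate, namely the contribution of $\phi(k,k_2)=k^2-k_2^2=0$ with $k_1=2k$, $k_2=-k$, produces the term $it\,\hat\xi_{2k}\,\widehat{(u_0)}_{-k}$ (see \eqref{Form-A2} and \eqref{A2-u0-1}). Taking data concentrated at frequency $N$ and normalized in $H^\gamma$ (the paper uses $\epsilon\sum_{|k-N|\le 10}N^{-\gamma}\fe^{ikx}$), the output sits at frequency $-N$ with the \emph{same} $H^\gamma$ normalization, so the $\gamma$-weights cancel and the norm ratio is $\sim t\,|\hat\xi_{2N}|\to\infty$ for every $\gamma$ simultaneously; the nonresonant and cubic terms are then shown to be lower order (estimates \eqref{est:A2-u0-2}, \eqref{est:A2-u0-3}). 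Your own $\xi$ would support exactly this argument, since $2k_m=k_{m+1}$ lies in its Fourier support and $\hat\xi_{2^{m+1}}=\tfrac12 2^{\alpha(m+1)}\to\infty$, but you never identify this term. Finally, your reduction to ``continuity of the linear flow'' by letting $\epsilon\downarrow 0$ is looser than what is needed: discontinuity of the solution map in the sense of \cref{def2} is extracted via the quantitative well-posedness machinery of \cref{lm:ill-tool} applied to the Picard coefficients, not by a pointwise limit of $u[\epsilon\varphi]/\epsilon$, whose very existence in $H^\gamma$ is part of what is in question.
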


\subsection{Numerical counterpart}
We are particularly interested in understanding the numerical issues for solving the NLS equation (\ref{model}) for the rough potential case $\xi\in \hat b^{s,p}$. Traditional numerical methods could be completely  inaccurate due to the roughness, and so the computational results are not reliable. Hence, the first goal would be a correct and accurate scheme that can work under the roughest case. Here we propose a simple and effective exponential type integration scheme  which will be derived later in the spirit of a low-regularity integrator (LRI) \cite{lownls}.

Let $\tau=\Delta t>0$ be the time step with the time grid points $t_n=n\tau$ for $n=0,1\ldots,$ and denote $u^n(x)\approx u(t_n,x)$ for the numerical solution. Our \emph{LRI scheme} reads as follows: start with the exact initial value $u^0(x)=u_0(x)$ and then update as
\begin{align}\label{NuSo-NLS}
u^{n+1}(x)=i\tau  \mathcal D_{\tau}\left[\xi(x)\>
\mathcal D_\tau [u^n(x)]\right]
+\mathcal N_\tau\left[\fe^{i\tau\partial_x^2} u^n(x)\right],\quad n=0,1,\ldots,\quad x\in\T,
\end{align}
 where the two operators $\mathcal D_\tau$ and $\mathcal N_\tau$ are defined by
\begin{equation}\label{Dtau-def}
\mathcal D_\tau [f]\triangleq  \left(\fe^{i\tau \partial_x^2}-1\right)\big(i\tau \partial_x^2\big
)^{-1}f,\qquad \mathcal N_\tau[f]\triangleq \fe^{-i\tau\lambda |P_{\leq N} f|^2}f,
\end{equation}
with $P_{\le N} f \triangleq  \sum_{|k|\le N}\hat f_k \fe^{ikx}$ ``cutting off high modes", and with {$N=\tau^{-\frac12+\eps_0}$ for any $0<\eps_0\leq\frac{1}{8(s+\gamma_p)}$}. Here $\hat f_k$ denotes the Fourier coefficient of $f=f(x)$ for $x\in\T,$ and $P_{\le N}$ is referred as the filter operator in \cite{Ignat,Zuazua}.
The derivation of the scheme (\ref{NuSo-NLS}) will be given in \cref{sec: num method}. Clearly, \eqref{NuSo-NLS} is explicit in time, and thanks to the period boundary conditions that we chose, it can be efficiently implemented under Fourier spectral discretization \cite{Trefethen} using FFT (fast Fourier transforms).  In practice, when the number of spatial grid points which is also the number of total frequencies is greater than or equal to $2N$, the filter $P_{\le N}$ becomes trivial for the implementation.
Moreover, such a filter primarily serves for theoretical results, and in practice the scheme performs very similarly without it.

For the semi-discretized (in time) LRI scheme (\ref{NuSo-NLS}),
the following theorem gives its error estimate up to a fixed time of computation.
\begin{theorem}[\bf{Error estimate for the numerical method}]\label{thm:main-N1}
Let $\xi\in \hat b^{s,p}$ with $s\ge0, 2\le p\le +\infty$ in \eqref{model}.  For any $u_0\in H^{s+2}(\T)$ and any $0<T<T_{\mathrm{max}}$, there exist constants $C>0,\tau_0>0$ such that for $0<\tau\leq\tau_0$, the numerical solution given by \eqref{NuSo-NLS}  satisfies
\begin{equation}\label{error-tau-1}
\max_{0\le t_n\le T}  \|u(t_n)-u^{n}\|_{L^2}
  \le C\tau^{\min\left\{s+\frac1p+\frac14-,1\right\}},
\end{equation}
where $\tau_0$ and $C$ depend only on $T$, $\lambda$, $\|\xi\|_{\hat b^{s,p}}$ and $\|u_0\|_{H^{s+2}}$.
\end{theorem}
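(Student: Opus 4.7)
I will follow the standard consistency--stability paradigm for one-step temporal schemes; the core novelty will be extracting the fractional rate $\tau^{s+1/p+1/4-}$ from the rough potential term via an oscillatory-integral analysis on the Fourier side. The plan splits into four steps: (i) propagation of $H^{s+2}$ regularity for the exact solution, obtained by bootstrapping from Theorems~\ref{main:thm1}/\ref{main:thm3-smoothdata} using the Duhamel identity, since the double-$\mathcal D_\tau$ structure present in the scheme already reflects the gain of two derivatives from the potential integral when $u_0\in H^{s+2}$; (ii) decomposition of the local truncation error; (iii) the potential estimate, which will be the main obstacle; (iv) nonlinear error control, $L^2$-stability of $\Phi_\tau$, and Gronwall summation.

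\emph{Local error decomposition.} By Duhamel,
\begin{equation*}
u(t_{n+1})=\fe^{i\tau\partial_x^2}u(t_n)+i\int_0^\tau \fe^{i(\tau-s)\partial_x^2}\bigl[\xi\,u(t_n+s)-\lambda|u|^2u(t_n+s)\bigr]ds.
\end{equation*}
The identity $\mathcal D_\tau f=\tau^{-1}\int_0^\tau \fe^{is\partial_x^2}f\,ds$ rewrites the potential piece of \eqref{NuSo-NLS} as $i\tau\mathcal D_\tau[\xi\mathcal D_\tau u^n]=i\int_0^\tau \fe^{i(\tau-s)\partial_x^2}[\xi\mathcal D_\tau u^n]\,ds$. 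Writing $u(t_n+s)=\fe^{is\partial_x^2}u(t_n)+r_n(s)$ with $\|r_n(s)\|_{L^2}\lesssim s$, I would decompose the local error $\mathcal R_n:=u(t_{n+1})-\Phi_\tau(u(t_n))$ into a principal potential contribution
\begin{equation*}
\mathcal J_n:=i\int_0^\tau \fe^{i(\tau-s)\partial_x^2}\bigl[\xi\bigl(\fe^{is\partial_x^2}u(t_n)-\mathcal D_\tau u(t_n)\bigr)\bigr]\,ds,
\end{equation*}
a nonlinear piece, and harmless $O(\tau^2)$ remainders involving $r_n$. Inserting the Fourier expansions $\xi=\sum_k\hat\xi_k\fe^{ikx}$ and $u(t_n)=\sum_l\hat u_l\fe^{ilx}$ gives $\mathcal J_n=i\sum_{k,l}\hat\xi_k\hat u_l A_{k,l}\fe^{i(k+l)x}$ with the explicit phase integral $A_{k,l}=\int_0^\tau \fe^{-i(\tau-s)(k+l)^2}\bigl(\fe^{-isl^2}-\frac{1-\fe^{-i\tau l^2}}{i\tau l^2}\bigr)ds$.

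\emph{Main estimate.} The hard part will be to combine the Taylor bound $|A_{k,l}|\lesssim \tau\min(1,\tau l^2)$ with the oscillatory bound $|A_{k,l}|\lesssim |k(k+2l)|^{-1}+(\tau l^2(k+l)^2)^{-1}$, obtained after evaluating the inner $s$-integral (both being valid off the resonant set $\{k(k+2l)=0\}$, on which $A_{k,l}$ either vanishes or is computed directly), and then to interpolate in a way compatible with the $\hat b^{s,p}$-summability of $\hat\xi_k$ and the $\ell^2$-summability of $|l|^{s+2}\hat u_l$. Parameterizing the interpolation by $\theta\in[0,1]$ and applying H\"older in $\ell^p_k\times\ell^{p'}_l$ for the double sum, then optimizing over $\theta$, should yield
\begin{equation*}
\|\mathcal J_n\|_{L^2}\lesssim \tau^{1+\min\{s+1/p+1/4-,\,1\}}\|\xi\|_{\hat b^{s,p}}\|u(t_n)\|_{H^{s+2}}.
\end{equation*}
The exponent $1/4$ arises from balancing the $|l|^2$ loss of the Taylor bound against the Schr\"odinger dispersion gain in $(k+l)^2$; the $1/p$ is the H\"older cost of summing $|\hat\xi_k|$.

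\emph{Nonlinear error, stability, conclusion.} For the nonlinear piece, second-order Taylor expansion of the exponential in $\mathcal N_\tau$ produces the Duhamel nonlinear integral modulo $O(\tau^2\|u(t_n)\|_{H^{s+2}}^3)$; the filter cutoff $P_{\leq N}$ with $N=\tau^{-1/2+\eps_0}$ loses $N^{-(s+2)}\leq \tau^{(s+2)(1/2-\eps_0)}$, which is $O(\tau^2)$ under the standing bound $\eps_0\le \tfrac{1}{8(s+\gamma_p)}$. Stability of $\Phi_\tau$ in $L^2$ with Lipschitz constant $1+O(\tau)$ on $H^{s+2}$-bounded balls will follow from the unitarity of $\fe^{i\tau\partial_x^2}$, the uniform $L^2$-boundedness of $\mathcal D_\tau$, the Lipschitz character of $\mathcal N_\tau$, and the crucial observation that the outer $\mathcal D_\tau$ in $\mathcal D_\tau[\xi\mathcal D_\tau\cdot]$ supplies exactly enough smoothing to tame the merely $\hat b^{s,p}$-regular multiplier $\xi$ (which need not act on $L^2$). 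Combining $\|\mathcal R_n\|_{L^2}\lesssim\tau^{1+\min\{s+1/p+1/4-,1\}}$ with stability and discrete Gronwall then delivers \eqref{error-tau-1}. The dominant difficulty throughout is the sharp interpolation underlying the bound on $\mathcal J_n$; the filter, largely inessential in practice, is the device that keeps the nonlinear analysis purely $L^2$ at low regularity.
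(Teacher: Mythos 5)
Your proposal contains genuine gaps, the most serious being your step (i). The exact solution does \emph{not} propagate $H^{s+2}$ regularity: by \cref{main:thm1} the solution lives only in $H^{s+\gamma_p-}$ with $\gamma_p=\frac32+\frac1p<2$ for $p>2$, and \cref{main:thm1-ill} shows this ceiling is attained even for smooth data (the paper's proof uses precisely $\|u\|_{L^\infty_tH^{s+\gamma_p-}}\le 2\|u_0\|_{H^{s+2}}$). So your main estimate, stated in terms of $\|u(t_n)\|_{H^{s+2}}$, rests on an unbounded quantity; worse, if $H^{s+2}$ did propagate, your own interpolation for $A_{k,l}$ would deliver a rate strictly better than $\tau^{s+1/p+1/4}$, so the $\tfrac14$ cannot come only from ``balancing the Taylor bound against the dispersion gain'' — it encodes the regularity ceiling $s+\gamma_p$. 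Relatedly, your ``harmless $O(\tau^2)$ remainders involving $r_n$'' are not harmless: the term $i\int_0^\tau\fe^{i(\tau-s)\partial_x^2}[\xi\,r_n(s)]\,ds$ cannot be bounded by $\tau\sup_s\|\xi r_n(s)\|_{L^2}$, because $\xi\in\hat b^{s,p}$ (e.g.\ $\xi=\delta$) is not an $L^2$-multiplier and an $L^2$ bound on $r_n$ gives no control of $\xi r_n$. In the paper this is the term $R_1^n$, and it is rate-determining: controlling it requires a quantitative H\"older-in-time estimate $\|v(t)-v(t_n)\|_{H^\beta}\lesssim\tau\max\{\tau^{-\frac12(\beta-s-\frac1p)-\frac14-},1\}$ for the twisted variable (\cref{lem:v-s-tn}), followed by a frequency decomposition $|k_1|\lesssim|k_2|$ vs.\ $|k_1|\gg|k_2|$ and an integration by parts in time (\cref{lem:est-Rn1}), yielding exactly $O(\tau^{1+\min\{s+\frac1p+\frac14-,1\}})$. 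Your $\mathcal J_n$ does correctly identify the second rate-determining term (the paper's $R_2^n$, handled via \cref{lem:average2}), but it is only half of the consistency analysis.

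A further gap is in the stability step. The bound one can prove is $\tau\|\mathcal D_\tau[\xi\,\mathcal D_\tau\tilde h]\|_{L^2}\lesssim\sqrt{\tau}\,\|\xi\|_{\hat l^\infty}\|h\|_{L^2}$, so ``the outer $\mathcal D_\tau$ supplies enough smoothing'' only yields a Lipschitz constant $1+O(\sqrt\tau)$, which Gronwall turns into $\fe^{C/\sqrt\tau}$ over $T/\tau$ steps. The paper's Lipschitz constant $1+O(\tau)$ relies on the exact cancellation
\begin{equation*}
\Big\langle \tilde h,\ i\tau\,\mathcal D_{-\tau}\big[\xi\,\mathcal D_\tau\tilde h\big]\Big\rangle=\tau\,\mathrm{Im}\int_\T\xi\,|\mathcal D_\tau\tilde h|^2\,dx=0
\end{equation*}
for real-valued $\xi$, so that the $O(\sqrt\tau)$ term only enters squared. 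Without identifying this inner-product structure your Gronwall summation does not close. To repair the proposal you would need to (1) replace $H^{s+2}$ by $H^{s+\gamma_p-}$ throughout and rerun the interpolation for $A_{k,l}$ against that regularity, (2) prove the time-regularity lemma for $v$ and carry out the full analysis of the $\xi r_n$ term, and (3) invoke the cancellation above in the stability estimate.
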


As illustrated in the above result, for the ``worst'' potential case $\xi\in \hat{l}^\infty$, the scheme (\ref{NuSo-NLS}) has $\mathcal{O}(\tau^{\frac14-})$ accuracy which is indeed quite low as a convergence rate but at least can guarantee the  convergence and correctness of the computational results. The convergence order (\ref{error-tau-1}) will be verified by numerical experiments. In addition, numerical comparisons will highlight that the accuracy of (\ref{NuSo-NLS}) is indeed better than the existing schemes for (\ref{model}). To achieve a higher order accurate scheme would be an extremely challenging future task. With the reliable numerical results provided by the LRI scheme (\ref{NuSo-NLS}), we will be able to verify the theoretical  results given in \cref{sec2:well-posed}. The predicted regularity in the theorems can be exactly observed in our numerical results (see \cref{fig:thm1}-\cref{fig:thm4}).

The main difficulty and innovation of the mathematical analysis in the work is summarized here. For the well-posedness and ill-posedness theorems, the low regularity of the considered $\xi$, for instance for $\xi\in L^1$, makes the usual H\"older type estimate for the product $\xi u$ no longer available. Delicate frequency analysis has to be developed here. For the numerical analysis, the roughness limits the measure of error under the $L^2$-norm (\ref{error-tau-1}), where the algebraic property is missing. The established $L^2$-estimate here essentially benefits from the design of the approximation part $\mathcal D_{\tau}\left[\xi\>
\mathcal D_\tau [u^n]\right]$ in the scheme (\ref{NuSo-NLS}), which has a crucial inner product structure.

The rest of our paper is organized as follows. In  \cref{sec: pre}, we introduce some preliminaries for the following analysis. In \cref{sec: well}, we prove the well-posedness results for the NLS equation (\ref{model}). In \cref{sec: ill}, we prove the ill-posedness results for (\ref{model}). The numerical scheme and convergence analysis are given in \cref{sec: num method}. Numerical experiments are done in \cref{sec: num result} and conclusions are drawn in \cref{sec: con}.

\vskip1cm

 \section{Preliminaries}\label{sec: pre}
As a preparation for subsequent PDE and numerical analysis, we give some notations and lemmas that will be frequently used in the rest of the paper.

\subsection{Basic notations}\label{sec2:subsec1}
We adopt the following widely used notations from harmonic analysis and partial differential equations \cite{Bo,Taobook}:
\begin{enumerate}

\item[(i)]
For a function $f(t,x)$ which depends on $t$ and $x$, we simply denote $f(t)=f(t,\cdot)$.

\item[(ii)]
Denote $\langle k\rangle=(1+|k|^2)^{\frac12}$ for $k\in\Z$.

\item[(iii)]
The notation $a+$ stands for $a+\epsilon$ with an arbitrary small $\epsilon>0$,
and $a-$ stands for $a-\epsilon$.

\item[(iv)]
Denote  $\gamma_p=\frac32+\frac1p$ for some given $p\geq1$.

\item[(v)] For a sequence $\{a_N\}_{N\in\bN}$, denote the discrete norm as $\|a_N\|_{l^p_N}=(\sum_{N=1}^\infty |a_N|^p)^{\frac1p}$, and sometimes we will omit the subscript $N$ in $l^p_N$ for brevity.

\item[(vi)]
Denote by $C$ a generic positive constant which may has different values at different occurrences, possibly depending on the norms of the solution and $T$ but independent of the step size $\tau$ and time level $n$ in numerical analysis.

\item[(vii)]
Denote by $A\lesssim B$ or $B\gtrsim A$ the statement ``$A\leq CB$ for some constant $C>0$''.

\item[(viii)]
Denote by $A\sim B$ the statement ``$C^{-1}B\le A\leq CB$ for some constant $C>0$''. Namely, $A\sim B$ is equivalently to $A\lesssim B\lesssim A$. Moreover,
denote by $A\ll B$ or $B\gg A$ the statement $A\le C^{-1}B$ for some sufficiently large constant $C$.


\end{enumerate}

With the notations above, we often decompose a subset $E\subset \Z^2=\{(k_1,k_2):k_1,k_2\in\Z\}$ into two parts, i.e., $E=E_1\cup E_2$, with
$$
E_1=\{(k_1,k_2)\in E: |k_1|\ll |k_2|\}
\quad\mbox{and}\quad
E_2=\{(k_1,k_2)\in E: |k_1|\gtrsim |k_2|\} .
$$
This means that we consider the decomposition with
$$
E_1=\{(k_1,k_2)\in E: |k_1|< c|k_2|\}
\quad\mbox{and}\quad
E_2=\{(k_1,k_2)\in E: |k_1|\ge c|k_2|\} ,
$$
where $c>0$ is some sufficiently small constant (independent of $\tau$ and $n$) which can satisfy the requirement in our analysis.

\subsection{Fourier transform}\label{subsec1}
The inner product and the norm of $L^2(\T)$ are defined by
$$
\langle f,g\rangle \triangleq \mathrm{Re} \int_\T f(x)\overline{g(x)}\,d x
\quad\mbox{and}\quad
\|f\|_{L^2(\T)}\triangleq \sqrt{\langle f,f\rangle} .
$$
The Fourier transform of a function $f\in L^2(\T)$ is defined by
$$
\mathcal{F}_k[f] \triangleq  \frac1{2\pi}\displaystyle\int_{\T}
\fe^{- i kx}f( x)\,d x,\quad k\in\Z.
$$
For the simplicity of notation, we also denote $\hat{f}_k=\mathcal{F}_k[f]$ and $f=\mathcal{F}_k^{-1}[\hat f_k]$. The following standard properties of the Fourier transform are well known:
\begin{align*}
&\mbox{(Fourier series expansion)}&&f(x)=\sum\limits_{k\in\Z}\hat{f}_k \fe^{i kx}; \\
&\mbox{(Plancherel's identity)} &&\|f\|_{L^2(\T)}= \sqrt{2\pi}\Big(\sum\limits_{k\in\Z}|\hat f_k|^2 \Big)^\frac12; \\
&\mbox{(Parseval's identity)} &&\langle f,g\rangle  =2\pi \mathrm{Re}\sum\limits_{k\in\Z} \hat f_k \overline{\hat g_k}; \\
&\mbox{(Conversion of products to convolutions)} &&\mathcal{F}_k[fg]=\sum\limits_{k_1+k_2=k}\hat f_{k_1}\hat g_{k_2}.
\end{align*}

The Sobolev space $H^s(\T)$ with some $s\in\R$, consists of generalized functions $f=\sum\limits_{k\in\Z}\hat{f}_k \fe^{i kx} $  such that $\|f\|_{H^s}<\infty$, where
$$
\|f\|_{H^s} \triangleq   \sqrt{2\pi}\bigg(\sum_{k\in\Z} \langle k\rangle^{2s} |\hat f_k|^2 \bigg)^{\frac12}.
$$
The operator $J^s=(1-\partial_{x}^2)^\frac s2: H^{s_0}(\T)\rightarrow H^{s_0-s}(\T)$ with $s_0,s\in\R$, is defined as
$$
J^s f \triangleq  \sum_{k\in\Z} \langle k\rangle^{s} \hat f_k \fe^{ikx}, \quad\forall\, f\in H^{s_0}(\T) ,
$$
and so we have $\|f\|_{H^s(\T)} = \|J^s f \|_{L^2(\T)} $.

We often use the abbreviations $H^s(\T)=H^s$, $L^p(\T)=L^p$ and $l^p_k=l^p_{k\in\Z}$.  Moreover, we use the similar abbreviations for the spacetime norms, like $L^q_t H^{s}_x(I\times \T)=L^q_t H^{s}_x(I)$.

\subsection{Some operators}
\label{section:projection}
Some frequently used operators are defined as follows.
\begin{enumerate}

\item[(i)] Projection: for any real number $N\ge 0$,  the Littlewood--Paley projections $P_{\le N}: H^s(\T)\rightarrow H^s(\T)$ and $P_{> N}: H^s(\T)\rightarrow H^s(\T)$ are defined as
\begin{align*}
P_{\le N} f \triangleq  \mathcal{F}^{-1}_k\big( 1_{|k|\le N} \mathcal{F}_k[f] \big) = \sum_{|k|\le N}\hat f_k \fe^{ikx} ,\quad
P_{> N} f \triangleq  \mathcal{F}^{-1}_k\big( 1_{|k|> N} \mathcal{F}_k[f] \big) = \sum_{|k|> N}\hat f_k \fe^{ikx} .
\end{align*}
  \item[(ii)] The inversion of differentiation: $\partial_x^{-1}: H^s(\T)\rightarrow H^{s+1}(\T)$ is defined by
\begin{equation*}
\mathcal{F}_k[\partial_x^{-1}f]
\triangleq \Bigg\{ \aligned
    &(ik)^{-1}\hat f_k,  &&\mbox{for}\,\,\, k\ne 0,\\
    &0, &&\mbox{for}\,\,\, k= 0.
   \endaligned
\end{equation*}
  \item[(iii)] Average: define the {\it average}   of a time-dependent function $f(t)$ in the interval $[0,\tau]$ by
\begin{equation}
\mathcal M_\tau(f)\triangleq \frac1\tau \int_0^\tau f(t)\,d t.\label{time average operator}
\end{equation}
\end{enumerate}



%

\subsection{Some tool lemmas}

 \begin{lemma}[\cite{LiWu-2022}]\label{lem:average2}
Let $\alpha,\beta\in \R$. If $\alpha, \beta\ne 0$ and $s\in[0,\tau]$, then
\begin{align*}
\big|\mathcal M_\tau\big(\fe^{is(\alpha+\beta)}\big)-\mathcal M_\tau\big(\fe^{is\alpha}\big)\mathcal M_\tau\big(\fe^{is\beta}\big)\big|\lesssim  \min\left\{\left|\frac{\alpha}{\beta}\right|,\left|\frac{\beta}{\alpha}\right|,\tau|\alpha|,\tau|\beta|\right\}.
\end{align*}
If $\alpha+\beta\ne 0$, then
\begin{align*}
\big|\mathcal M_\tau\big(\fe^{is(\alpha+\beta)}\big)-\mathcal M_\tau\big(\fe^{is\alpha}\big)\mathcal M_\tau\big(\fe^{is\beta}\big)\big|\lesssim  \tau^{-1}|\alpha+\beta|^{-1}.
\end{align*}
\end{lemma}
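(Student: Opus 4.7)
The plan is to write $\phi(\gamma):=\mathcal M_\tau(\fe^{is\gamma})=\frac{\fe^{i\tau\gamma}-1}{i\tau\gamma}$ for $\gamma\ne 0$ (with $\phi(0):=1$) and exploit the three elementary bounds $|\phi(\gamma)|\le 1$, $|\phi(\gamma)|\le 2/(\tau|\gamma|)$, and $|\phi(\gamma)-1|\le \tau|\gamma|/2$, the last coming from $|\fe^{is\gamma}-1|\le s|\gamma|$. I would handle the four quantities inside the first $\min$ separately, and the second claim follows from a short triangle-inequality argument.

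For the bound $\tau|\alpha|$ (the $\tau|\beta|$ case being symmetric), I would use the decomposition
\[
\phi(\alpha+\beta)-\phi(\alpha)\phi(\beta)=\mathcal M_\tau\bigl((\fe^{is\alpha}-1)\fe^{is\beta}\bigr)-\bigl(\phi(\alpha)-1\bigr)\phi(\beta),
\]
which uses the trivial cancellation $\mathcal M_\tau(1\cdot\fe^{is\beta})=1\cdot\phi(\beta)$. Each summand is then controlled by $\tau|\alpha|$ from $|\fe^{is\alpha}-1|\le s|\alpha|\le\tau|\alpha|$ combined with $|\phi(\beta)|\le 1$.

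For the bound $|\alpha/\beta|$ (and symmetrically $|\beta/\alpha|$), assume $|\alpha|\le|\beta|$. The regime $|\alpha|\gtrsim|\beta|$ is trivial since the right-hand side is then $\gtrsim 1$ while the left-hand side is always $\le 2$, so assume $|\alpha|\ll|\beta|$, which forces $|\alpha+\beta|\sim|\beta|$. I would then compare both sides to the common reference $\phi(\beta)$. The easier comparison is $\phi(\alpha)\phi(\beta)-\phi(\beta)=(\phi(\alpha)-1)\phi(\beta)$, bounded by $\tau|\alpha|\cdot 2/(\tau|\beta|)\lesssim|\alpha|/|\beta|$. For the other, writing $A=\fe^{i\tau\alpha}$, $B=\fe^{i\tau\beta}$ and using $AB-1=(A-1)B+(B-1)$ leads to the identity
\[
\phi(\alpha+\beta)-\phi(\beta)=\frac{(A-1)B}{i\tau(\alpha+\beta)}-\frac{(B-1)\alpha}{i\tau\beta(\alpha+\beta)}.
\]
The first term is $\lesssim|A-1|/(\tau|\alpha+\beta|)\lesssim|\alpha|/|\beta|$ from $|A-1|\le\tau|\alpha|$. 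The second is trickier: the factor $|\alpha|/(\tau|\beta|^2)$ must be paired either with $|B-1|\le\tau|\beta|$ (when $\tau|\beta|\le 1$) or with $|B-1|\le 2$ (when $\tau|\beta|\ge 1$) to recover the desired $|\alpha|/|\beta|$.

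For the second inequality, the key observation is that $|\alpha+\beta|\le|\alpha|+|\beta|$ forces $\max(|\alpha|,|\beta|)\ge|\alpha+\beta|/2$, so WLOG $|\phi(\alpha)|\le 2/(\tau|\alpha|)\le 4/(\tau|\alpha+\beta|)$; combined with $|\phi(\beta)|\le 1$ and $|\phi(\alpha+\beta)|\le 2/(\tau|\alpha+\beta|)$, the triangle inequality closes the argument. I expect the main obstacle to be the $|\alpha/\beta|$ case in the intermediate regime $\tau|\alpha|\ll 1\ll\tau|\beta|$, where one has to delicately interpolate between the Taylor estimate for $\fe^{i\tau\gamma}-1$ and the oscillatory estimate $|\phi(\gamma)|\lesssim 1/(\tau|\gamma|)$ so as not to lose the gain from the small-frequency variable $\alpha$.
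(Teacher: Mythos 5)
The paper does not prove this lemma — it is imported verbatim from \cite{LiWu-2022} — so there is no internal proof to compare against; your argument is correct and self-contained, and it follows the standard route one would expect (explicit formula $\mathcal M_\tau(\fe^{is\gamma})=\frac{\fe^{i\tau\gamma}-1}{i\tau\gamma}$ plus the three elementary bounds $|\phi|\le 1$, $|\phi(\gamma)|\le 2/(\tau|\gamma|)$, $|\phi(\gamma)-1|\le\tau|\gamma|/2$). All the delicate points check out: the decomposition for the $\tau|\alpha|$ bound is an exact identity, the regime split $|\alpha|\gtrsim|\beta|$ versus $|\alpha|\ll|\beta|$ (where $|\alpha+\beta|\sim|\beta|$, so the division by $\alpha+\beta$ is legitimate) covers the $|\alpha/\beta|$ bound, and your case split $\tau|\beta|\lessgtr 1$ already disposes of the "intermediate regime" you worry about at the end — since $\min\{2,\tau|\beta|\}/(\tau|\beta|)\le 1$, no further interpolation is needed.
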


 \begin{lemma}[Schur's test]\label{lem:schurtest}
	For any $a>0$, let sequences $\{a_N\},\,\{b_N\}\in l_{N\in2^\N}^2$, then we have
	\begin{align*}
		\sum_{N_1\le  N} \left(\frac{N_1}{N}\right)^a a_N \>b_{N_1} \lesssim  \big\|a_N\big\|_{l_N^2} \big\|b_N\big\|_{l_N^2}.
	\end{align*}
\end{lemma}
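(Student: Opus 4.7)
The plan is to read the left-hand side as a bilinear pairing $B(a,b) = \sum_{N,N_1 \in 2^{\N}} K(N,N_1)\, a_N\, b_{N_1}$ with kernel $K(N,N_1) = (N_1/N)^a \mathbf{1}_{N_1 \le N}$, and to apply Cauchy--Schwarz in the form of the classical Schur test.

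First, I would split the weight symmetrically as $(N_1/N)^a = (N_1/N)^{a/2}\cdot(N_1/N)^{a/2}$ and apply Cauchy--Schwarz to the double sum, obtaining
\begin{align*}
\sum_{N_1 \le N} \Bigl(\frac{N_1}{N}\Bigr)^a a_N\, b_{N_1}
\le \biggl(\sum_N a_N^2 \sum_{N_1 \le N} \Bigl(\frac{N_1}{N}\Bigr)^a\biggr)^{1/2}
   \biggl(\sum_{N_1} b_{N_1}^2 \sum_{N \ge N_1} \Bigl(\frac{N_1}{N}\Bigr)^a\biggr)^{1/2}.
\end{align*}
Next, I would evaluate the two inner sums via the dyadic substitution $N=2^k$, $N_1=2^j$: each reduces to a geometric series $\sum_{m \ge 0} 2^{-am} = (1-2^{-a})^{-1}$, which is finite precisely because $a>0$. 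Plugging these uniform bounds back into the Cauchy--Schwarz estimate yields the stated inequality, with implicit constant $(1-2^{-a})^{-1}$.

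The argument presents no genuine obstacle. The only real choice is to split the weight symmetrically so that both marginal sums are finite simultaneously; positivity of $a$ is exactly what makes this possible, and the same computation shows that the result fails (with a logarithmic loss) at the endpoint $a=0$.
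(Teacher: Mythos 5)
Your proof is correct: the symmetric splitting of the weight into two factors of $(N_1/N)^{a/2}$, followed by Cauchy--Schwarz over the index set $\{N_1\le N\}$ and summation of the resulting geometric series $\sum_{m\ge 0}2^{-am}=(1-2^{-a})^{-1}$, is the standard proof of this dyadic Schur test, and the paper itself states the lemma without proof, so there is nothing to compare against. The only cosmetic point is that $a_N$, $b_{N_1}$ need not be nonnegative, so one should insert absolute values (i.e.\ bound the left-hand side by $\sum_{N_1\le N}(N_1/N)^a|a_N||b_{N_1}|$ before applying Cauchy--Schwarz, and write $|a_N|^2$, $|b_{N_1}|^2$ in the marginal sums); with that adjustment the argument is complete, with constant $(1-2^{-a})^{-1}$ as you say.
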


\begin{lemma}[\cite{Tao-illpose}]\label{lm:ill-tool}
Consider a {quantitatively well posed} abstract equation in spaces $D$ and $S$,
\begin{equation*}
u=L(f)+N_k(u,\ldots,u),
\end{equation*}
which means for all $f\in D,\ u_1,\ldots,u_k\in S$ and some constant $C>0$,
$$\|L(f)\|_{S}\leq C\|f\|_D,\quad \|N_k(u_1,\ldots,u_k)\|_S\leq C\|u_1\|_S\ldots\|u_k\|_S.$$
Here $(D,\|\|_{D})$ is a Banach space of initial data and $(S,\|\|_S)$ is a Banach space of spacetime functions.
Define
\begin{align*}
 A_1(f)\triangleq L(f),\quad A_n(f)\triangleq \sum_{n_1,\ldots,n_k\geq1,n_1+\ldots+n_k=n}N_k(A_{n_1}(f),\ldots,A_{n_k}(f)),\ n>1.
\end{align*}
 Then for some $C_1>0$, all $f,g\in D$ and all $n\geq1$,
$$\|A_n(f)-A_n(g)\|_S\leq  C_1^n\left(\|f\|_D+\|g\|_{D}\right)^{n-1}\|f-g\|_D.$$
\end{lemma}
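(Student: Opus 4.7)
The natural approach is strong induction on $n$, carrying along the auxiliary growth estimate $\|A_n(f)\|_S\leq B_n\|f\|_D^n$ where $B_1=C$ and $B_n = C\sum_{n_1+\cdots+n_k=n,\,n_i\geq1}\prod_iB_{n_i}$ for $n>1$. The base case $n=1$ is immediate from the boundedness of the linear map $L$, which directly gives both $\|A_1(f)\|_S\leq C\|f\|_D$ and $\|A_1(f)-A_1(g)\|_S\leq C\|f-g\|_D$. For the growth bound at step $n$, the $k$-linear estimate on $N_k$ applied termwise in the defining sum for $A_n$ reproduces exactly the recursion for $B_n$, whose generating function $B(z)=\sum_{n\geq1}B_nz^n$ satisfies the algebraic equation $B(z)=Cz+CB(z)^k$. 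By the implicit function theorem this equation admits a unique analytic solution near the origin, hence $B_n\leq C_2^n$ for some $C_2\geq C$.

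For the Lipschitz difference bound, the crucial step is the $k$-linear telescoping identity
\begin{equation*}
N_k\bigl(A_{n_1}(f),\ldots,A_{n_k}(f)\bigr)-N_k\bigl(A_{n_1}(g),\ldots,A_{n_k}(g)\bigr)=\sum_{j=1}^k N_k\bigl(A_{n_1}(f),\ldots,A_{n_j}(f)-A_{n_j}(g),\ldots,A_{n_k}(g)\bigr),
\end{equation*}
where the first $j-1$ slots carry $f$-arguments and the last $k-j$ slots carry $g$-arguments. Applying the $k$-linear bound, controlling the frozen slots via the growth estimate and the active slot via the inductive hypothesis, one obtains a scalar recursion of the form $L_n\leq C\sum_{n_1+\cdots+n_k=n}\sum_{j=1}^k L_{n_j}\prod_{i\neq j}B_{n_i}$ for the sequence $L_n$ bounding $\|A_n(f)-A_n(g)\|_S/[(\|f\|_D+\|g\|_D)^{n-1}\|f-g\|_D]$. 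Its generating function therefore satisfies $L(z)\leq Cz+CkB(z)^{k-1}L(z)$, that is $L(z)\leq Cz/(1-CkB(z)^{k-1})$, which is again analytic near $z=0$ because $B(0)=0$; consequently $L_n\leq C_1^n$ for a suitable $C_1$, closing the induction.

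The main obstacle I anticipate is not any single inequality but the structural necessity of running the two inductions in tandem: the Lipschitz bound on $A_n$ cannot be iterated without prior control of the frozen factors $A_{n_i}(f)$ and $A_{n_i}(g)$ via the growth bound, whereas the growth bound itself requires the same combinatorial bookkeeping over compositions. A secondary subtlety is ensuring that $C_1$ and $C_2$ can be chosen independently of $n$; a direct term-by-term estimate would only give a crude bound of the form $Ck\binom{n-1}{k-1}\max(C_1,C_2)^n$, which grows by an unwanted factor $2^{n-1}$. The generating function argument sidesteps this by reducing both rates uniformly to the analyticity of solutions of algebraic equations at the origin, yielding clean exponential bounds in $n$.
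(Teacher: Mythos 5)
The paper does not prove this lemma at all --- it is imported verbatim from \cite{Tao-illpose} --- so there is no in-paper proof to compare against. Your proposal is a correct reconstruction of the standard argument from that reference: the tandem induction on the growth bound $\|A_n(f)\|_S\le B_n\|f\|_D^n$ and the Lipschitz bound, the multilinear telescoping of $N_k(A_{n_1}(f),\ldots)-N_k(A_{n_1}(g),\ldots)$, and the generating-function closure $B(z)=Cz+CB(z)^k$, $L(z)\le Cz/(1-CkB(z)^{k-1})$ are exactly what is needed; the only thing worth making explicit is that the telescoping step uses the (implicit in the lemma, and true in the application) hypothesis that $N_k$ is genuinely $k$-(anti)linear, not merely bounded by the product of norms. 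A minor quibble: the ``unwanted factor'' in the naive induction is the polynomial $\binom{n-1}{k-1}\sim n^{k-1}$ rather than $2^{n-1}$, but your point that the direct induction fails to close and the generating-function argument repairs it stands.
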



\vskip 1cm

\section{Proof for well-posedness theory}\label{sec: well}
In this section, we shall give the proofs for the two main well-posednesss results, i.e., \cref{main:thm1} and \cref{main:thm3-smoothdata}. Without loss of generality, we assume $\lambda=-1$ in the NLS model (\ref{model}) for simplicity of notations.

\subsection{Proof of \cref{main:thm1}}\label{sec:Thm1}
%

Let $T$ be a fixed time with $0<T<+\infty$ which will be determined later.
By Duhamel's formula, we have that for $0\le t \le T$,
\begin{align*}
u(t)=\fe^{i(t-t_0)\partial_x^2} u(t_0)+i\int_{t_0}^t\fe^{i(t-\rho)\partial_x^2}\Big(\xi u(\rho)+|u(\rho)|^2u(\rho)\Big)\,d\rho.
\end{align*}
Denote $v(t)=\fe^{-it\partial_x^2}u(t)$ and set $t_0=0$, then it reduces to
\begin{align}\label{Duhamel-1}
v(t)=u_0+i\int_0^t\fe^{-i\rho\partial_x^2}\Big(\xi u(\rho)+|u(\rho)|^2u(\rho)\Big)\,d\rho.
\end{align}

We denote the operator $\Phi$ by
\begin{align}\label{def:Phi}
\Phi(v)\triangleq u_0+i\int_0^t\fe^{-i\rho\partial_x^2}\Big(\xi u(\rho)+|u(\rho)|^2u(\rho)\Big)\,d\rho,
\end{align}
and
$$
R_0\triangleq \max\{2\big\|u_0\big\|_{H^{s+\gamma}_x}, 1\}.
$$
Then  we aim to show that for any $v\in L^\infty_tH^{s+\gamma}_x((0,T)\times\T)$ with
\begin{align}\label{assum}
\big\|v\big\|_{L^\infty_tH^{s+\gamma}_x}\le R_0,
\end{align}
it holds that
\begin{align}\label{est:bounded}
\big\|\Phi(v)\big\|_{L^\infty_tH^{s+\gamma}_x}\le & R_0.
\end{align}
Here and below, we use the abbreviation for the norms $L^\infty_tH^{s+\gamma}_x=L^\infty_tH^{s+\gamma}_x([0,T]\times \T)$.
Moreover, for any $v_1,v_2\in L^\infty_tH^{s+\gamma}_x((0,T)\times\T)$ with
$$
\big\|v_j\big\|_{L^\infty_tH^{s+\gamma}_x}\le R_0, \quad j=1,2,
$$
it holds that
\begin{align}\label{est:contraction}
\big\|\Phi(v_1)-\Phi(v_2)\big\|_{L^\infty_tH^{s+\gamma}_x}\le & \theta \big\|v_1-v_2\big\|_{L^\infty_tH^{s+\gamma}_x},
\quad \mbox{ for some }\theta \in (0,1).
\end{align}
The proof of \eqref{est:contraction} is similar as the proof of \eqref{est:bounded}, we only consider the latter.
Then it can be reduced to the following lemma.
\begin{lemma}\label{lem:Phi} Let $s\ge 0, 2<p\le +\infty, \gamma=\gamma_p-$ and denote  $\varepsilon_0=\min\{\frac12(\gamma_p-\gamma),\frac18\}$. Then  for any $v$ satisfying \eqref{assum}, and any $N_0>0$, there exists a function $C_0(N_0)>0$ which is dependent on $\|\xi\|_{\hat b^{s,p}}$ such that
\begin{align*}
\big\|\Phi(v)\big\|_{L^\infty_tH^{s+\gamma}_x}\le \frac12 R_0+ \Big[C_0(N_0)T+N_0^{-\frac{\varepsilon_0}2}\Big]\big(R_0+R_0^3\big).
\end{align*}
\end{lemma}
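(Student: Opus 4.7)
The plan is to split $\Phi(v)-u_0=\Phi_{\mathrm{lin}}(v)+\Phi_{\mathrm{cub}}(v)$ with
\begin{equation*}
\Phi_{\mathrm{lin}}(v)\triangleq i\int_0^t\fe^{-i\rho\partial_x^2}\bigl(\xi u(\rho)\bigr)\,d\rho,\qquad
\Phi_{\mathrm{cub}}(v)\triangleq i\int_0^t\fe^{-i\rho\partial_x^2}\bigl(|u(\rho)|^2u(\rho)\bigr)\,d\rho,
\end{equation*}
and to bound each piece in $L^\infty_tH^{s+\gamma}_x$. The $u_0$ contribution is exactly $\tfrac12 R_0$ by the choice of $R_0$. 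Since $s+\gamma>\gamma_p-\tfrac12>1$, $H^{s+\gamma}(\T)$ is a Banach algebra, so Minkowski in $t$ combined with the unitarity of $\fe^{i\rho\partial_x^2}$ on $H^{s+\gamma}$ immediately gives $\|\Phi_{\mathrm{cub}}(v)\|_{L^\infty_tH^{s+\gamma}_x}\lesssim T\|v\|_{L^\infty_tH^{s+\gamma}_x}^3\le TR_0^3$, which is absorbed into the $C_0(N_0)T(R_0+R_0^3)$ term.

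For $\Phi_{\mathrm{lin}}$ I would perform a frequency split $\xi=\xi_L+\xi_H$ at level $N_0$, with $\xi_L\triangleq P_{\le N_0}\xi$ and $\xi_H\triangleq P_{>N_0}\xi$, giving $\Phi_{\mathrm{lin}}=\Phi_{\mathrm{lin}}^L+\Phi_{\mathrm{lin}}^H$. For the low-frequency piece, H\"older in the finite $k$-sum (using $p\ge 2$) yields $\|\xi_L\|_{H^{s+\gamma}}\le N_0^{\gamma+\frac12-\frac1p}\|\xi\|_{\hat b^{s,p}}$, and after absorbing this factor into $C_0(N_0)$ the algebra property together with $\|u(\rho)\|_{H^{s+\gamma}}=\|v(\rho)\|_{H^{s+\gamma}}$ gives $\|\Phi_{\mathrm{lin}}^L(v)\|_{L^\infty_tH^{s+\gamma}_x}\lesssim C_0(N_0)TR_0$.

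The hard part is the high-frequency bound $\|\Phi_{\mathrm{lin}}^H(v)\|_{L^\infty_tH^{s+\gamma}_x}\lesssim N_0^{-\varepsilon_0/2}R_0$, where the decay factor must be extracted purely from the dispersion of the Schr\"odinger semigroup. Passing to Fourier via $\hat u_k(\rho)=\fe^{-i\rho k^2}\hat v_k(\rho)$ one obtains
\begin{equation*}
\mathcal F_k\bigl[\Phi_{\mathrm{lin}}^H(v)(t)\bigr]
=i\sum_{\substack{k_1+k_2=k\\ |k_1|>N_0}}\hat\xi_{k_1}\int_0^t\fe^{i\rho\,k_1(k_1+2k_2)}\hat v_{k_2}(\rho)\,d\rho,
\end{equation*}
so the oscillatory phase is $k_1(k_1+2k_2)$. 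The plan is then a Littlewood--Paley decomposition $\xi=\sum_{N_1}\xi_{N_1},\ v=\sum_{N_2}v_{N_2}$ combined with the dichotomy $|k_1|\ll|k_2|$ versus $|k_1|\gtrsim|k_2|$ from \cref{sec2:subsec1}, and a further isolation of the near-resonant slice $|k_1+2k_2|\lesssim 1$. The near-resonant slice is a one-parameter family ($k_2$ determined by $k_1$ up to $O(1)$ ambiguity) and is controlled by a direct Cauchy--Schwarz using the decay of $\hat\xi$ on $|k_1|>N_0$. On the non-resonant region one performs a normal-form integration by parts in $\rho$ to extract $|k_1(k_1+2k_2)|^{-1}\lesssim N_0^{-1}|k_1+2k_2|^{-1}$, and the resulting bilinear $(k_1,k_2)$-sum is summable via Schur's test (\cref{lem:schurtest}) paired with the $\hat b^{s,p}\times H^{s+\gamma}$ weights, the surplus $N_0$-powers producing the prefactor $N_0^{-\varepsilon_0/2}$.

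The main technical obstacle is that the integration by parts in $\rho$ produces a remainder containing $\partial_\rho\hat v_{k_2}$, which is not under direct control by the ball condition $\|v\|_{L^\infty_tH^{s+\gamma}_x}\le R_0$ alone. To close the argument I would run the Picard iteration on a slightly stronger norm that also controls $\partial_\rho v$ in a weaker $H^\sigma$, or equivalently substitute the defining Duhamel identity $\partial_\rho v=\fe^{-i\rho\partial_x^2}(i\xi u+i|u|^2u)$ into the remainder and iterate the same frequency analysis; the extra derivative loss is compensated by the harvested factor $|k_1(k_1+2k_2)|^{-1}$. Combining the three bounds above, and applying the same frequency analysis to the difference $\Phi(v_1)-\Phi(v_2)$, proves \eqref{est:bounded} as stated in \cref{lem:Phi}, and yields \eqref{est:contraction} with the same $\theta\in(0,1)$ after choosing $N_0$ large and $T$ small.
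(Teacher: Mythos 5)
Your overall strategy is the same as the paper's: Duhamel plus the algebra property for the cubic term, then a Fourier-side resonance analysis with a normal-form integration by parts in $\rho$ for the potential term, the $N_0$-dependence entering through a frequency cutoff (you cut $\xi$ at $|k_1|=N_0$, the paper cuts the output frequency at $|k|=N_0$; both placements work). However, two steps would fail as written. First, your target bound $\|\Phi_{\mathrm{lin}}^H(v)\|_{L^\infty_tH^{s+\gamma}_x}\lesssim N_0^{-\varepsilon_0/2}R_0$ is not achievable for the exactly resonant slice $k_1=-2k_2$ (equivalently $k=-k_2$): for $p=\infty$ the coefficients $\hat\xi_{k_1}$ need not decay at all on $|k_1|>N_0$ (take $\xi=2\pi\delta$, $\hat\xi_{k}\equiv 1$), there is no oscillation to exploit, and the only available bound is $t\,\|\hat\xi_k\|_{l^\infty}\|v\|_{H^{s+\gamma}}$. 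This piece must be charged to the $C_0(N_0)T$ term, not to $N_0^{-\varepsilon_0/2}$; the paper isolates it as $I_{1,k}$ and does exactly that.

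Second, and more seriously, your first proposed fix for the $\partial_\rho\hat v_{k_2}$ remainder --- running the iteration in a norm that also controls $\partial_\rho v$ in a weaker $H^\sigma$ --- does not close. The best available regularity is $\partial_\rho v\in H^{s+\gamma-2}$ (a two-derivative loss, sharp for a $\delta$-potential), while in the regime $N_0<|k_1|\ll|k_2|$ the harvested factor is only $|\phi(k,k_2)|^{-1}=|k_1|^{-1}|k_1+2k_2|^{-1}\sim |k_1|^{-1}\langle k_2\rangle^{-1}\le N_0^{-1}\langle k_2\rangle^{-1}$, i.e.\ you recover at most one output derivative and the weight $\langle k\rangle^{s+\gamma}\langle k_2\rangle^{-(s+\gamma-2)}/|\phi|\sim\langle k_2\rangle/|k_1|$ is not square-summable. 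The two routes you call ``equivalent'' are not: only the second one (substituting $\partial_\rho v=\fe^{-i\rho\partial_x^2}(i\xi u+i|u|^2u)$ and redoing the resonance analysis on the resulting $\xi\cdot\xi\cdot v$ convolution) works, because it preserves the convolution structure and lets you run a \emph{second} integration by parts on the non-resonant part of the new phase $\phi(k,k_3)$. This is what the paper does (its terms $I_{221}$, $I_{2221}$--$I_{2223}$), and the second-level resonance analysis --- splitting $\phi(k,k_3)=0$ from $\phi(k,k_3)\ne 0$ and controlling the triple-$\xi$ remainder --- is the bulk of the work that your one-line ``iterate the same frequency analysis'' leaves undone.
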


The proof of   \cref{lem:Phi} will be split into several parts.
Firstly, we consider the term
$$
\Phi^1(u)\triangleq i\int_0^t\fe^{-i\rho\partial_x^2}\Big(\xi u(\rho)\Big)\,d\rho,
$$
and we have  the following estimate for it.
\begin{lemma}\label{lem:Phi-i} Under the assumption of \cref{lem:Phi},
\begin{align*}
\big\|\Phi^1(v)\big\|_{L^\infty_tH^{s+\gamma}_x}\le  \Big[C_0(N_0)T+N_0^{-\frac{\varepsilon_0}2}\Big]\big(R_0+R_0^3\big).
\end{align*}
\end{lemma}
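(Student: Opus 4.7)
The plan is to write $\Phi^1(v)$ on the Fourier side, and then exploit the oscillatory phase produced by the flow conjugation $v(t)=e^{-it\partial_x^2}u(t)$. Concretely, since $u(\rho)=e^{i\rho\partial_x^2}v(\rho)$,
\[
\mathcal{F}_k[\Phi^1(v)](t)=i\sum_{k_1+k_2=k}\hat{\xi}_{k_1}\int_0^t e^{i\rho\Omega}\hat{v}_{k_2}(\rho)\,d\rho,\qquad \Omega=\Omega(k_1,k_2)=k_1^2+2k_1k_2,
\]
and I would split the $(k_1,k_2)$-sum according to whether $|k_2|\le N_0$ or $|k_2|>N_0$. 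The low piece must generate the $C_0(N_0)T$ contribution and the high piece the $N_0^{-\varepsilon_0/2}$ contribution.

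For the low part $|k_2|\le N_0$, I would estimate the time integral trivially by $T\sup_\rho|\hat{v}_{k_2}(\rho)|$. Since $k_2$ runs over a finite band, pulling out $\langle k\rangle^{s+\gamma}\lesssim\langle k_1\rangle^{s+\gamma}+N_0^{s+\gamma}$ leaves two manageable convolutions; the $N_0^{s+\gamma}$ piece is absorbed in $C_0(N_0)$ after applying Cauchy--Schwarz in $k_2$ together with $\|\hat\xi_{k_1}\|_{\ell^\infty}\lesssim\|\xi\|_{\hat b^{s,p}}$, while the $\langle k_1\rangle^{s+\gamma}$ piece is summed by Young's convolution inequality using $\langle k_1\rangle^s\hat\xi_{k_1}\in\ell^p$ (paying an extra $N_0^{\gamma}$ loss that is still polynomial in $N_0$). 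Together this yields a prefactor of the form $C_0(N_0)\,T$.

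For the high part $|k_2|>N_0$, I first separate the two resonant strips $k_1=0$ and $k_1=-2k_2$; on each, $\Omega=0$, so the time integral is again bounded by $T$ and the resulting single sum is controlled by $|\hat\xi_0|+\|\langle k_2\rangle^s\hat\xi_{-2k_2}\|_{\ell^p}$, contributing to $C_0(N_0)T$. On the non-resonant complement I integrate by parts in $\rho$:
\[
\int_0^t e^{i\rho\Omega}\hat{v}_{k_2}(\rho)\,d\rho=\frac{e^{it\Omega}\hat v_{k_2}(t)-\hat v_{k_2}(0)}{i\Omega}-\frac{1}{i\Omega}\int_0^t e^{i\rho\Omega}\partial_\rho\hat v_{k_2}(\rho)\,d\rho,
\]
giving a gain $|\Omega|^{-1}=|k_1|^{-1}|k_1+2k_2|^{-1}$. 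I would further split by $|k_1|\ll|k_2|$ (where $|\Omega|\sim|k_1||k_2|$, so $\langle k\rangle\sim\langle k_2\rangle$ and the gain dominates the derivative loss with a surplus power $\langle k_2\rangle^{\gamma-\gamma_p}\le N_0^{\gamma-\gamma_p}\le N_0^{-2\varepsilon_0}$) and $|k_1|\gtrsim|k_2|$ (where $\langle k\rangle\lesssim\langle k_1\rangle$ allows the derivatives to fall on $\xi$, and Young's inequality together with the $\hat b^{s,p}$-norm closes the estimate, the $|k_2|>N_0$ cutoff again producing the $N_0^{-\varepsilon_0/2}$ factor). The boundary terms in the integration by parts are controlled in $\ell^2_k$ by Schur's test (\cref{lem:schurtest}). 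For the remainder term containing $\partial_\rho\hat v_{k_2}$, I would substitute the equation $\partial_\rho v=ie^{-i\rho\partial_x^2}(\xi u+|u|^2u)$; the linear $\xi u$ piece runs the estimate again (a factor of $R_0$), while the cubic $|u|^2u$ piece is handled by the algebra property $H^{s+\gamma}\hookrightarrow L^\infty$ (valid since $s+\gamma>1/2$), yielding the $R_0^3$ factor.

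I expect the main obstacle to be the frequency region $|k_1|\sim|k_2|$ with $k_1\ne-2k_2$: the phase $|\Omega|$ can be as small as one there while the weight $\langle k\rangle^{s+\gamma}$ can be as large as $\langle k_1\rangle^{s+\gamma}$, so the gain from integration by parts barely covers the derivative loss. Balancing the two forces a careful use of Young's convolution exponents dictated by $p>2$ (one cannot use $\ell^2$-based estimates on $\hat\xi$), and the slack $\gamma<\gamma_p$ built into the hypothesis is precisely what produces the $\varepsilon_0$-power gain in $N_0$. The choice $\varepsilon_0=\min\{\tfrac12(\gamma_p-\gamma),\tfrac18\}$ is tailored to absorb both this loss and the minor derivative losses arising when substituting the equation into the remainder term.
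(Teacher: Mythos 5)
Your overall architecture matches the paper's: pass to the Fourier side, isolate the resonant set $\phi=0$ (i.e.\ $k_1=0$ or $k_1=-2k_2$), integrate by parts in time on the non-resonant part, and substitute the equation into the remainder. However, there are two concrete gaps.

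First, your low-frequency cut is $|k_2|\le N_0$, whereas the paper cuts at $|k|\le N_0$, and this matters. In the region $|k_2|\le N_0$, $|k_1|\gg N_0$ you have $\langle k\rangle^{s+\gamma}\sim\langle k_1\rangle^{s+\gamma}$, and the "trivial in time" bound forces the full weight $\langle k_1\rangle^{s+\gamma}$ onto $\hat\xi_{k_1}$. Since only $\langle k_1\rangle^{s}\hat\xi_{k_1}\in \ell^p$ is available, the leftover $\langle k_1\rangle^{\gamma}$ (with $\gamma$ close to $2$) is unbounded over $k_1\in\Z$ and cannot be converted into an "$N_0^{\gamma}$ loss"; Young's inequality does not close. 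This region is genuinely non-resonant ($|\phi|\gtrsim|k_1|^2$) and \emph{must} be treated by integration by parts, which is exactly how the paper handles it (its terms $I_{212}$, $I_{232}$: the gain $\langle k_1\rangle^{-2}$ turns the weight into $\langle k_1\rangle^{\gamma-2}\in \ell^{r}$ with $\frac1r=\frac12-\frac1p$, using $\gamma<2$). With the cut at $|k|\le N_0$ the trivial estimate gives exactly the constant $N_0^{s+\gamma}$, i.e.\ the $C_0(N_0)T$ term, with no loose ends.

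Second, for the remainder term after integration by parts, the claim that the linear $\xi u$ piece "runs the estimate again (a factor of $R_0$)" is not a valid recursion. That piece is $\sum_{k_1+k_2+k_3=k}\frac{1}{i\phi(k,k_2+k_3)}\int_0^t e^{i\rho\phi(k,k_3)}\hat\xi_{k_1}\hat\xi_{k_2}\hat v_{k_3}\,d\rho$: it carries \emph{two} copies of $\hat\xi$ but only \emph{one} phase denominator $1/\phi(k,k_2+k_3)=1/(k_1(2k-k_1))$, which decays in $k_1$ and $2k-k_1$ but gives no summability in the middle variable $k_2$; for $\hat\xi\in\hat\ell^\infty$ the sum $\sum_{k_2}|\hat\xi_{k_2}|$ diverges. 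Moreover, it is not of the form $\Phi^1(w)$ for any controlled $w$ because the new phase $\phi(k,k_3)$ can resonate independently of $\phi(k,k_2+k_3)$. One needs a second resonance decomposition in $\phi(k,k_3)$ and a second integration by parts; only the \emph{product} of the two denominators, bounded as in the paper's \eqref{phi-20-3} by $\langle k\rangle^{-2+\varepsilon_0}$ times factors summable in $k_1+k_2$ and in $k_1$, finally beats the weight $\langle k\rangle^{\gamma}$ (again using $\gamma<2-\varepsilon_0$), after which the iteration terminates — no Neumann series is needed. This second pass is the bulk of the paper's proof (the terms $I_{221}$, $I_{2221}$–$I_{2223}$) and is absent from your plan. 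A smaller remark: your "surplus power $\langle k_2\rangle^{\gamma-\gamma_p}$" in the region $|k_1|\ll|k_2|$ is cancelled by the fact that the $H^{s+\gamma}$ norm of $v$ only pays for $\langle k_2\rangle^{s+\gamma}$, not $\langle k_2\rangle^{s+\gamma_p}$; the actual source of the $N_0^{-\varepsilon_0}$ gain there is $|\phi|\ge|k|\ge N_0$ combined with $|\phi|\sim\langle k_1\rangle\langle k_1+2k_2\rangle$, from which one peels off $|\phi|^{-\varepsilon_0}\le N_0^{-\varepsilon_0}$ and keeps $|\phi|^{-1+\varepsilon_0}$ for summability in $k_1$.
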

\begin{proof}
Taking Fourier transform for $\Phi^1(v)$, we have that for any $k\in \Z$, its Fourier coefficients denoted by $I_k$, equal to
\begin{align*}
I_k\triangleq i\int_0^t\sum\limits_{k_1+k_2=k}\fe^{i\rho\phi(k,k_2)}\hat \xi_{k_1}
\hat v_{k_2}(\rho)\,d\rho.
\end{align*}
Here we denote the phase functions $\phi$ by
$$
\phi(k,k_j)\triangleq |k|^2-|k_j|^2.
$$
Then
\begin{align*}
\left\|i\int_0^t\fe^{-i\rho\partial_x^2}\Big(\xi u(\rho)\Big)\,d\rho\right\|_{H^{s+\gamma}}
=\big\|\langle k\rangle^{s+\gamma} I_k\big\|_{l_k^2}.
\end{align*}
Note that $s+\gamma>\frac12$, then by Cauchy-Schwartz's inequality, we have
\begin{align}
\big\|\langle k\rangle^{s+\gamma} I_k\big\|_{l^2_k(\{|k|\le N_0\})}
\lesssim & N_0^{s+\gamma}  \big\|\hat\xi_k\big\|_{l^\infty_k} \int_0^t \sum\limits_{k_2}| v_k|\,d\rho\nonumber\\
\lesssim &t N_0^{s+\gamma} \big\|\hat\xi_k\big\|_{l^\infty_k} \|v\|_{L^\infty_tH^{s+\gamma}_x}.\label{est:low-frq}
\end{align}
Hence, we only consider the piece
$
\big\|\langle k\rangle^{s+\gamma} I_k\big\|_{l^2_k(\{|k|> N_0\})}.
$
Now we split $I$ into two parts as
$$
I_k=I_{1,k}+I_{2,k},
$$
where
\begin{align*}
I_{1,k}\triangleq i\int_0^t\sum\limits_{\substack{k_1+k_2=k\\ \phi(k,k_2)=0}}\hat \xi_{k_1}
\hat v_{k_2}(\rho)\,d\rho;\quad
I_{2,k}\triangleq i\int_0^t\sum\limits_{\substack{k_1+k_2=k\\ \phi(k,k_2)\ne 0}}\fe^{i\rho\phi(k,k_2)}\hat \xi_{k_1}
\hat v_{k_2}(\rho)\,d\rho.
\end{align*}

We first  estimate  $I_{1,k}$. Note that $\phi(k,k_2)=0$ is equivalent to $k=k_2$ or $k=-k_2$, and so
\begin{align*}
I_{1,k}= i\int_0^t \hat \xi_0 \hat v_{k}(\rho)\,d\rho
+ i\int_0^t \hat \xi_{2k}\>\hat v_{-k}(\rho)\,d\rho.
\end{align*}
Therefore,
\begin{align*}
\big\|\langle k\rangle^{s+\gamma} I_{1,k}\big\|_{l^2_k}
\lesssim &\int_0^t \big\|\langle k\rangle^{s+\gamma}  \hat \xi_0 \hat v_{k}(\rho)\big\|_{l^2_k}\,d\rho
+ \int_0^t \big\|\langle k\rangle^{s+\gamma} \hat \xi_{2k}\>\hat v_{-k}(\rho)\big\|_{l^2_k}\,d\rho\\
\lesssim & t \big\|\hat \xi_k\big\|_{l^\infty_k} \big\|\langle k\rangle^{s+\gamma}\hat v_k\big\|_{L^\infty_t l^2_k}
\lesssim t \big\|\hat \xi_k\big\|_{l^\infty_k} \|v\|_{L^\infty_tH^{s+\gamma}_x}.
\end{align*}

Now we start to estimate  $I_{2,k}$.  By integration-by-parts and noting that
\begin{align*}
\partial_tv(t)=\fe^{-it\partial_x^2}\left(\xi u+|u|^2u\right),
\end{align*}
we have
\begin{align*}
I_{2,k}= & i\sum\limits_{\substack{k_1+k_2=k\\ \phi(k,k_2)\ne 0}}\fe^{i\rho\phi(k,k_2)} \frac{\hat \xi_{k_1}
\hat v_{k_2}(\rho)}{i\phi(k,k_2)}\Big|_0^t
-i\int_0^t\sum\limits_{\substack{k_1+k_2+k_3=k\\ \phi(k,k_2+k_3)\ne 0}}
\frac{\fe^{i\rho\phi(k,k_3)}}{i\phi(k,k_2+k_3)}
\hat \xi_{k_1} \hat \xi_{k_2}
\hat v_{k_3}(\rho)\,d\rho\\
& - i\int_0^t\sum\limits_{\substack{k_1+k_2=k\\ \phi(k,k_2)\ne 0}}
\fe^{i\rho k^2}\frac{1}{i\phi(k,k_2)}
\hat \xi_{k_1} \widehat{\big(|u|^2u\big)}_{k_2}\,d\rho\\
\triangleq &
I_{21,k}+I_{22,k}+I_{23,k} .
\end{align*}

\subsubsection{Estimates on $I_{21,k}$.} For this term, by using the dual and Parseval's identity, we obtain
\begin{align}\label{781}
\big\|\langle k\rangle^{s+\gamma}I_{21,k}\big\|_{l^2(|k|\geq N_0)}=&\sup\limits_{h:\|h\|_{l^2}=1}\big\langle\langle k\rangle^{s+\gamma}I_{21,k}, h_k\big\rangle\nonumber\\
=&2\pi \sup\limits_{h:\|h\|_{l^2}=1} \mbox{Re} \sum\limits_{\substack{k_1+k_2=k\\ \phi(k,k_2)\ne 0,|k|\geq N_0 }}  \langle k\rangle^{s+\gamma}
\fe^{\rho\phi(k,k_2)} \frac{\hat \xi_{k_1}
\hat v_{k_2}(\rho)}{i\phi(k,k_2)}\Big|_0^t
\> \overline{h_k}\nonumber\\
\leq&2\pi \sup\limits_{h:\|h\|_{l^2}=1}\sup\limits_{t\in [0,T]} \sum\limits_{\substack{k_1+k_2=k\\ \phi(k,k_2)\ne 0,|k|\geq N_0, |k_1|\lesssim|k_2| }}\langle k\rangle^{s+\gamma}\frac{\big|\hat \xi_{k_1}\big|
\big|\hat v_{k_2}(\rho)\big|}{|\phi(k,k_2)|}|h_k|\nonumber\\
&+2\pi\sup\limits_{h:\|h\|_{l^2}=1}\sup\limits_{t\in [0,T]} \sum\limits_{\substack{k_1+k_2=k\\ \phi(k,k_2)\ne 0,|k|\geq N_0, |k_1|\gg|k_2| }}\langle k\rangle^{s+\gamma}\frac{\big|\hat \xi_{k_1}\big|
\big|\hat v_{k_2}(\rho)\big|}{|\phi(k,k_2)|}|h_k|\nonumber\\
\triangleq &
I_{211}+I_{212}.
\end{align}
For short, we omit  $\sup\limits_{h:\|h\|_{l^2}=1}\sup\limits_{t\in [0,T]} $ in the front, and turn to check the estimate hold for any $h$ such that $\|h\|_{l^2}= 1$ and any $t\in [0,T]$
in the proof.

$\bullet$ {\bf Estimates on $I_{211}$.} Note that $|\phi(k,k_2)|=(k-k_2)(k+k_2)$, and $\phi\neq0$ implies $|k\pm k_2|\geq1$. Thus,
\begin{align*}
|\phi(k,k_2)|=|k-k_2||k+k_2|\geq |k|\geq N_0.
\end{align*}
Then, combining with the estimate
$$
|\phi(k,k_2)|=|k_1(k_1+2k_2)|\sim \langle k_1\rangle\langle k_1+2k_2\rangle,
$$
we have
\begin{align*}
I_{211}
\lesssim &
\sum\limits_{\substack{k_1+k_2=k\\ \phi(k,k_2)\ne 0,|k|\geq N_0, |k_1|\lesssim|k_2| }} \langle k_2\rangle^{s+\gamma}\frac 1{|\phi(k,k_2)|}\big|\hat \xi_{k_1}\big||\hat v_{k_2}||h_k|\\
\lesssim &
N_0^{-\varepsilon_0} \sum\limits_{\substack{k_1+k_2=k\\ \phi(k,k_2)\ne 0,|k|\geq N_0, |k_1|\lesssim|k_2| }} \langle k_2\rangle^{s+\gamma}\frac 1{|\phi(k,k_2)|^{1-\varepsilon_0}}\big|\hat \xi_{k_1}\big||\hat v_{k_2}||h_k|\\
\lesssim &
N_0^{-\varepsilon_0}\sum_{k_2}\sum_{k_1}\frac {1}{\langle k_1\rangle^{1-\varepsilon_0}}  \frac {1}{\langle k_1+2k_2\rangle^{1-\varepsilon_0}}\big|\hat \xi_{k_1}\big|\langle k_2\rangle^{s+\gamma}|\hat v_{k_2}||h_{k_1+k_2}|\\
\lesssim &
N_0^{-\varepsilon_0}\big\|\hat \xi_{k}\big\|_{l_{k}^{\infty}}\sum_{k_2}\sum_{k_1}\left(\frac {1}{\langle k_1\rangle^{2-2\varepsilon_0}}  +\frac {1}{\langle k_1+2k_2\rangle^{2-2\varepsilon_0}}\right)\langle k_2\rangle^{s+\gamma}|\hat v_{k_2}||h_{k_1+k_2}|\\
\lesssim &
N_0^{-\varepsilon_0}\big\|\hat \xi_{k}\big\|_{l_{k}^{\infty}}\sum_{k_1}\frac {1}{\langle k_1\rangle^{2-2\varepsilon_0}}\sum_{k_2}\langle k_2\rangle^{s+\gamma}|\hat v_{k_2}||h_{k_1+k_2}|\\
&+N_0^{-\varepsilon_0}\big\|\hat \xi_{k}\big\|_{l_{k}^{\infty}}\sum_{\tilde{k}_1}\frac {1}{\langle\tilde{k}_1\rangle^{2-2\varepsilon_0}}\sum_{k_2}\langle k_2\rangle^{s+\gamma}|\hat v_{k_2}||h_{\tilde{k}_1-k_2}|,
\end{align*}
where in the last inequality, we have used the change of the variable $\tilde k_1=k_1+2k_2$.
By the Cauchy-Schwartz inequality, we obtain that
\begin{align}\label{782}
|I_{211}|\lesssim  & N_0^{-\varepsilon_0}\big\|\hat \xi_{k}\big\|_{l_{k}^{\infty}}\|v\|_{H^{s+\gamma}}\|h_k\|_{l_k^2}
\lesssim  N_0^{-\varepsilon_0}\|\xi\|_{\hat{b}^{s,p}}\|v\|_{H^{s+\gamma}}.
\end{align}

$\bullet$ {\bf Estimates on  $I_{212}$.} When $|k_1|\gg|k_2|$, we   have
$$
|\phi(k,k_2)|\gtrsim |k_1|^2.
$$
Then  by the Cauchy-Schwartz inequality, we find
\begin{align*}
I_{212}\lesssim &\sum\limits_{\substack{k_1+k_2=k\\ \phi(k,k_2)\ne 0,|k|\geq N_0, |k_1|\gg|k_2| }}\langle k_1\rangle^{\gamma+s-2}\big|\hat \xi_{k_1}\big||\hat v_{k_2}||h_{k_1+k_2}|\\
\lesssim &\sum_{k_2}\Big(\sum_{k_1}\big(\langle k_1\rangle^s\big|\hat \xi_{k_1}\big|\big)^p\Big)^{\frac 1p}\Big(\sum_{k_1:|k_1|\gtrsim N_0}\big(\langle k_1\rangle^{\gamma-2}|h_{k_1+k_2}|\big)^{p'}\Big)^{\frac 1{p'}}|\hat v_{k_2}|.
\end{align*}
Then by further using the Cauchy-Schwartz inequality,  we get
\begin{align*}
I_{212}
\lesssim & \|\xi\|_{\hat{b}^{s,p}}\|v\|_{H^{s+\gamma}}\|h\|_{l^2}\big\|\langle k_1\rangle^{\gamma-2}\big\|_{l^r(|k_1|\gtrsim N_0)}\sum_{k_2}|\hat v_{k_2}|,\nonumber
\end{align*}
where $r$ satisfies that $\frac 1r=\frac 12-\frac 1p$ and so $(\gamma-2)r<-1$. Moreover, $r+s>\frac12$ and so we get
\begin{equation}\label{783}
I_{212}
\lesssim N_0^{-\varepsilon_0}\|\xi\|_{\hat{b}^{s,p}}\|v\|_{L^\infty_tH^{s+\gamma}_x}.
\end{equation}

Collecting the estimates \eqref{781}-\eqref{783}, we obtain
\begin{align*}
\big\|\langle k\rangle^{s+\gamma}I_{21,k}\big\|_{l^2(|k|\geq N_0)}\lesssim N_0^{-\varepsilon_0}\|\xi\|_{\hat{b}^{s,p}}\|v\|_{H^{s+\gamma}}.
\end{align*}

\subsubsection{Estimates on $I_{22,k}$.}\label{subsub1} For this term, we further split it into the following two parts as
\begin{align*}
I_{22,k}= &-i\int_0^t\sum\limits_{\substack{k_1+k_2+k_3=k\\ \phi(k,k_2+k_3)\ne 0,\phi(k,k_3)=0}}
\frac{\fe^{i\rho\phi(k,k_3)}}{i\phi(k,k_2+k_3)}
\hat \xi_{k_1} \hat \xi_{k_2}
\hat v_{k_3}(\rho)\,d\rho\\
&-i\int_0^t\sum\limits_{\substack{k_1+k_2+k_3=k\\ \phi(k,k_2+k_3)\ne 0,\phi(k,k_3)\ne 0}}
\frac{\fe^{i\rho\phi(k,k_3)}}{i\phi(k,k_2+k_3)}
\hat \xi_{k_1} \hat \xi_{k_2}
\hat v_{k_3}(\rho)\,d\rho\\
\triangleq &
I_{221,k}+I_{222,k}.
\end{align*}

$\bullet$ {\bf Estimates on $I_{221,k}$.}  Note that $\phi(k,k_3)=0$ implies that $k=k_3$ or $k=-k_3$. Therefore,
\begin{align*}
I_{221,k}= &-i\int_0^t\sum\limits_{k_1: \phi(k,-k_1+k)\ne 0}
\frac1{i\phi(k,-k_1+k)}
\hat \xi_{k_1} \hat \xi_{-k_1}
\hat v_{k}(\rho)\,d\rho\\
&-i\int_0^t\sum\limits_{\substack{k_1+k_2=2k\\ \phi(k,k_2-k)\ne 0}}
\frac1{i\phi(k,k_2-k)}
\hat \xi_{k_1} \hat \xi_{k_2}
\hat v_{-k}(\rho)\,d\rho\\
\triangleq &
I_{2211,k}+I_{2212,k}.
\end{align*}

{\emph{1) On $I_{2211,k}$}.}  Since $\phi(k,-k_1+k)=k_1(2k-k_1)$, it can be rewritten as
\begin{align*}
I_{2211,k}= &-i\int_0^t\sum\limits_{\substack{k_1\neq 0\\ 2k-k_1\ne 0}}
\frac1{ik_1(2k-k_1)}
\hat \xi_{k_1} \hat \xi_{-k_1}
\hat v_{k}(\rho)\,d\rho.
\end{align*}
Therefore, we have that
\begin{align*}
\big\|\langle k\rangle^{s+\gamma} I_{2211,k}\big\|_{l^2_k}
= &\Big\|\langle k\rangle^{s+\gamma}\int_0^t\sum\limits_{\substack{k_1\neq 0\\ 2k-k_1\ne 0}}
\frac1{|k_1||2k-k_1|}
\big|\hat \xi_{k_1}\big|| \hat \xi_{-k_1} |
|\hat v_{k}(\rho)|\,d\rho\Big\|_{l^2_k}\\
\lesssim &
\big\|\hat \xi_{k}\big\|_{l^\infty_k}^2
\Big\|\langle k\rangle^{s+\gamma}\int_0^t\sum\limits_{\substack{k_1\neq 0\\ 2k-k_1\ne 0}}
\left(\frac1{|k_1|^2}+\frac1{|2k-k_1|^2}\right)
|\hat v_{k}(\rho)|\,d\rho\Big\|_{l^2_k}\\
\lesssim &
\big\|\hat \xi_{k}\big\|_{l^\infty_k}^2
\int_0^t\big\|\langle k\rangle^{s+\gamma}
|\hat v_{k}(\rho)|\big\|_{l^2_k}\,d\rho
\lesssim
t\big\|\hat \xi_k\big\|_{l^\infty_k}^2\|v\|_{L^\infty_tH^{s+\gamma}_x}.
\end{align*}

{\emph{2) On $I_{2212,k}$.}} Since $\phi(k,k_2-k)=k_1 k_2$, it can be rewritten as
\begin{align*}
I_{2212,k}= &-i\int_0^t\sum\limits_{\substack{k_1+k_2=2k\\ k_1\ne 0,k_2\ne 0}}
\frac1{ik_1k_2}
\hat \xi_{k_1} \hat \xi_{k_2}
\hat v_{-k}(\rho)\,d\rho.
\end{align*}
Therefore, we have that
\begin{align*}
\big\|\langle k\rangle^{s+\gamma} I_{2212,k}\big\|_{l^2_k}
= &\Big\|\langle k\rangle^{s+\gamma}\int_0^t\sum\limits_{\substack{k_1+k_2=2k\\ k_1\ne 0,k_2\ne 0}}
\frac1{|k_1||k_2|}
\hat \xi_{k_1} \hat \xi_{k_2}
\hat v_{-k}(\rho)\,d\rho\Big\|_{l^2_k}\\
\lesssim &
\big\|\hat \xi_{k}\big\|_{l^\infty_k}^2
\Big\|\langle k\rangle^{s+\gamma}\int_0^t\sum\limits_{\substack{k_1\neq 0\\ 2k-k_1\ne 0}}
\left(\frac1{|k_1|^2}+\frac1{|2k-k_1|^2}\right)
|\hat v_{-k}(\rho)|\,d\rho\Big\|_{l^2_k}\\
\lesssim &
t\big\|\hat \xi_k\big\|_{l^\infty_k}^2\|v\|_{L^\infty_tH^{s+\gamma}_x}.
\end{align*}

$\bullet$ {\bf Estimates on $I_{222,k}$.} By integration-by-parts, we obtain that
\begin{align*}
&I_{222,k}\\
=&
-i\sum\limits_{\substack{k_1+k_2+k_3=k\\ \phi(k,k_2+k_3)\ne 0,\phi(k,k_3)\ne 0}}
\frac{\fe^{i\rho\phi(k,k_3)}}{i\phi(k,k_2+k_3)\cdot i\phi(k,k_3)}
\hat \xi_{k_1} \hat \xi_{k_2}
\hat v_{k_3}(\rho)\Big|_0^t\\
& +
i\int_0^t\sum\limits_{\substack{k_1+k_2+k_3+k_4=k\\ \phi(k,k_2+k_3+k_4)\ne 0,\phi(k,k_3+k_4)\ne 0}}
\frac{\fe^{i\rho\phi(k,k_4)}}{i\phi(k,k_2+k_3+k_4)\cdot i\phi(k,k_3+k_4)}
\hat \xi_{k_1} \hat \xi_{k_2}  \hat \xi_{k_3}
\hat v_{k_4}(\rho)\,d\rho\\
&+
i\int_0^t \sum\limits_{\substack{k_1+k_2+k_3=k\\ \phi(k,k_2+k_3)\ne 0,\phi(k,k_3)\ne 0}}
\frac{\fe^{i\rho k^2}}{i\phi(k,k_2+k_3)\cdot i\phi(k,k_3)}
\hat \xi_{k_1} \hat \xi_{k_2}
\widehat{\big(|u|^2u\big)}_{k_3}\,d\rho\\
\triangleq &
I_{2221,k}+I_{2222,k}+I_{2223,k}.
\end{align*}

{\emph{1) On $I_{2221,k}$.}} Note that
$$
\phi(k,k_2+k_3)=k_1(2k-k_1);\quad
\phi(k,k_3)=(k-k_3)(k+k_3).
$$
Moreover, we use the formula that when $\phi(k,j)\ne 0$,
\begin{align*}
\frac1{\phi(k,j)}=\frac1{2k}\Big(\frac1{k-j}+\frac1{k+j}\Big)
\end{align*}
and obtain that
\begin{align*}
&\frac{1}{\phi(k,k_3)}=\frac 1{2k}\Big[\frac 1{k-k_3}+\frac1{k+k_3}\Big],\quad \frac{1}{\phi(k,k_2+k_3)}=\frac 1{2k}\Big[\frac 1{k-(k_2+k_3)}+\frac1{k+k_2+k_3}\Big].
\end{align*}
Then by the Cauchy-Schwartz inequality, we obtain that when $\phi(k,k_2+k_3)\ne 0$,
\begin{align}
\left|\frac1{\phi(k,k_2+k_3)}\right|
= &
 \frac1{|k_1|^{\frac12\varepsilon_0}|2k-k_1|^{\frac12\varepsilon_0}}\frac{2^{1-\frac12\varepsilon_0}}{|k|^{1-\frac12\varepsilon_0}}\left(\frac1{|k_1|^{1-\frac12\varepsilon_0}}+\frac1{|2k-k_1|^{1-\frac12\varepsilon_0}}\right)\notag\\
\lesssim &
\langle k\rangle^{-1+\frac12\varepsilon_0} \left(\frac1{\langle k_1\rangle^{1+\frac12\varepsilon_0}}+\frac1{\langle 2k-k_1\rangle^{1+\frac12\varepsilon_0}}\right);\label{phi-2-3}
\end{align}
and similarly,
\begin{align}\label{phi-0-3}
\left|\frac1{\phi(k,k_3)}\right|
\lesssim &
\langle k\rangle^{-1+\frac12\varepsilon_0} \left(\frac1{\langle k-k_3\rangle^{1+\frac12\varepsilon_0}}+\frac1{\langle k+k_3\rangle^{1+\frac12\varepsilon_0}}\right).
\end{align}
Together with \eqref{phi-2-3} and \eqref{phi-0-3}, and using $k_1=k-k_2-k_3$ it gives that
\begin{align}\label{phi-20-3}
\frac{1}{|\phi(k,k_2+k_3)||\phi(k,k_3)|}\lesssim &\langle k\rangle^{-2+\varepsilon_0}\left[\frac 1 {\langle k-k_3\rangle^{1+\frac {1}{2}\varepsilon_0}}+\frac 1 {\langle k+k_3\rangle^{1+\frac {1}{2}\varepsilon_0}}\right]\notag\\
&\cdot \left[\frac 1 {\langle k-(k_2+k_3)\rangle^{1+\frac {1}{2}\varepsilon_0}}+\frac 1 {\langle k+k_2+k_3\rangle^{1+\frac {1}{2}\varepsilon_0}}\right].
\end{align}
Now we are ready to estimate $I_{2221,k}$. By duality, we have that
\begin{align*}
\Big\|\langle k\rangle^{s+\gamma}|I_{2221,k}|\Big\|_{l_k^2(|k|\geq N_0)}
=&\sup\limits_{h:\|h\|_{l^2}=1}\big\langle \langle k\rangle^{s+\gamma}|I_{2221,k}|, h_k\big\rangle\nonumber\\
=&\sup\limits_{h:\|h\|_{l^2}=1}\sup\limits_{t\in [0,T]}\sum\limits_{\substack{k_1+k_2+k_3=k\\ \phi(k,k_2+k_3)\ne 0,\phi(k,k_3)\ne 0}}\frac{\langle k\rangle^{s+\gamma}}{|\phi(k,k_2+k_3)||\phi(k,k_3)|}\\
&\quad \cdot |\hat \xi_{k_1} | |\hat \xi_{k_2} ||\hat v_{k_3} ||h_k|.
\end{align*}
As before, we  omit  $\sup\limits_{h:\|h\|_{l^2}=1}\sup\limits_{t\in [0,T]}$ in the front. Then by \eqref{phi-20-3}, the trivial inequality
$
\langle k\rangle^s\le \langle k_1\rangle^s\langle k_2\rangle^s\langle k_3\rangle^s,
$
 and the Cauchy-Schwartz inequality, we  obtain
\begin{align*}
&\Big\|\langle k\rangle^{s+\gamma}|I_{2221,k}|\Big\|_{l_k^2(|k|\geq N_0)} \\
\lesssim&\sum\limits_{\substack{k_1+k_2+k_3=k,|k|\geq N_0\\ \phi(k,k_2+k_3)\ne 0,\phi(k,k_3)\ne 0}}\frac{\langle k\rangle^{\gamma}}{|\phi(k,k_2+k_3)||\phi(k,k_3)|}\langle k_1\rangle^s|\hat \xi_{k_1} |\langle k_2\rangle^s|\hat \xi_{k_2} |\langle k_3\rangle^s|\hat v_{k_3} ||h_k| \\
\lesssim&\sum\limits_{\substack{k_1+k_2+k_3=k,|k|\geq N_0\\ \phi(k,k_2+k_3)\ne 0,\phi(k,k_3)\ne 0}}\langle k\rangle^{-2+\varepsilon_0+\gamma}\left[\frac 1 {\langle k-k_3\rangle^{1+\frac {1}{2}\varepsilon_0}}+\frac 1 {\langle k+k_3\rangle^{1+\frac {1}{2}\varepsilon_0}}\right] \\
&\cdot \left[\frac 1 {\langle k-(k_2+k_3)\rangle^{1+\frac {1}{2}\varepsilon_0}}+\frac 1 {\langle k+k_2+k_3\rangle^{1+\frac {1}{2}\varepsilon_0}}\right]\langle k_1\rangle^s|\hat \xi_{k_1} |\langle k_2\rangle^s|\hat \xi_{k_2} |\langle k_3\rangle^s|\hat v_{k_3} ||h_k|.
\end{align*}
Denote
$$\zeta_{k_3,k}^1=\langle k\rangle^{-2+\varepsilon_0+\gamma}\left[\frac 1 {\langle k-k_3\rangle^{1+\frac {1}{2}\varepsilon_0}}+\frac 1 {\langle k+k_3\rangle^{1+\frac {1}{2}\varepsilon_0}}\right]|h_k|
$$
and
$$
\zeta_{k_3,k}^2=\sum_{k_2} \left[\frac 1 {\langle k-(k_2+k_3)\rangle^{1+\frac {1}{2}\varepsilon_0}}+\frac 1 {\langle k+k_2+k_3\rangle^{1+\frac {1}{2}\varepsilon_0}}\right]\langle k-k_2-k_3\rangle^s|\hat \xi_{k-k_2-k_3} |\langle k_2\rangle^s\big|\hat \xi_{k_2}\big|.
$$
Then we have that
\begin{align*}
\Big\|\langle k\rangle^{s+\gamma}|I_{2221,k}|\Big\|_{l_k^2(|k|\geq N_0)}
\lesssim&\sum\limits_{\substack{k,k_3\\ |k|\geq N_0}}\zeta_{k_3,k}^1\>\zeta_{k_3,k}^2 \> |\langle k_3\rangle^s|\hat v_{k_3} | \\
\lesssim&\sum_{k_3}\langle k_3\rangle^s|\hat v_{k_3} |\cdot \left(\sum_{k:|k|\geq N_0}\left(\zeta_{k_3,k}^1\right)^{p'}\right)^{\frac 1{p'}}\cdot\left(\sum_{k:|k|\geq N_0}(\zeta_{k_3,k}^2)^{p}\right)^{\frac 1{p}}.
\end{align*}
For convenience, we further denote
$$
J_{k_3}=\Big\|\zeta_{k_3,k}^1\Big\|_{l^{p'}_k (|k|\geq N_0)},\quad
\tilde{J}_{k_3}=\Big\|\zeta_{k_3,k}^2\Big\|_{l_k^p (|k|\geq N_0)}.
$$
Then it gives that
\begin{align}\label{711-2}
\Big\|\langle k\rangle^{s+\gamma}|I_{2221,k}|\Big\|_{l_k^2(|k|\geq N_0)}
\lesssim
&\sum_{k_3}\langle k_3\rangle^s|\hat v_{k_3}|\>J_{k_3}\>\tilde{J}_{k_3},
\end{align}
Therefore, it reduces to estimate $J_{k_3}$ and $\tilde{J}_{k_3}$.

For the term $J_{k_3}$,
\begin{align}\label{711-3}
 J_{k_3}
 \lesssim &\|h_k\|_{l_k^2}\Big\|\langle k\rangle^{-2+\varepsilon_0+\gamma}\left[\frac 1 {\langle k-k_3\rangle^{1+\frac {1}{2}\varepsilon_0}}+\frac 1 {\langle k+k_3\rangle^{1+\frac {1}{2}\varepsilon_0}}\right]\Big\|_{l_k^r(|k|\geq N_0)}
  \lesssim  N_0^{-\varepsilon_0}\|h\|_{l^2},
 \end{align}
where we have used the relationship that $r$ satisfies that $\frac 1r=\frac 12-\frac 1p$ and $(\gamma-2)r<-1$.

For the term $\tilde{J}_{k_3}$, we change the variable and write
\begin{align}\label{711-4}
\tilde{J}_{k_3}=&\Big\|\sum_{k_2}\Big( \frac 1 {\langle k-(k_2+k_3)\rangle^{1+\frac {1}{2}\varepsilon_0}}+\frac 1 {\langle k+k_2+k_3\rangle^{1+\frac {1}{2}\varepsilon_0}}\Big)\langle k-k_2-k_3\rangle^s|\hat \xi_{k-k_2-k_3} |\langle k_2\rangle^s\big|\hat \xi_{k_2}\big|\Big\|_{l_k^p}\nonumber\\
\lesssim& \Big\|\sum_{k_1}\frac 1 {\langle k_1\rangle^{1+\frac {1}{2}\varepsilon_0}}\langle k_1\rangle^s|\hat \xi_{k_1} |\langle k-k_1-k_3\rangle^s|\hat \xi_{k-k_1-k_3}|\Big\|_{l_k^p}\nonumber\\
&+\Big\|\sum_{\tilde{k}_1}\frac 1 {\langle\tilde{k}_1\rangle^{1+\frac {1}{2}\varepsilon_0}}\langle 2k-\tilde{k}_1\rangle^s|\hat \xi_{2k-\tilde{k}_1} |\langle \tilde{k}_1-k-k_3\rangle^s|\hat \xi_{\tilde{k}_1-k-k_3}|\Big\|_{l_k^p}\nonumber\\
\lesssim&\|\langle k\rangle^s\hat \xi_{k}\|_{l_k^p}\|\langle k\rangle^s\hat \xi_{k}\|_{l_k^{\infty}}
\lesssim\|\xi\|_{\hat{b}^{s,p}}^2.
\end{align}
Hence, by the estimates \eqref{711-2}-\eqref{711-4}, we have
\begin{align*}
\Big\|\langle k\rangle^{s+\gamma}|I_{2221,k}|\Big\|_{l_k^2(|k|\geq N_0)}\lesssim& N_0^{-\varepsilon_0}\|h\|_{l^2}\|\xi\|_{\hat{b}^{s,p}}^2\sum_{k_3} \langle k_3\rangle^s|\hat v_{k_3} |
\lesssim N_0^{-\varepsilon_0}\|h\|_{l^2}\|\xi\|_{\hat{b}^{s,p}}^2\|v\|_{H^{s+\gamma}}.
\end{align*}

 {\emph{2) On $I_{2222,k}$.}} Similarly as \eqref{phi-20-3}, we have that
\begin{align}\label{phi-234}
\frac{1}{|\phi(k,k_2+k_3+k_4)||\phi(k,k_3+k_4)|}\lesssim &\langle k\rangle^{-2+\varepsilon_0}\left[\frac 1 {\langle k_1\rangle^{1+\frac {1}{2}\varepsilon_0}}+\frac 1 {\langle 2k-k_1\rangle^{1+\frac {1}{2}\varepsilon_0}}\right]\notag\\
&\cdot \left[\frac 1 {\langle k-(k_3+k_4)\rangle^{1+\frac {1}{2}\varepsilon_0}}+\frac 1 {\langle k+k_3+k_4\rangle^{1+\frac {1}{2}\varepsilon_0}}\right].
\end{align}
Then by the duality, \eqref{phi-234} and Cauchy-Schwartz's inequality, we have that (again, from the second line we omit  $\sup\limits_{h:\|h\|_{l^2}=1}$ in the front)
\begin{align*}
&\Big\|\langle k\rangle^{s+\gamma}|I_{2222,k}|\Big\|_{l_k^2(|k|\geq N_0)}
=\sup\limits_{h:\|h\|_{l^2}=1}\big\langle\langle k\rangle^{s+\gamma}|I_{2222,k}|, h_k\big\rangle \\
\lesssim&\sum\limits_{\substack{k_1+k_2+k_3+k_4=k\\ \phi(k,k_2+k_3+k_4)\ne 0,\phi(k,k_3+k_4)\ne 0}}\int_0^t
\frac{\langle k\rangle^{s+\gamma}\big|\hat \xi_{k_1}\big|| \hat \xi_{k_2}| | \hat \xi_{k_3}|
|\hat v_{k_4}(\rho)||h_k|}{\phi(k,k_2+k_3+k_4)\cdot \phi(k,k_3+k_4)}d\rho  \\
\lesssim&\sum\limits_{\substack{k_1+k_2+k_3+k_4=k\\ \phi(k,k_2+k_3+k_4)\ne 0,\phi(k,k_3+k_4)\ne 0}}\int_0^t\langle k\rangle^{-2+\varepsilon_0+\gamma}\left[\frac 1 {\langle k-(k_3+k_4)\rangle^{1+\frac {1}{2}\varepsilon_0}}+\frac 1 {\langle k+k_3+k_4\rangle^{1+\frac {1}{2}\varepsilon_0}}\
\right]\\
&\qquad \cdot \left[\frac 1 {\langle k_1\rangle^{1+\frac {1}{2}\varepsilon_0}}+\frac 1 {\langle 2k-k_1\rangle^{1+\frac {1}{2}\varepsilon_0}}\right] \langle k_1\rangle^s|\hat \xi_{k_1} |\langle k_2\rangle^s|\hat \xi_{k_2} |\langle k_3\rangle^s|\hat \xi_{k_3} |\langle k_4\rangle^s|\hat v_{k_4} | |h_k|d\rho,
\end{align*}
where in the last step we have used the inequality
$
\langle k\rangle^s\le \langle k_1\rangle^s\langle k_2\rangle^s\langle k_3\rangle^s.
$
Then we further get that
\begin{align*}
 &\Big\|\langle k\rangle^{s+\gamma}|I_{2222,k}|\Big\|_{l_k^2(|k|\geq N_0)}\\
\lesssim&\int_0^t\sum_{k_4}\sum_{k}\sum_{k_3}\sum_{k_1}\left[\frac 1 {\langle k_1\rangle^{1+\frac {1}{2}\varepsilon_0}}+\frac 1 {|2k-k_1\rangle^{1+\frac {1}{2}\varepsilon_0}}\right]\langle k_1\rangle^s\big|\hat \xi_{k_1}\big|\langle k-k_1-k_3-k_4\rangle^s|\hat \xi_{k-k_1-k_3-k_4} |\\
&\qquad \cdot \left[\frac 1 {\langle k-(k_3+k_4)\rangle^{1+\frac {1}{2}\varepsilon_0}}+\frac 1 {\langle k+k_3+k_4\rangle^{1+\frac {1}{2}\varepsilon_0}}\
\right]\langle k_3\rangle^s|\hat \xi_{k_3} |\langle k_4\rangle^s|\hat v_{k_4} | |h_k|  \langle k\rangle^{-2+\varepsilon_0+\gamma}d\rho\\
\lesssim&\big\|\langle k\rangle^s|\hat \xi_{k} |\big\|_{l_k^{\infty}}^2\int_0^t\sum_{k_4}\sum_{k}\sum_{k_3} \left[\frac 1 {\langle k-(k_3+k_4)\rangle^{1+\frac {1}{2}\varepsilon_0}}+\frac 1 {\langle k+k_3+k_4\rangle^{1+\frac {1}{2}\varepsilon_0}}\
\right]\\
&\qquad\qquad\qquad\quad \cdot\langle k_3\rangle^s|\hat \xi_{k_3} |\langle k_4\rangle^s|\hat v_{k_4} | |h_k|  \langle k\rangle^{-2+\varepsilon_0+\gamma}d\rho.
\end{align*}
We denote
\begin{align*}
M_k=\sum_{k_3} \left[\frac 1 {\langle k-(k_3+k_4)\rangle^{1+\frac {1}{2}\varepsilon_0}}+\frac 1 {\langle k+k_3+k_4\rangle^{1+\frac {1}{2}\varepsilon_0}}\
\right]\langle k_3\rangle^s|\hat \xi_{k_3} |,
\end{align*}
then we have that
\begin{align}
\Big\|\langle k\rangle^{s+\gamma}|I_{2222,k}|\Big\|_{l_k^2(|k|\geq N_0)}
\label{711-6}
\lesssim&\big\|\langle k\rangle^s|\hat \xi_{k} |\big\|_{l_k^{\infty}}^2\int_0^t\sum_{k_4}\sum_{k} M_k \langle k_4\rangle^s|\hat v_{k_4} | |h_k| \langle k\rangle^{-2+\varepsilon_0+\gamma}d\rho.
\end{align}
Now, we estimate the term $M_k$ as follows
\begin{align*}
M_k\lesssim &\Bigg\|\left[\frac 1 {\langle k-(k_3+k_4)\rangle^{1+\frac {1}{2}\varepsilon_0}}+\frac 1 {\langle k+k_3+k_4\rangle^{1+\frac {1}{2}\varepsilon_0}}\
\right]^{\frac 1{p'}}\Bigg\|_{l_{k_3}^{p'}}\\
&\cdot  \Bigg\|\left[\frac 1 {\langle k-(k_3+k_4)\rangle^{1+\frac {1}{2}\varepsilon_0}}+\frac 1 {\langle k+k_3+k_4\rangle^{1+\frac {1}{2}\varepsilon_0}}\
\right]^{\frac 1p}\langle k_3\rangle^s|\hat \xi_{k_3} |\Bigg\|_{l_{k_3}^{p}}\\
\lesssim &\Bigg\|\left[\frac 1 {\langle k-(k_3+k_4)\rangle^{1+\frac {1}{2}\varepsilon_0}}+\frac 1 {\langle k+k_3+k_4\rangle^{1+\frac {1}{2}\varepsilon_0}}\
\right]^{\frac 1p}\langle k_3\rangle^s|\hat \xi_{k_3} |\Bigg\|_{l_{k_3}^{p}}.
\end{align*}
This gives that
\begin{align}\label{711-7}
\big\| M_k\big\|_{l^p_k}
\lesssim
 &\Bigg\|\left[\frac 1 {\langle k-(k_3+k_4)\rangle^{1+\frac {1}{2}\varepsilon_0}}+\frac 1 {\langle k+k_3+k_4\rangle^{1+\frac {1}{2}\varepsilon_0}}\
\right]^{\frac 1p}\langle k_3\rangle^s|\hat \xi_{k_3} |\Bigg\|_{l_{k, k_3}^{p}}\notag\\
\lesssim
 &\Bigg\|\left\|\frac 1 {\langle k-(k_3+k_4)\rangle^{1+\frac {1}{2}\varepsilon_0}}+\frac 1 {\langle k+k_3+k_4\rangle^{1+\frac {1}{2}\varepsilon_0}}\
\right\|_{l_k^1}^{\frac 1p}\langle k_3\rangle^s|\hat \xi_{k_3} |\Bigg\|_{l_{k_3}^{p}}\notag\\
\lesssim &
\|\xi\|_{\hat{b}^{s,p}}.
\end{align}
Insert the estimate \eqref{711-7} into \eqref{711-6}, we have
\begin{align*}
\Big\|\langle k\rangle^{s+\gamma}|I_{2222,k}|\Big\|_{l_k^2(|k|\geq N_0)}
\lesssim&\big\|\langle k\rangle^s|\hat \xi_{k} |\big\|_{l_k^{\infty}}^2\int_0^t\sum_{k_4} \big\| M_k\big\|_{l^p_k}\Big\|\langle k\rangle^{-2+\varepsilon_0+\gamma}\Big\|_{l_k^r} \|h\|_{l^2}
\langle k_4\rangle^s|\hat v_{k_4} |d\rho\\
\lesssim &\|\xi\|_{\hat{b}^{s,\infty}}^2\|\xi\|_{\hat{b}^{s,p}} \int_{0}^t\sum_{k_4}\langle k_4\rangle^s|\hat v_{k_4} |d\rho
\lesssim  t\|\xi\|_{\hat{b}^{s,p}}^3\|v\|_{L_t^{\infty}H^{s+\gamma}_x},
\end{align*}
where in the last step we have used that $\gamma>\frac12$.

{ \emph{3) On $I_{2223,k}$.}} Similarly as treating the term $I_{2222, k}$, we can obtain
\begin{align*}
\Big\|\langle k\rangle^{s+\gamma}|I_{2223,k}|\Big\|_{l_k^2(|k|\geq N_0)}
\lesssim & t\|\xi\|_{\hat{b}^{s,p}}^2\||u|^2u\|_{L_t^{\infty}H_x^{s+\gamma}}
\lesssim  t\|\xi\|_{\hat{b}^{s,p}}^2\|u\|_{L_t^{\infty}H_x^{s+\gamma}}^3.
\end{align*}
Hence, we obtain the estimate on $I_{22,k}$ as follows
\begin{align*}
\Big\|\langle k\rangle^{s+\gamma}|I_{22,k}|\Big\|_{l_k^2(|k|\geq N_0)}\lesssim &
t\big\|\hat \xi_k\big\|_{l^\infty_k}^2\|v\|_{L^\infty_tH^{s+\gamma}_x}^2+ N_0^{-\varepsilon_0}\|\xi\|_{\hat{b}^{s,p}}^2\|v\|_{L^\infty_tH^{s+\gamma}_x}\\
&+t\|\xi\|_{\hat{b}^{s,p}}^3\|v\|_{L^\infty_tH^{s+\gamma}_x}+t\|\xi\|_{\hat{b}^{s,p}}^2\|u\|_{L^\infty_tH^{s+\gamma}_x}^3.
\end{align*}
This completes the estimates of $I_{222}$ and then finishes \cref{subsub1}.

\subsubsection{Estimates on $I_{23,k}$.} The estimation of  this term is very similar as the term $I_{21}$, so we almost repeat the estimates of the latter. Firstly, we  split it into the following two parts:
\begin{align*}
I_{23,k}= & - i\int_0^t\sum\limits_{\substack{k_1+k_2=k\\ \phi(k,k_2)\ne 0,|k_1|\lesssim |k_2|}}\fe^{i\rho k^2}\frac{1}{i\phi(k,k_2)}
\hat \xi_{k_1} \widehat{\big(|u|^2u\big)}_{k_2}\,d\rho\\
&   - i\int_0^t\sum\limits_{\substack{k_1+k_2=k\\ \phi(k,k_2)\ne 0,|k_1|\gg |k_2|}}\fe^{i\rho k^2}\frac{1}{i\phi(k,k_2)}
\hat \xi_{k_1} \widehat{\big(|u|^2u\big)}_{k_2}\,d\rho\\
\triangleq &
I_{231,k}+I_{232,k}.
\end{align*}

$\bullet$ {\bf Estimates on $I_{231,k}$.}  Since
$$
\frac{1}{|\phi(k,k_2)|}
\le  \frac1{2\langle k_1\rangle^2}+\frac1{2\langle k_1+2k_2\rangle^2}.
$$
Hence,   we have that
\begin{align*}
\langle k\rangle^{s+\gamma}|I_{231,k}|\lesssim &\int_0^t \sum\limits_{\substack{k_1+k_2=k\\ \phi(k,k_2)\ne 0,|k_1|\lesssim|k_2| }}\langle k\rangle^{s+\gamma}\frac 1{|\phi(k,k_2)|}\big|\hat \xi_{k_1}\big|\Big|\widehat{\big(|u|^2u\big)}_{k_2}\Big|\,d\rho\\
\lesssim &\int_0^t \sum\limits_{\substack{k_1+k_2=k\\ \phi(k,k_2)\ne 0,|k_1|\lesssim|k_2| }}\langle k_2\rangle^{s+\gamma}\left[\frac {1}{\langle k_1\rangle^2}  +\frac {1}{\langle k_1+2k_2\rangle^2}\right]\big|\hat \xi_{k_1}\big|\Big|\widehat{\big(|u|^2u\big)}_{k_2}\Big|\,d\rho.
\end{align*}
Then treating similarly as $I_{211,k}$ and by Kato-Ponce's inequality, we have that
\begin{align*}
\big\|\langle k\rangle^\gamma I_{231,k}\big\|_{l^2_k}
\lesssim &
\int_0^t \big\|\hat \xi_k\big\|_{l^\infty_k}\big\||u|^2u\big\|_{L^\infty_tH^{s+\gamma}_x}\,d\rho
\lesssim
t \big\|\hat \xi_k\big\|_{l^\infty_k}\|u\|_{L^\infty_tH^{s+\gamma}_x}^3.
\end{align*}

$\bullet$ {\bf Estimates on $I_{232,k}$.}  Under the restriction in the summation, it gives  that
$$|\phi(k,k_2)|\gtrsim  \langle k_1\rangle^2.
$$
Then we have that
\begin{align*}
\langle k\rangle^{s+\gamma}|I_{232,k}|\lesssim
\int_0^t\sum\limits_{\substack{k_1+k_2=k\\ \phi(k,k_2)\ne 0,|k_1|\gg|k_2| }}\langle k_1\rangle^{\gamma+s-2}\big|\hat \xi_{k_1}\big|\Big|\widehat{\big(|u|^2u\big)}_{k_2}\Big|\,d\rho.
\end{align*}
Again, treating similarly as $I_{211}$ and by Kato-Ponce's inequality, we have that
\begin{align*}
\big\|\langle k\rangle^{s+\gamma} I_{232,k}\big\|_{l^2_k}
\lesssim &
\int_0^t \big\|\xi\big\|_{\hat b^{s,p}}\big\||u|^2u\big\|_{L^\infty_tH^{s+\gamma}_x}\,d\rho
\lesssim
t  \big\|\xi\big\|_{\hat b^{s,p}}\|u\|_{L^\infty_tH^{s+\gamma}_x}^3.
\end{align*}
Combining with the estimates on $I_{231}$ and $I_{232}$, we obtain that
\begin{align*}
\big\|\langle k\rangle^{s+\gamma} I_{23,k}\big\|_{l^2_k}
\lesssim &
t  \big\|\xi\big\|_{\hat b^{s,p}}\|u\|_{L^\infty_tH^{s+\gamma}_x}^3.
\end{align*}

Collecting the estimates on $I_1$ and $I_{21}-I_{23}$ above, we have that there exists a constant $C>0$ which depends on $\|\xi\|_{\hat b^{s,p}}$ such that
\begin{align*}
 \big\|\langle k\rangle^{s+\gamma} I_k\big\|_{L^\infty_tl^2_k([0,T]\times \{|k|> N_0\})}
\le &
C\big(T+N_0^{-\frac{\varepsilon_0}{2}})\big(\|v\|_{L^\infty_t  H^{s+\gamma}_x}+\|v\|_{L^\infty_t  H^{s+\gamma}_x}^3\big).
\end{align*}
This together with \eqref{est:low-frq} give that
\begin{align*}
\left\|\Phi^1(u)\right\|_{L^\infty_t H^{s+\gamma}_x([0,T])}
= &
\big\|\langle k\rangle^{s+\gamma} I_k\big\|_{L^\infty_t  l^2_k}\\
\le &
C\Big(TN_0^{s+\gamma}+N_0^{-\frac{\varepsilon_0}{2}}\Big)\big(\|v\|_{L^\infty_t  H^{s+\gamma}_x}+\|v\|_{L^\infty_t  H^{s+\gamma}_x}^3\big).
\end{align*}
This finishes the proof of \cref{lem:Phi-i}.
\end{proof}

\begin{proof}[Proof of \cref{lem:Phi}]
By Kato-Ponce's inequality, we have
\begin{align*}
\left\|i\int_0^t\fe^{-it'\partial_x^2}\Big( |u(t')|^2u(t')\Big)\,dt'\right\|_{L^\infty_t H^{s+\gamma}_x([0,T])}
\le &
CT\|u\|_{L^\infty_t  H^{s+\gamma}_x}^3.
\end{align*}
This combining with \eqref{def:Phi} and  \cref{lem:Phi-i} yields that under the assumption  \eqref{assum},
\begin{align*}
\big\|\Phi(v)\big\|_{L^\infty_t H^{s+\gamma}_x([0,T])}
\le & \|u^0\|_{H^{s+\gamma}_x}+C\big(TN_0^{s+\gamma}+N_0^{-\frac{\varepsilon_0}{2}})\big(R_0+R_0^3\big)\\
\le & \frac12 R_0+C\big(TN_0^{s+\gamma}+N_0^{-\frac{\varepsilon_0}{2}})\big(R_0+R_0^3\big).
\end{align*}
This proves the lemma.
\end{proof}

Now we choosing $N_0$ and $T$ in   \cref{lem:Phi} such that
$$
N_0^{\frac{\varepsilon_0}{2}}= 8\big(1+ R_0^2\big);\quad
T =\big[8 C_0(N_0)\big(1+ R_0^2\big)\big]^{-1},
$$
then we obtain \eqref{est:bounded} and \eqref{est:contraction} with $\theta=\frac34$, and thus complete the proof of   \cref{main:thm1}.
\qed

\subsection{Proof of \cref{main:thm3-smoothdata}}

In the following, we denote  $T$ (using the same notation) to be a positive constant which is smaller than the lifespan obtained in  \cref{main:thm1}.

We denote $\varphi=\partial_tu$, then in order to prove that $u\in C_t^0([0,T]; H^{s+\frac32+\frac1p}(\T))$, it is sufficient to
prove that $\varphi\in C_t^0([0,T]; H^{s-\frac12+\frac1p}(\T))$. Indeed, note that
$$
\partial_{xx} u=-i\varphi-\xi u+|u|^2u.
$$
This gives that
\begin{equation*}
\begin{aligned}
\|u\|_{H^{s+\frac32+\frac1p}}
&\lesssim
\|\varphi\|_{H^{s-\frac12+\frac1p}}+\big\|\xi u\big\|_{H^{s-\frac12+\frac1p}}+\big\||u|^2u\big\|_{H^{s-\frac12+\frac1p}}.
\end{aligned}
\end{equation*}
Note that  $H^{s,p'}\hookrightarrow \hat b^{s,p}$, then from the proof of Theorem \ref{main:thm1}, we have that
\begin{align}\label{est:u-}
\|u\|_{L^\infty_tH^{s+\frac32+\frac1p-}_x([0,T])}
\le 2\|u_0\|_{H^{s+2}}.
\end{align}
By Sobolev embedding $L^{p'}\hookrightarrow H^{-\frac12+\frac1p}$ and \eqref{est:u-}, we further have that
\begin{equation}\label{est:u-psi}
\begin{aligned}
\|u\|_{H^{s+\frac32+\frac1p}}
&\lesssim
\|\varphi\|_{H^{s-\frac12+\frac1p}}+\big\|J^s(\xi u)\big\|_{L^{p'}}+\big\|J^s(|u|^2u)\big\|_{L^{p'}}\\
&\lesssim
\|\varphi\|_{H^{s-\frac12+\frac1p}}
	 + \big\|J^s\xi \big\|_{L^{p'}}\cdot\|u\|_{L^\infty}\\
	 &\quad +\|\xi\|_{L^{p'}}\cdot\big\|J^s u\big\|_{L^{\infty}}+\big\|J^su\big\|_{L^{p'}}\|u\|_{L^\infty}^2\\
&\lesssim
\|\varphi\|_{H^{s-\frac12+\frac1p}}
	 + \big\|J^s\xi \big\|_{L^{p'}}\cdot\|u\|_{H^{s+\frac32+\frac1p-}}\\
	 &\quad +\|\xi\|_{L^{p'}}\cdot\big\|u\big\|_{H^{s+\frac32+\frac1p-}}+\|u\|_{H^{s+\frac32+\frac1p-}}^3\\
&\lesssim
\|\varphi\|_{H^{s-\frac12+\frac1p}}
	 + \big\|\xi \big\|_{H^{s,p'}}\cdot\|u_0\|_{H^{s+2}} +\|u_0\|_{H^{s+2}}^3.
\end{aligned}
\end{equation}
This proves the claim that  $\varphi\in C_t^0([0,T]; H^{s-\frac12+\frac1p}(\T))$ implies $u\in C_t^0([0,T]; H^{s+\frac32+\frac1p}(\T))$.

Moreover, from \eqref{model},  $\varphi $ satisfies the following equation
	\begin{equation}\label{model-ut}
		\left\{\begin{aligned}
			& i\partial_t\varphi+\partial_{xx} \varphi+\xi \varphi=\mathcal{O}(u^2\varphi),
			&&\mbox{for}\,\,\, x\in\T\,\,\,\mbox{and}\,\,\, t\in(0,T] , \\
			&\varphi(0,x)=i(\partial_{xx}u_0+\xi u_0-|u_0|^2u_0)\triangleq \varphi_0, &&\mbox{for}\,\,\,  x\in\T.
		\end{aligned}\right.
	\end{equation}
Here we denote $\mathcal{O}(u^2\varphi)=2|u|^2\varphi+u^2 \bar \varphi$.
From Duhamel's formula, we have
\begin{align}
	\label{solution-v}
	\varphi(t)=\fe^{it\partial_x^2}\varphi_0+i\int_{0}^t\fe^{i(t-\rho)\partial_x^2}\Big(\xi \varphi(\rho)+\mathcal{O}\big(u^2\varphi(\rho)\big)\Big)\,d\rho.
\end{align}
Accordingly, we denote that
$$
\Psi(\varphi)=
\fe^{it\partial_x^2}\varphi_0+i\int_{0}^t\fe^{i(t-\rho)\partial_x^2}\Big(\xi \varphi(\rho)+\mathcal{O}\big(u^2\varphi(\rho)\big)\Big)\,d\rho.
$$
In this proof, we denote $a=\frac12-\frac1p$ for short. Firstly, similarly as \eqref{est:u-psi}, we have
\begin{align}
	\|\varphi_0\|_{H^{-a+s}} \lesssim \|u_0\|_{H^{s-a+2}}
	 + \big\|\xi \big\|_{H^{s,p'}}\cdot\|u_0\|_{H^{s+2}} +\|u_0\|_{H^{s+2}}^3.
\end{align}
Hence, we have $\varphi_0 \in H^{-a+s}.$
Similar as  the proof of   \cref{main:thm1}, the proof of Theorem \ref{main:thm3-smoothdata} is reduced to the following lemma.
\begin{lemma}
There exist a constant $\varepsilon_0$ and a positive function $C(\cdot)$ such that  for any $N_0>0$, any $t\in [0,T]$,
\begin{align}\label{est:Psi}
	\big\|\Psi(\varphi)\big\|_{H^{-a+s}}\leqslant \big\|\varphi_0\big\|_{H^{-a+s}}+\big[tC(N_0)+N_0^{-\varepsilon_0}\big]\cdot\big(\big\|\varphi\big\|_{L^\infty_tH^{-a+s}_x}+\big\|\varphi\big\|^3_{L^\infty_tH^{-a+s}_x}\big).
\end{align}
\end{lemma}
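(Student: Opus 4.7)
The plan is to mirror the fixed-point estimates of \cref{lem:Phi} and \cref{lem:Phi-i} for the new equation \eqref{solution-v}, making three substitutions: the unknown $v$ becomes $\varphi$; the target space $H^{s+\gamma}$ becomes $H^{-a+s}$, which sits exactly two derivatives below $H^{s+\gamma_p}$ since $\gamma_p+a=2$; and the cubic source $|u|^2u$ becomes the linear-in-$\varphi$ expression $\mathcal{O}(u^2\varphi)$ whose coefficients involve the already-controlled $u$. Writing
$$
\Psi(\varphi)=\fe^{it\partial_x^2}\varphi_0+\Psi^1(\varphi)+\Psi^2(\varphi),
$$
with $\Psi^1$, $\Psi^2$ the Duhamel integrals of $\xi\varphi$ and $\mathcal{O}(u^2\varphi)$, the free term contributes exactly $\|\varphi_0\|_{H^{-a+s}}$, so it suffices to bound $\Psi^1(\varphi)$ and $\Psi^2(\varphi)$ in $H^{-a+s}$.

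For $\Psi^1(\varphi)$ I repeat the integration-by-parts blueprint of \cref{lem:Phi-i} at the Fourier level. Cutting frequencies at $|k|=N_0$, the low-frequency piece is handled by Cauchy--Schwarz and absorbed into $tC(N_0)\|\varphi\|_{L^\infty_tH^{-a+s}_x}$ exactly as in \eqref{est:low-frq}. For $|k|>N_0$ I split by the phase $\phi(k,k_2)=k^2-k_2^2$; the resonant part $\phi=0$ (i.e.~$k_2=\pm k$) produces a $t\|\hat\xi\|_{\hat l^\infty}\|\varphi\|_{L^\infty_tH^{-a+s}_x}$ contribution after using the embedding $H^{s,p'}\hookrightarrow \hat l^\infty$. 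The non-resonant part is integrated by parts in time, yielding a boundary piece (analog of $I_{21,k}$), a piece quadratic in $\xi$ (analog of $I_{22,k}$), and a piece in which $\partial_t\varphi=\fe^{-i\rho\partial_x^2}(\xi\varphi+\mathcal{O}(u^2\varphi))$ has been reinserted via \eqref{model-ut} (analog of $I_{23,k}$). Each of these is treated exactly as in \cref{lem:Phi-i}, using the fractional-splitting identities \eqref{phi-2-3}, \eqref{phi-0-3}, \eqref{phi-234}, Schur's test (\cref{lem:schurtest}), and the dichotomy $|k_1|\lesssim|k_2|$ versus $|k_1|\gg|k_2|$. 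The restriction $|k|\ge N_0$ furnishes the $N_0^{-\varepsilon_0}$ factor, and the embedding $H^{s,p'}\hookrightarrow \hat b^{s,p}$ keeps the Schur/$\ell^{p'}$--$\ell^p$ duality sums intact.

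For $\Psi^2(\varphi)$, \cref{main:thm1} combined with $H^{s,p'}\hookrightarrow \hat b^{s,p}$ already supplies $u\in C([0,T];H^{s+\gamma_p-}_x)$. Since $s+\gamma_p-\tfrac12>0$, the product $u^2$ acts as a pointwise multiplier on $H^\sigma$ for every $|\sigma|\le s+\gamma_p-$; specializing to $\sigma=-a+s$ yields
$$
\|\mathcal{O}(u^2\varphi)\|_{H^{-a+s}}\lesssim \|u\|_{H^{s+\gamma_p-}}^2\|\varphi\|_{H^{-a+s}}\le C(\|u_0\|_{H^{s+2}})\|\varphi\|_{H^{-a+s}},
$$
and integrating in time contributes a piece absorbable into $tC(N_0)(\|\varphi\|_{L^\infty_tH^{-a+s}_x}+\|\varphi\|_{L^\infty_tH^{-a+s}_x}^3)$. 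Summing the bounds for $\Psi^1$ and $\Psi^2$ gives \eqref{est:Psi}. The main obstacle is the bookkeeping in the $I_{22}$-analog for $\Psi^1$: every weight $\langle k\rangle^{s+\gamma}$ that appears in the Schur/duality arguments of \cref{lem:Phi-i} must be retuned to $\langle k\rangle^{-a+s}$ while re-verifying that the exponent $r$ with $1/r=1/2-1/p$ still meets the corresponding summability condition. Because we demand strictly lower regularity on the output than in \cref{lem:Phi-i}, the retuning is routine; the only genuinely new feature, the $\mathcal{O}(u^2\varphi)$ term surfacing inside the $I_{23,k}$-analog, is neutralized by the same product estimate used for $\Psi^2$.
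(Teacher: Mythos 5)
Your proposal is correct in outline and, for the potential term $\Psi^1$, follows the same route as the paper: both you and the authors simply re-run the integration-by-parts machinery of \cref{lem:Phi-i} with the weight $\langle k\rangle^{s+\gamma}$ replaced by $\langle k\rangle^{-a+s}$ (the paper explicitly omits these details with the phrase ``arguing similarly as the proof of \cref{lem:Phi-i}''). Where you genuinely diverge is the term $\mathcal{O}(u^2\varphi)$: the paper first disposes of the case $-a+s\ge 0$ by Kato--Ponce and then, for $-a+s<0$, proves the multiplier bound $\|J^{-a+s}(u^2\varphi)\|_{L^2}\lesssim \|u_0\|_{H^{s+2}}^2\|\varphi\|_{H^{-a+s}}$ by hand, via duality, a Littlewood--Paley paraproduct split into $S_1$ ($N\gtrsim M$, handled by Schur's test, \cref{lem:schurtest}) and $S_2$ ($N\ll M$, handled by writing $P_N(u^2P_M\varphi)=P_N(uP_{>M/2}uP_M\varphi)$ and cashing in the extra regularity of $u$ above $H^{\frac12+}$). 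You instead cite the standard Sobolev multiplier fact that $g\in H^{\sigma_0}$ with $\sigma_0>\frac12$ multiplies $H^{\sigma}$ for $|\sigma|\le\sigma_0$; since $|{-a+s}|<\frac12<s+\gamma_p-$, this is a legitimate shortcut (the negative-$\sigma$ case follows from the positive one by duality), and the paper's $S_1$/$S_2$ argument is essentially a self-contained proof of that fact. The one place where your write-up is weaker than you suggest is the claim that retuning \cref{lem:Phi-i} to the exponent $-a+s$ is ``routine because we demand strictly lower regularity'': lowering the output regularity does not uniformly help, because several steps of \cref{lem:Phi-i} (e.g.\ the conclusions of the $I_{2222}$ and $I_{23}$ estimates) bound $\sum_{k}\langle k\rangle^{s}|\hat v_k|$ by $\|v\|_{H^{s+\gamma}}$ using $\gamma>\frac12$, and with $\varphi\in H^{-a+s}$, possibly of negative regularity, $\hat\varphi$ is no longer $\ell^1$; those summations must be reorganized so that the decay is extracted from $\hat\xi$ and the phase factors $\phi^{-1}$ rather than from $\hat\varphi$. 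Since the paper makes exactly the same leap without detail, this is not a gap relative to the paper, but it is the step you should actually verify.
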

\begin{proof}
We split it into the following two parts.

  $\bullet $ {\textbf{Estimate on} $\big\|\int_{0}^t\fe^{i(t-\rho)\partial_x^2}[\xi \varphi(\rho)]\,d\rho\big\|_{H^{-a+\rho}}.$}
Denote $w=e^{-it\partial_x^2}\varphi, $ then it reduces to
\begin{align}
	I\triangleq \int_{0}^t\fe^{-i\rho\partial_x^2}[\xi \cdot \fe^{i\rho\partial_x^2}w(\rho)]\,d\rho.
\end{align}
Since $H^{s,p'}\hookrightarrow \hat b^{s,p}$, arguing similarly as the proof of \cref{lem:Phi-i}, we have that (for which the details are omitted here)
\begin{align}\label{est:I}
	\big\|I\big\|_{H^{-a+s}}
	\lesssim &\big[tC(N_0)+N^{-\varepsilon_0}\big]\cdot\big[\|w\|_{L^\infty_tH^{-a+s}_x}+\|w\|^3_{L^\infty_tH^{-a+s}_x}\big]\notag\\
	=
	&\big[tC(N_0)+N^{-\varepsilon_0}\big]\cdot\big[\|\varphi\|_{L^\infty_tH^{-a+s}_x}+\|\varphi\|^3_{L^\infty_tH^{-a+s}_x}\big].
\end{align}

$\bullet $ {\textbf{Estimate on} $\big\|\int_{0}^t\fe^{i(t-\rho)\partial_x^2}[\mathcal{O}(u^2\varphi)]\,d\rho\big\|_{H^{-a+\rho}}.$}
 If $-a+s\ge 0$,  then by Kato-Ponce's Inequality and \eqref{est:u-},
 \begin{align}\label{est:a-s+0}
 	\Big\|\int_{0}^t\fe^{i(t-\rho)\partial_x^2}[\mathcal{O}(u^2\varphi)]\,d\rho\Big\|_{H^{-a+s}}
 	\lesssim &\int_{0}^t\big\|J^{-a+s}(u^2\varphi)\big\|_{L^2}\,d\rho\notag\\
	\lesssim &\|\varphi\|_{L^\infty_tH^{-a+s}_x}\|u\|_{L^\infty_tH^{2-a+s-}_x}^2\notag \\
	\lesssim & \|u_0\|_{H^{s+2}}^2\|\varphi\|_{L^\infty_tH^{-a+s}_x} .
 \end{align}
If $-a+s<0$,
using dual argument, we note that
\begin{align}\label{dual}
	\big\|J^{-a+s}(u^2\varphi)\big\|_{L^2}=\sup_{h:\|h\|_{l^2}=1}\big\langle J^{-a+s}(u^2\varphi), h\big\rangle.
\end{align}
By H\"{o}lder's inequality, we have
\begin{align}
	&
	\big\langle J^{-a+s}(u^2\varphi), h\big\rangle\nonumber\\
	=&\sum\limits_N\langle P_NJ^{-a+s}(u^2\varphi), P_Nh\big\rangle
	\lesssim
	\sum\limits_N	N^{-a+s}\big\|P_N(u^2\varphi)\big\|_{L^2}\cdot\big\|P_Nh\big\|_{L^2}
	\notag\\
	\lesssim &
	\sum\limits_{N\gtrsim M} N^{-a+s}\big\|P_N(u^2P_M \varphi)\big\|_{L^2}\cdot\big\|P_Nh\big\|_{L^2}
	+\sum\limits_{N\ll M} N^{-a+s}\big\|P_N(u^2P_M \varphi)\big\|_{L^2}\cdot\big\|P_Nh\big\|_{L^2}\notag\\
	\triangleq & S_1+S_2.\label{est:s1-s2}
\end{align}
For $S_1$, by Schur's test in \cref{lem:schurtest} and \eqref{est:u-}, we have
\begin{align}\label{est:s1}
	S_1&=\sum\limits_{N\gtrsim M} N^{-a+s}\big\|P_N(u^2P_M \varphi)\big\|_{L^2}\cdot\big\|P_Nh\big\|_{L^2}
	\notag\\
	&\lesssim
	\sum\limits_{N\gtrsim M} N^{-a+s} \|u\|_{H^{\frac12+}}^2\big\|P_M\varphi\big\|_{L^2}\cdot\big\|P_Nh\big\|_{L^2}
	\notag\\
	&\lesssim
	\|u_0\|_{H^{s+2}}^2\sum\limits_{N\gtrsim M}\left(\frac{M}{N}\right)^{-a+s}\big\|P_M\varphi\big\|_{H^{-a+s}}\big\|P_Nh\big\|_{L^2}
	\notag\\
	&\lesssim
	\|u_0\|_{H^{s+2}}^2\big\|\varphi\big\|_{H^{-a+s}}\big\|h\big\|_{L^2}.
\end{align}
For $S_2$, note that when $N\ll M$,
$
P_N(u^2P_M \varphi)=P_N\big(u P_{>\frac M2}u P_M \varphi\big).
$
Then using H\"{o}lder's inequality and \eqref{est:u-}, we have
\begin{align*}
	S_2
	&=\sum\limits_{N\ll M} N^{-a+s}\big\|P_N(u^2P_M \varphi)\big\|_{L^2}\cdot\big\|P_Nh\big\|_{L^2}
	\\
	&\lesssim
	\sum\limits_{N\ll M} N^{-a+s}\|u\|_{L^\infty}\big\|P_{>\frac{M}{2}}u\big\|_{L^\infty}\big\|P_M\varphi\big\|_{L^2}\big\|P_Nh\big\|_{L^2}
	\\
	&\lesssim
	\sum\limits_{N\ll M}N^{-a+s}\big\|u\big\|_{H^{\frac{1}{2}+}}\big\|P_{>\frac{M}{2}}u\big\|_{H^{\frac{1}{2}+}}\big\|P_M\varphi\big\|_{L^2}\big\|P_Nh\big\|_{L^2}
	\\
	&\lesssim
	\sum\limits_{N\ll M}M^{-2(s+\frac1p)-\frac12+}N^{-a+s}\big\|u\big\|_{H^{\frac{1}{2}+}}\big\|P_{>\frac{M}{2}}u\big\|_{H^{s+\frac1p+\frac32+}}\big\|P_M\varphi\big\|_{H^{-a+s}}\big\|P_Nh\big\|_{L^2}
	\\
	&\lesssim
	\sum\limits_{N\ll M}M^{-2(s+\frac1p)-\frac12+}N^{-a+s}\|u_0\|_{H^{s+2}}^2\big\|\varphi\big\|_{H^{-a+s}}\big\|h\big\|_{L^2}.
\end{align*}
Note that $-2(s+\frac1p)-\frac12<0, -a+s<0$, by Cauchy-Schwartz's inequality, we obtain that 	
\begin{align}
\label{est:s2}
	S_2
	&\lesssim
	\|u_0\|_{H^{s+2}}^2\big\|\varphi\big\|_{H^{-a+s}}\big\|h\big\|_{L^2}.
\end{align}
Inserting  \eqref{est:s1} and \eqref{est:s2} into \eqref{est:s1-s2}, and then by \eqref{dual} we have that
$$
\big\|J^{-a+s}(u^2\varphi)\big\|_{L^2}\lesssim \|u_0\|_{H^{s+2}}^2\big\|\varphi\big\|_{H^{-a+s}}.
$$
This implies that
\begin{align*}
 	\Big\|\int_{0}^t\fe^{-i(t-\rho)\partial_x^2}[\mathcal{O}(u^2\varphi)]\,d\rho\Big\|_{H^{-a+s}}
 	\lesssim &\int_{0}^t\Big\|\fe^{-i(t-\rho)\partial_x^2}[\mathcal{O}(u^2\varphi)]\Big\|_{H^{-a+s}}\,d\rho\notag\\
	\lesssim & t\|u_0\|_{H^{s+2}}^2\|\varphi\|_{L^\infty_tH^{-a+s}_x} .
 \end{align*}
 This together with \eqref{est:a-s+0} and \eqref{est:I}, yields \eqref{est:Psi}.
 \end{proof}

 \vskip1cm

\section{Proof for ill-posedness theory}\label{sec: ill}
In this section, we are going to prove the three ill-posedness results, i.e., \cref{main:thm1-ill}, \cref{main:thm2-ill} and \cref{main:thm3}. Again, we assume $\lambda=-1$ in (\ref{model}) for simplicity. In order to apply the tool \cref{lm:ill-tool}, we consider the following.

Let $f=f(x)$ be a time-independent function. Denote
\begin{align*}
A_2(f)\triangleq &
i\int_0^t\fe^{-i\rho\partial_x^2}\Big(\xi \fe^{i\rho\partial_x^2}f+\big|\fe^{i\rho\partial_x^2}f\big|^2\fe^{i\rho\partial_x^2}f\Big)\,d\rho.
\end{align*}
Taking Fourier transform, we have that for any $k\in \Z$,
\begin{align*}
\widehat{\big(A_2(f)\big)}_k
=&
i\int_0^t\Big(\sum\limits_{k_1+k_2=k}\fe^{i\rho(k^2-k_2^2)}\hat\xi_{k_1} \hat f_{k_2}+\sum\limits_{k_1+k_2+k_3=k}\fe^{i\rho(k^2+k_1^2-k_2^2-k_3^2)} \widehat{(\overline{f})}_{k_1} \hat f_{k_2} \hat f_{k_3}\Big)\,d\rho\\
=&
i\sum\limits_{k_1+k_2=k}\int_0^t\fe^{i\rho(k^2-k_2^2)}\,d\rho\> \hat\xi_{k_1} \hat f_{k_2}\\
& +i \sum\limits_{k_1+k_2+k_3=k}\int_0^t\fe^{i\rho(k^2+k_1^2-k_2^2-k_3^2)} \,d\rho\>\widehat{(\overline{f})}_{k_1} \hat f_{k_2} \hat f_{k_3}.
\end{align*}
The first term is equal to
\begin{align*}
i\sum\limits_{\substack{k_1+k_2=k\\ k^2-k_2^2=0}} \int_0^t\fe^{i\rho(k^2-k_2^2)}\,d\rho\> \hat\xi_{k_1} \hat f_{k_2}
+i\sum\limits_{\substack{k_1+k_2=k\\ k^2-k_2^2\ne 0}} \int_0^t\fe^{i\rho(k^2-k_2^2)}\,d\rho\> \hat\xi_{k_1} \hat f_{k_2}.
\end{align*}
Note that the set
\begin{align*}
&\{(k_1,k_2): k_1+k_2=k, k^2-k_2^2=0\}\\
=&
\{(k_1,k_2): k_1=0, k_2=k\}
\cup
\{(k_1,k_2): k_1=2k, k_2=-k\}\setminus
\{(k_1,k_2): k_1=k_2=0\}.
\end{align*}
We further get that
\begin{align*}
&i\sum\limits_{k_1+k_2=k} \int_0^t\fe^{i\rho(k^2-k_2^2)}\,d\rho\> \hat\xi_{k_1} \hat f_{k_2}\\
=&
it \hat\xi_{0} \hat f_{k}+it \hat\xi_{2k} \hat f_{-k}-it \hat\xi_0 \hat f_0
+i\sum\limits_{\substack{k_1+k_2=k\\ k^2-k_2^2\ne 0}} \frac{1}{i(k^2-k_2^2)}\left(\fe^{it(k^2-k_2^2)}-1\right)\> \hat\xi_{k_1} \hat f_{k_2}.
\end{align*}
This implies that
\begin{align}
\widehat{\big(A_2(f)\big)}_k
=&
it \hat\xi_{0} \hat f_{k}+it \hat\xi_{2k} \hat f_{-k}-it \hat\xi_0 \hat f_0\notag\\
& +i\sum\limits_{\substack{k_1+k_2=k\\ k^2-k_2^2\ne 0}} \frac{1}{i(k^2-k_2^2)}\Big(\fe^{it(k^2-k_2^2)}-1\Big)\> \hat\xi_{k_1} \hat f_{k_2}\notag\\
&  +i \sum\limits_{k_1+k_2+k_3=k}\int_0^t\fe^{i\rho(k^2+k_1^2-k_2^2-k_3^2)} \,d\rho\>\widehat{(\overline{f})}_{k_1} \hat f_{k_2} \hat f_{k_3}.\label{Form-A2}
\end{align}

\subsection{Proof of  \cref{main:thm1-ill}}

\

$\bullet$ {\bf Case 1: $p=\infty$.}
We only need to show the ill-posedness in $H^{s+\frac32}$.
To do this, we define the initial data
\begin{align}\label{def:intialdata-1}
u_0\triangleq \epsilon .
\end{align}
Then
$
\|u_0\|_{H^2}= \epsilon.
$
Moreover, we define
\begin{align}\label{def:potential-1}
\xi\triangleq  1+ \sum\limits_{k\in \Z\setminus \{0\}}|k|^{-s} \fe^{ikx}.
\end{align}
Then
$
\|\xi\|_{\hat b^{s,\infty}}=1.
$
In particular, $\xi=2\pi \delta$ when $s=0$.

 Then by \eqref{Form-A2}, in which the first three terms vanish, we have that
\begin{subequations}
\begin{align}
\widehat{\big(A_2(u_0)\big)}_k
=&
i\sum\limits_{k_1+k_2=k} \frac{1}{i(k^2-k_2^2)}\Big(\fe^{it(k^2-k_2^2)}-1\Big)\> \hat\xi_{k_1} \widehat{(u_0)}_{k_2}\label{A2u-1}\\
&\quad +i \sum\limits_{k_1+k_2+k_3=k}\int_0^t\fe^{i\rho(k^2+k_1^2-k_2^2-k_3^2)} \,d\rho\>\widehat{(\overline{u_0})}_{k_1} \widehat{(u_0)}_{k_2} \widehat{(u_0)}_{k_3}. \label{A2u-2}
\end{align}
\end{subequations}

For \eqref{A2u-1}, according to the definition of $u_0$ and $\xi$ we have
\begin{align}\label{A2u-1-1}
\eqref{A2u-1}
=&
\epsilon\frac{1}{k^{2+s}}\Big(\fe^{itk^2}-1\Big).
\end{align}
Now we denote the set
$$
\mathbb K\triangleq \big\{M_0(2j+1): j\in\Z^+, 1\le j\le M_1\big\},
$$
where $M_0,M_1\ge 10$ are some large constants determined later.
Then for $k\in \mathbb K$, we have
\begin{align*}
k^2
=&
M_0^2(4j^2+4j+1).
\end{align*}
Taking
\begin{align}\label{def:time-1}
t\triangleq \frac\pi2 M_0^{-2},
\end{align}
then
$
 tk^2= 2(j^2+j)\pi+\frac\pi2.
$
This yields that
$
\fe^{itk^2}=i.
$
This further gives \eqref{A2u-1-1} that for any $k\in \mathbb K$,
\begin{align}\label{A2u-1-2}
\eqref{A2u-1}
=&
\epsilon(i-1) \frac{1}{k^{2+s}}.
\end{align}
Now we consider the estimate  on \eqref{A2u-1}. By \eqref{A2u-1-2}, it reads
\begin{align*}
\big\|(1+|k|^2)^{\frac12(s+\frac32)} \eqref{A2u-1}\big\|_{l^2_k}^2
\ge &
\big\|(1+|k|^2)^{\frac12(s+\frac32)} \eqref{A2u-1}\big\|_{l^2_k(\mathbb K)}^2 \\
=&\epsilon^2 \sum\limits_{k\in \mathbb K}(1+|k|^2)^{s+\frac32} \Big|(i-1) \frac{1}{k^{2+s}}\Big|^2\\
=&2 \epsilon^2 \sum\limits_{k\in \mathbb K}(1+|k|^2)^{s+\frac32} k^{-(4+2s)}.
\end{align*}
Note that in $\mathbb K$,
$$
(1+|k|^2)^{s+\frac32} k^{-(4+2s)}\ge |k|^{-1}.
$$
Therefore,
we  get  that
\begin{align*}
\big\|(1+|k|^2)^{\frac12(s+\frac32)} \eqref{A2u-1}\big\|_{l^2_k}^2
\ge
&2\epsilon^2 \sum\limits_{k\in \mathbb K}|k|^{-1}
=
2\epsilon^2M_0^{-1} \sum\limits_{j=1}^{M_1}(2j+1)^{-1}
\ge
2\epsilon^2M_0^{-1} \ln (2M_1+1).
\end{align*}

For \eqref{A2u-2}, by Cauchy-Schwartz's inequality, we have
\begin{align}\label{est:A2u-2}
&\big\|(1+|k|^2)^{\frac12(s+\frac32)} \eqref{A2u-2}\big\|_{l^2_k}^2\notag\\
=
&\sum\limits_{k}(1+|k|^2)^{s+\frac32}\Big|\sum\limits_{k_1+k_2+k_3=k}\int_0^t\fe^{i\rho(k^2+k_1^2-k_2^2-k_3^2)} \,d\rho\>\widehat{(\overline{u_0})}_{k_1} \widehat{(u_0)}_{k_2} \widehat{(u_0)}_{k_3}\Big|^2\notag\\
= &
t^2\epsilon^6.
\end{align}

Together with the estimates on \eqref{A2u-1} and \eqref{A2u-2} above, we obtain
\begin{align*}
\big\|A_2(u_0)\big\|_{H^{s+\frac32}}=&
\Big\|(1+|k|^2)^{\frac12(s+\frac32)}\widehat{\big(A_2(u_0)\big)}_k\Big\|_{l^2_k}\\
\ge &
\sqrt2\epsilon M_0^{-\frac12} \sqrt{\ln (2M_1+1)}
-t\epsilon^3\\
= &
\sqrt2\epsilon M_0^{-\frac12} \sqrt{\ln (2M_1+1)}
-\frac\pi 2M_0^{-2}\epsilon^3
\to  +\infty,\quad \mbox{when } M_1\to +\infty.
\end{align*}
Then by Lemma \ref{lm:ill-tool}, we obtain the ill-posedness in $H^{s+\frac32}$, and thus finish the proof of Theorem \ref{main:thm1-ill} in the case of $p=\infty$.

\noindent $\bullet$ {\bf Case 2: $2<p<\infty$.}
As before, we define the initial data
\begin{align*}
u_0\triangleq \epsilon .
\end{align*}
Now, we define
\begin{align}
\xi\triangleq 1+ \sum\limits_{k\in \Z\setminus\{0\}}\frac1{|k|^{s+\frac1p}(\ln |k|)^\alpha} \fe^{ikx},
\end{align}
where $\alpha>\frac1p$ will be decided later.
Then
$$
\|\xi\|_{\hat b^{s,p}}\lesssim 1.
$$
Therefore, the same estimates as \eqref{A2u-1-2} give that
\begin{align*}
\big\|(1+|k|^2)^{\frac12(s+\frac32+\frac1p)} \eqref{A2u-1}\big\|_{l^2_k}^2
\ge &
\big\|(1+|k|^2)^{\frac12(s+\frac32+\frac1p)} \eqref{A2u-1}\big\|_{l^2_k(\mathbb K)}^2 \\
=&2 \epsilon^2 \sum\limits_{k\in \mathbb K}(1+|k|^2)^{s+\frac32+\frac1p} k^{-(4+2s+\frac2p)}(\ln |k|)^{-2\alpha}\\
\ge &2 \epsilon^2 \sum\limits_{k=1}^{M_1}k^{-1}(\ln |k|)^{-2\alpha}.
\end{align*}
This together with \eqref{est:A2u-2} yields that
\begin{align*}
\big\|A_2(u_0)\big\|_{H^{s+\frac32+\frac1p}}=&
\Big\|(1+|k|^2)^{\frac12(s+\frac32+\frac1p)}\widehat{\big(A_2(u_0)\big)}_k\Big\|_{l^2_k}\\
\ge &
\sqrt2 \epsilon \Big(\sum\limits_{k=1}^{M_1}k^{-1}(\ln k)^{-2\alpha}\Big)^\frac12-\frac\pi 2M_0^{-2}\epsilon^3.
\end{align*}
Choosing $\frac1p<\alpha<\frac12$, then
$$
\sum\limits_{k=1}^{M_1}k^{-1}(\ln k)^{-2\alpha}\to   +\infty,\quad \mbox{when } M_1\to +\infty.
$$
Then the estimates above give that
\begin{align*}
\big\|A_2(u_0)\big\|_{H^{s+\frac32+\frac1p}}
\to & +\infty,\quad \mbox{when } M_1\to +\infty.
\end{align*}
The proof is done by applying \cref{lm:ill-tool}. \qed

\subsection{Proof of  \cref{main:thm2-ill}}

As in the proof of  \cref{main:thm1-ill}, we define the initial data
\begin{align*}
u_0\triangleq \epsilon,
\end{align*}
and
\begin{align}
\xi\triangleq 1+ \sum\limits_{k\in \Z\setminus\{0\}}\frac1{|k|^{s+\beta}} \fe^{ikx},
\end{align}
where $\beta>\frac12$ will be decided later.
Then
$
\|\xi\|_{H^s}\lesssim 1.
$
Given $\gamma>2$, arguing similarly as above, we have that
\begin{align*}
\big\|(1+|k|^2)^{\frac12(s+\gamma)} \eqref{A2u-1}\big\|_{l^2_k}^2
\ge &
\big\|(1+|k|^2)^{\frac12(s+\gamma)} \eqref{A2u-1}\big\|_{l^2_k(\mathbb K)}^2 \\
=&2 \epsilon^2 \sum\limits_{k\in \mathbb K}(1+|k|^2)^{s+\gamma} |k|^{-(4+2s+2\beta)}
\ge 2 \epsilon^2 \sum\limits_{k=1}^{M_1}|k|^{2(\gamma-2)-2\beta}.
\end{align*}
Choosing $\beta$ such that $2\beta-1<2(\gamma-2)$, then
\begin{align*}
\big\|(1+|k|^2)^{\frac12(s+\gamma)} \eqref{A2u-1}\big\|_{l^2_k}^2
\ge &2 \epsilon^2 M_1^{2(\gamma-2)-2\beta+1}.
\end{align*}
This together with \eqref{est:A2u-2}  yields that
\begin{align*}
\big\|A_2(u_0)\big\|_{H^{s+\frac32+\frac1p}}=&
\Big\|(1+|k|^2)^{\frac12(s+\frac32+\frac1p)}\widehat{\big(A_2(u_0)\big)}_k\Big\|_{l^2_k}\\
\ge &
\sqrt2 \epsilon M_1^{\gamma-2-\beta+\frac12}-\frac\pi 2M_0^{-2}\epsilon^3
\to  +\infty,\quad \mbox{when } M_1\to +\infty.
\end{align*}
The proof is done by applying \cref{lm:ill-tool}.

\subsection{Proof of \cref{main:thm3}}
Note that the ill-posedness in $H^{\gamma}$ for $\gamma\ge \frac32$ has been essentially presented in \cref{main:thm1-ill}, so here we only need to further consider $\gamma<\frac32$.

Define the initial data
\begin{align}\label{def:intialdata-2}
u_0\triangleq  \epsilon\sum\limits_{k:|k-N|\le 10} N^{-\gamma} \fe^{ikx},
\end{align}
then
$$
\|u_0\|_{H^{\gamma}}\sim \epsilon.
$$
Define the potential
\begin{align}\label{def:potential-2}
\xi\triangleq  \sum\limits_{k:|k|> 10} \ln(|k|) \> \fe^{ikx},
\end{align}
then for any $\epsilon>0$,
$$
 \big\|\hat \xi_k\big\|_{\hat b^{-\epsilon, \infty}}<+\infty,\quad \mbox{and}\quad
 \big\|\hat \xi_k\big\|_{l^\infty_k}=+\infty.
$$

By \eqref{Form-A2}, we have that
\begin{align*}
\widehat{\big(A_2(u_0)\big)}_k
=&
it \hat\xi_{0} \widehat{(u_0)}_{k}+it \hat\xi_{2k} \widehat{(u_0)}_{-k}-it \hat\xi_0 \widehat{(u_0)}_0\\
&\quad+i\sum\limits_{\substack{k_1+k_2=k\\ k^2-k_2^2\ne 0}} \frac{1}{i(k^2-k_2^2)}\Big(\fe^{it(k^2-k_2^2)}-1\Big)\> \hat\xi_{k_1} \widehat{(u_0)}_{k_2} \\
&\quad +i \sum\limits_{k_1+k_2+k_3=k}\int_0^t\fe^{i\rho(k^2+k_1^2-k_2^2-k_3^2)} \,d\rho\>\widehat{(\overline{u_0})}_{k_1} \widehat{(u_0)}_{k_2} \widehat{(u_0)}_{k_3}.
\end{align*}
According to the definition of $\xi$ in \eqref{def:potential-2}, we have $\hat \xi_0=0$ and so
\begin{subequations}
 \begin{align}
\widehat{\big(A_2(u_0)\big)}_k
=&
it \hat\xi_{2k} \widehat{(u_0)}_{-k}\label{A2-u0-1}\\
&+i\sum\limits_{\substack{k_1+k_2=k\\ k^2-k_2^2\ne 0}} \frac{1}{i(k^2-k_2^2)}\Big(\fe^{it(k^2-k_2^2)}-1\Big)\> \hat\xi_{k_1} \widehat{(u_0)}_{k_2}\label{A2-u0-2}\\
& +i \sum\limits_{k_1+k_2+k_3=k}\int_0^t\fe^{i\rho(k^2+k_1^2-k_2^2-k_3^2)} \,d\rho\>\widehat{(\overline{u_0})}_{k_1} \widehat{(u_0)}_{k_2} \widehat{(u_0)}_{k_3}. \label{A2-u0-3}
\end{align}
\end{subequations}

$\bullet$ {\bf Lower  and upper bounds on  \eqref{A2-u0-1}.}
By \eqref{def:intialdata-2} and \eqref{def:potential-2}, we have that
$$
\widehat{u_0}_k=N^{-\gamma}, \mbox{ for } |k-N|\le 10;\quad
\widehat{u_0}_k=0, \mbox{ for } |k-N|> 10;
$$
and
$$
\hat{\xi}_k=\ln(|k|), \mbox{ for } |k|> 10.
$$
This gives  
\begin{align*}
\big\|\langle k\rangle^{\gamma} \eqref{A2-u0-1}\big\|_{l^2_k}^2
=&
t^2\sum\limits_{k} \langle k\rangle^{2\gamma}
 \big|\hat\xi_{2k}\big|^2 \big| \widehat{(u_0)}_{-k}\big|^2\\
=& t^2\sum\limits_{k:|k-N|\le 10} \langle k\rangle^{2\gamma}
\ln(|k|)^2 N^{-2\gamma}.
\end{align*}
By choosing $N$ large enough, we further get that
\begin{align}\label{est:A-u0-1}
\frac12 t^2 (\ln N)^2\le \big\|\langle k\rangle^{\gamma} \eqref{A2-u0-1}\big\|_{l^2_k}^2
\le &
20 t^2 (\ln N)^2.
\end{align}

$\bullet$ {\bf Upper bound on  \eqref{A2-u0-2}.}
We split \eqref{A2-u0-2} into the following two parts:
\begin{subequations}
\begin{align}
\eqref{A2-u0-2}
=&
i\sum\limits_{\substack{k_1+k_2=k\\ k^2-k_2^2\ne 0,|k_2|\gtrsim |k|}} \frac{1}{i(k^2-k_2^2)}\Big(\fe^{it(k^2-k_2^2)}-1\Big)\> \hat\xi_{k_1} \widehat{(u_0)}_{k_2}\label{A2-u0-2-1}\\
&+i\sum\limits_{\substack{k_1+k_2=k\\ k^2-k_2^2\ne 0,|k_2|\ll |k|}} \frac{1}{i(k^2-k_2^2)}\Big(\fe^{it(k^2-k_2^2)}-1\Big)\> \hat\xi_{k_1} \widehat{(u_0)}_{k_2}. \label{A2-u0-2-2}
\end{align}
\end{subequations}

For \eqref{A2-u0-2-1}, we first note that if $k^2-k_2^2\ne 0$, then
\begin{align}\label{phs-2}
\big|k^2-k_2^2\big|\ge |k|.
\end{align}
Indeed, $\big|k^2-k_2^2\big|=|k+k_2||k-k_2|$.
If $k\cdot k_2\ge 0$, then $|k+k_2|\ge |k|$ and $|k-k_2|\ge 1$, thus we have \eqref{phs-2}.
If $k\cdot k_2< 0$, then $|k-k_2|\ge |k|$ and $|k+k_2|\ge 1$, thus we also have \eqref{phs-2}.
Moreover, \eqref{phs-2} combining with the trivial bound
$$
\big|k^2-k_2^2\big|=|k+k_2||k-k_2|\ge |k-k_2|,
$$
give  
\begin{align}\label{phs-3}
\big|k^2-k_2^2\big|\ge |k|^\frac34|k-k_2|^\frac14.
\end{align}

Therefore, by  \eqref{def:intialdata-2}, \eqref{def:potential-2} and \eqref{phs-3}, we have that
\begin{align*}
\langle k\rangle^{\gamma} \big| \eqref{A2-u0-2-1}\big|
=&
\Bigg|\langle k\rangle^{\gamma}\sum\limits_{\substack{k_1+k_2=k\\ k^2-k_2^2\ne 0,|k_2|\gtrsim |k|\\|k_2-N|\le 10}} \frac{1}{i(k^2-k_2^2)}\Big(\fe^{it(k^2-k_2^2)}-1\Big)\> \ln(|k_1|) N^{-\gamma}\Bigg|\\
\le &
\Bigg|\sum\limits_{\substack{k_1+k_2=k\\ k^2-k_2^2\ne 0,|k_2|\gtrsim |k|\\|k_2-N|\le 10}} \frac{1}{i(k^2-k_2^2)}\Big(\fe^{it(k^2-k_2^2)}-1\Big)\> \ln(|k_1|) \Bigg|\\
\le &
|k|^{-\frac34}\sum\limits_{k_2: |k_2-N|\le 10} \ln(|k-k_2|)|k-k_2|^{-\frac14}
\lesssim
|k|^{-\frac34}.
\end{align*}
This implies  
\begin{align}\label{est:A2-u0-2-1}
\big\|\langle k\rangle^{\gamma}  \eqref{A2-u0-2-1}\big\|_{l^2_k}
\lesssim
\big\||k|^{-\frac34}\big\|_{l^2_k}
\lesssim
1.
\end{align}

For \eqref{A2-u0-2-2}, under the restriction in the summation, we have
$
|k^2-k_2^2|\gtrsim |k|^2
$
and $|k|\sim |k_1|$. Therefore,
\begin{align*}
\langle k\rangle^{\gamma}  \big|\eqref{A2-u0-2-2}\big|
=&
\Bigg|\langle k\rangle^{\gamma}\sum\limits_{\substack{k_1+k_2=k\\ k^2-k_2^2\ne 0,|k_2|\ll |k|\\|k_2-N|\le 10}} \frac{1}{i(k^2-k_2^2)}\Big(\fe^{it(k^2-k_2^2)}-1\Big)\> \ln(|k_1|) N^{-\gamma}\Bigg|\\
\lesssim  &
N^{-\gamma} |k|^{\gamma-2}\ln(|k|) \sum\limits_{k_2: |k_2-N|\le 10} 1
\lesssim
N^{-\gamma} |k|^{\gamma-2}\ln(|k|) .
\end{align*}
Since $\gamma<\frac32$, it yields
\begin{align}\label{est:A2-u0-2-2}
\big\|\langle k\rangle^{\gamma}  \eqref{A2-u0-2-2}\big\|_{l^2_k}
\lesssim
N^{-\gamma}  \big\||k|^{\gamma-2}\ln(|k|) \big\|_{l^2_k\{k\ne 0\}}
\lesssim
N^{-\gamma}.
\end{align}

Together with the estimates on \eqref{est:A2-u0-2-1} and \eqref{est:A2-u0-2-2}, we obtain  
\begin{align}\label{est:A2-u0-2}
\big\|\langle k\rangle^{\gamma} \eqref{A2-u0-2}\big\|_{l^2_k}
\lesssim
\max\big\{1,N^{-\gamma}\big\}.
\end{align}

$\bullet$ {\bf Lower  and upper  bounds on  \eqref{A2-u0-3}.}
By the definition \eqref{def:intialdata-2}, we have
 \begin{align*}
 \eqref{A2-u0-3}
 = &
 i N^{-3\gamma} \sum\limits_{\substack{k_1+k_2+k_3=k\\ |k_1+N|\le 10, |k_j-N|\le 10, j=2,3}}\int_0^t\fe^{i\rho(k^2+k_1^2-k_2^2-k_3^2)} \,d\rho.
\end{align*}
Note that in the restriction: $k_1+k_2+k_3=k, |k_1+N|\le 10, |k_j-N|\le 10, j=2,3$, we have  
$$
\big|k^2+k_1^2-k_2^2-k_3^2\big|
= \big|k_1+k_2\big|\big|k_1+k_3\big|\le 100.
$$
Setting $t\in (0,10^{-3})$, then for any $\rho\in (0,t)$,
$$
\Big|\fe^{i\rho(k^2+k_1^2-k_2^2-k_3^2)} -1\Big|\le \frac12.
$$
This gives that
 \begin{align*}
\frac12 t N^{-3\gamma}\sum\limits_{\substack{k_1+k_2+k_3=k\\ |k_1+N|\le 10, |k_j-N|\le 10, j=2,3}} 1
\le
\big|\eqref{A2-u0-3}\big|
\le 2
t N^{-3\gamma}\sum\limits_{\substack{k_1+k_2+k_3=k\\ |k_1+N|\le 10, |k_j-N|\le 10, j=2,3}} 1.
\end{align*}

For the lower bound, we use the embedding $l^2\hookrightarrow l^\infty$ and obtain that
 \begin{align*}
\big\|\langle k\rangle^{\gamma}\eqref{A2-u0-3}\big\|_{l^2_k}
\ge  &
\big\|\langle k\rangle^{\gamma}\eqref{A2-u0-3}\big\|_{l^\infty_k}\\
\ge &
\frac12 t N^{-3\gamma} \left\|\langle k\rangle^{\gamma} \sum\limits_{\substack{k_1+k_2+k_3=k\\ |k_1+N|\le 10, |k_j-N|\le 10, j=2,3}} 1\right\|_{l^\infty_k}.
\end{align*}
Note that
$$
\Big\{k: k=k_1+k_2+k_3, |k_1+N|\le 10, |k_j-N|\le 10, j=2,3\Big\}
\subset
\Big\{k: |k-N|\le 30 \Big\}.
$$
Choosing $N$ large enough, we further have  
 \begin{align}\label{est:A2-u0-3-low}
\big\|\langle k\rangle^{\gamma}\eqref{A2-u0-3}\big\|_{l^2_k}
\ge &
\frac14 t N^{-2\gamma} \left\| \sum\limits_{\substack{k_1+k_2+k_3=k\\ |k_1+N|\le 10, |k_j-N|\le 10, j=2,3}} 1\right\|_{l^\infty_k}\notag\\
\ge &
5 t N^{-2\gamma}.
\end{align}

For the upper bound, we use the embedding $l^1\hookrightarrow l^2$ and obtain that
 \begin{align}\label{est:A2-u0-3-upp}
\big\|\langle k\rangle^{\gamma}\eqref{A2-u0-3}\big\|_{l^2_k}
\le &
\big\|\langle k\rangle^{\gamma}\eqref{A2-u0-3}\big\|_{l^1_k}\notag\\
\le &
2t N^{-3\gamma} \left\|\langle k\rangle^{\gamma} \sum\limits_{\substack{k_1+k_2+k_3=k\\ |k_1+N|\le 10, |k_j-N|\le 10, j=2,3}} 1 \right\|_{l^2_k(\{|k-N|\le 30\})}\notag\\
\le &
4t N^{-2\gamma}  \sum\limits_{\substack{k_1, k_2, k_3\\ |k_1+N|\le 10, |k_j-N|\le 10, j=2,3}} 1 \notag\\
\le  &
120 t N^{-2\gamma}.
\end{align}
Together with \eqref{est:A2-u0-3-low} and \eqref{est:A2-u0-3-upp}, we obtain 
 \begin{align}\label{est:A2-u0-3}
5 t N^{-2\gamma}
\le \big\|\langle k\rangle^{\gamma}\eqref{A2-u0-3}\big\|_{l^2_k}
\le   &
120 t N^{-2\gamma}.
\end{align}

Now we collect the estimates in \eqref{est:A-u0-1}, \eqref{est:A2-u0-2} and \eqref{est:A2-u0-3}, and choose $N$ large enough to obtain that for $\gamma\ge 0$,
 \begin{align*}
  \big\|A_2(u_0)\big\|_{H^{\gamma}}
=& \left\|\langle k\rangle^{\gamma}\widehat{\big(A_2(u_0)\big)}_k\right\|_{l^2_k}
\ge
\frac{\sqrt2}2 t \ln N-
C\big(tN^{-2\gamma}+1\big)
\ge
\frac12 t \ln N-C;
\end{align*}
for $\gamma<0$,
\begin{align*}
  \big\|A_2(u_0)\big\|_{H^{\gamma}}
=& \left\|\langle k\rangle^{\gamma}\widehat{\big(A_2(u_0)\big)}_k\right\|_{l^2_k}
\ge
5 tN^{-2\gamma}-
C\big(t \ln N+N^{-\gamma}\big)
\ge
tN^{-2\gamma}-N^{-\gamma}.
\end{align*}
By choosing $t=10^{-3}$, for either $\gamma\ge0$ or $\gamma<0$, we can have  
\begin{align*}
\big\|A_2(u_0)\big\|_{H^{\gamma}}
\to & +\infty,\quad \mbox{when } N\to +\infty.
\end{align*}
The proof is done by again applying \cref{lm:ill-tool}.\qed

\vskip1cm

\section{Numerical method and convergence analysis}\label{sec: num method}
In this section, we shall first give the derivation of the presented LRI scheme \eqref{NuSo-NLS}, and then we shall prove its convergence result, i.e., \cref{thm:main-N1}.

\subsection{Construction of numerical scheme}\label{section:derivation}

By the Duhamel formula of \eqref{model} and the  variable $v(t)=\fe^{-it\partial_x^2}u(t)$, we have that for any $n\geq0$,
\begin{align}\label{Duhamel-2}
v(t_{n+1})=v(t_n)+i\int_{t_n}^{t_{n+1}}\fe^{-i\rho\partial_x^2}\Big(\xi u(\rho)-\lambda|u(\rho)|^2u(\rho)\Big)\,d\rho.
\end{align}
Then we write
\begin{align}
v(t_{n+1})
=& i\int_{t_n}^{t_{n+1}}\fe^{-i\rho\partial_x^2}\Big(\xi\> \fe^{i\rho\partial_x^2} v(t_n)\Big)\,d\rho\label{T1}\\
& + v(t_n)-i\lambda\int_{t_n}^{t_{n+1}}\fe^{-i\rho\partial_x^2}\Big(|u(\rho)|^2u(\rho)\Big)\,d\rho \label{T2}\\
& + R_1^n,\notag
\end{align}
where
\begin{align*}
R_1^n\triangleq
i\int_{t_n}^{t_{n+1}}\fe^{-i\rho\partial_x^2}\Big[\xi\> \fe^{i\rho\partial_x^2}\big(v(\rho)-v(t_n)\big)\Big]\,d\rho\quad n\geq0.
\end{align*}

We first consider the integral term \eqref{T1} involving the potential. By taking the Fourier transform, we have
\begin{align*}  
\widehat{\eqref{T1}}_k=&
i\int_{t_n}^{t_{n+1}}\sum\limits_{k_1+k_2=k}\fe^{i\rho(k^2-k_2^2)}\hat \xi_{k_1}
\hat v_{k_2}(t_n)\,d\rho\\
=&i\tau\fe^{it_nk^2}\sum\limits_{k_1+k_2=k}\mathcal M_\tau\left(\fe^{i\rho(k^2-k_2^2)}\right)\hat\xi_{k_1}
\fe^{-it_nk_2^2}\hat v_{k_2}(t_n),
\end{align*}
where $\mathcal M_\tau$ is the average operator given in \eqref{time average operator}.
Then we consider the approximation $\mathcal M_\tau\left(\fe^{i\rho(k^2-k_2^2)}\right)\approx \mathcal M_\tau\left(\fe^{i\rho k^2}\right)\mathcal M_\tau\left(\fe^{-i\rho k_2^2}\right)$, and note that
\begin{align*}
&\sum_{k}\fe^{ikx}\fe^{it_nk^2}\sum\limits_{k_1+k_2=k}\mathcal M_\tau\left(\fe^{i\rho k^2}\right)\mathcal M_\tau\left(\fe^{-i\rho k_2^2}\right)\hat\xi_{k_1}
\fe^{-it_nk_2^2}\hat v_{k_2}(t_n)\\
=&\fe^{-it_n\partial_x^2} \mathcal D_{-\tau}\Big[\xi\>
\mathcal D_\tau[\fe^{it_n\partial_x^2}v(t_n)]\Big]=\fe^{-it_n\partial_x^2} \mathcal D_{-\tau}\Big[\xi\>
\mathcal D_\tau[u(t_n)]\Big].
\end{align*}
So we can write the integral term \eqref{T1} as
\begin{align}\label{T1:inter}
\eqref{T1}
=i\tau \fe^{-it_n\partial_x^2} \mathcal D_{-\tau}\Big[\xi\>
\mathcal D_\tau[u(t_n)]\Big]
+R^n_2,
\end{align}
where $R^n_2$ is the truncation term defined by
$$
\widehat{(R^n_2)}_k\triangleq
i\tau \fe^{it_nk^2}\sum\limits_{k_1+k_2=k}\left[ \mathcal M_\tau\left(\fe^{i\rho(k^2-k_2^2)}\right)- \mathcal M_\tau\left(\fe^{i\rho k^2}\right)\mathcal M_\tau\left(\fe^{-i\rho k_2^2}\right)\right]\hat \xi_{k_1}\fe^{-it_nk_2^2}
\hat v_{k_2}(t_n),
$$
for $n\geq0$.
For the term \eqref{T2}, we simply use the Lie-Trotter splitting method and we denote the remainder as
\begin{align}\label{Apprx:T2}
R^n_3 \triangleq \eqref{T2} -\fe^{-it_{n+1}\partial_x^2}\mathcal N_\tau\big[\fe^{i\tau\partial_x^2}u(t_n)\big],\quad n\geq0.
\end{align}

Plugging \eqref{T1:inter}, \eqref{Apprx:T2} back to \eqref{T1} and \eqref{T2} without the remainder terms $R^n_2,R^n_3$, and denoting $v^n\approx v(t_n)$ yield:
\begin{align}\label{numerical:vn}
v^{n+1}=i\tau \fe^{-it_n\partial_x^2} \mathcal D_{-\tau}\Big[\xi\>
\mathcal D_\tau\big[\fe^{it_n\partial_x^2}v^n\big]\Big]
+\fe^{-it_{n+1}\partial_x^2}\mathcal N_\tau\big[\fe^{i\tau\partial_x^2}\fe^{it_n\partial_x^2}v^n\big].
\end{align}
Therefore, by letting $u(t_n)\approx u^n=\fe^{it_n\partial_x^2}v^n$ with $u^0=v^0=u_0$ and noting $\fe^{i\tau\partial_x^2}\mathcal{D}_{-\tau}
=\mathcal{D}_{\tau}$, we obtain our LRI scheme presented in (\ref{NuSo-NLS}):
$$
u^{n+1}=i\tau  \mathcal D_\tau\Big[\xi\>
\mathcal D_\tau[u^n]\Big]
+\mathcal N_\tau\big[\fe^{i\tau\partial_x^2} u^n\big],\quad n\geq0.
$$
Note that the filter $P_{\leq N}$ is an additional approximation applied in  $\mathcal N_\tau$ to cut off the high frequencies. We remark that such approximation is mainly used for the technical analysis of the convergence under $L^2$-norm \cite{Ignat,Zuazua,JEMS}. It will not essentially affect the computational results in practice, and it could be omitted which will be addressed in a future work.

\subsection{Convergence analysis}
In the rest of this section, we assume $\lambda=-1$ in (\ref{model}) for simplicity of notations, and we aim to prove the error estimate given in \cref{thm:main-N1}.

Denote the part of the scheme \eqref{numerical:vn} involving the integration of the potential as
$$
\Phi(f)\triangleq  f+i\tau \fe^{-it_n\partial_x^2}  \mathcal D_{-\tau}\Big[\xi\>
\mathcal D_\tau[\fe^{it_n\partial_x^2}  f]\Big],
$$
and denote the splitting part involving the nonlinearity as
$$
 \Psi(f)\triangleq \fe^{-it_{n+1}\partial_x^2} \Big(\mathcal N_\tau\big[\fe^{i\tau\partial_x^2}\fe^{it_n\partial_x^2} f\big]-\fe^{i\tau\partial_x^2}\fe^{it_n\partial_x^2} f\Big).
$$
Then  \eqref{numerical:vn} reads
$$
v^{n+1}=\Phi\big(v^n\big)
+\Psi\big(v^n\big).
$$
Denote the error function as
$$
h^n\triangleq v^n-v(t_n),\quad n\geq0,
$$
so $h^0\equiv0$ and we have
\begin{align*}
h^{n+1}=\Phi\big(v^n\big)-\Phi\big(v(t_n)\big)+\Psi\big(v^n\big)-\Psi\big(v(t_n)\big)
+  R^n_1 +R^n_2 + R^n_3.
\end{align*}
We shall first analyze to give the stability result of the scheme (\ref{numerical:vn}) by working on the $\Phi$ part and the $\Psi$ part in a sequel. Afterwards, we shall estimate the local truncation errors $R^n_1,R^n_2,R^n_3$.

\begin{lemma}[Stability of potential part]\label{lem:Phi-stab}
 Let $\xi\in \hat l^\infty$, then for any $f\in H^{\frac12+}(\T)$ and $h=f-g\in L^2(\T)$,
\begin{align}
\big\|\Phi(f)-\Phi(g)\big\|_{L^2}
\le  (1+C\tau )\|h\|_{L^2},
\end{align}
where the constant $C>0$ depends only on $\|\xi\|_{\hat{l}^{\infty}}$.
\end{lemma}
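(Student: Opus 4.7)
The plan is to exploit the ``inner-product structure'' of the approximation $\mathcal D_{-\tau}\bigl[\xi\cdot\mathcal D_\tau[\,\cdot\,]\bigr]$ highlighted in the introduction: this operator is \emph{self-adjoint} on $L^2(\T)$, and that forces the $\mathcal O(\tau)$ cross term in the Hilbert-space expansion of $\|\Phi(f)-\Phi(g)\|_{L^2}^2$ to vanish. Since $\Phi$ is $\C$-linear in its argument, I would first write $\Phi(f)-\Phi(g)=\Phi(h)=h+i\tau Lh$, where
\[
Lh\triangleq \fe^{-it_n\partial_x^2}\mathcal D_{-\tau}\bigl[\xi\cdot \mathcal D_\tau[\fe^{it_n\partial_x^2}h]\bigr],
\]
and then expand
\[
\|\Phi(f)-\Phi(g)\|_{L^2}^2=\|h\|_{L^2}^2+2\tau\,\langle h,iLh\rangle+\tau^2\,\|Lh\|_{L^2}^2.
\]
The task then splits into (a) showing $\langle h,iLh\rangle=0$, and (b) bounding $\tau^2\|Lh\|_{L^2}^2\lesssim \tau\|h\|_{L^2}^2$.

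Step (a) is the heart of the argument and, in my view, the main obstacle. Using unitarity of $\fe^{\pm it_n\partial_x^2}$ on $L^2$, I would reduce the cross term to $\langle w,iL_0 w\rangle$ with $w\triangleq\fe^{it_n\partial_x^2}h$ and $L_0\triangleq\mathcal D_{-\tau}\bigl[\xi\cdot\mathcal D_\tau[\,\cdot\,]\bigr]$. The Fourier symbol of $\mathcal D_\tau$ is $m_\tau(k)\triangleq\mathcal M_\tau(\fe^{-i\rho k^2})$; a direct check of the defining formula gives $\mathcal D_{-\tau}$ the symbol $\overline{m_\tau(k)}$, so $\mathcal D_{-\tau}=\mathcal D_\tau^{\,*}$ with respect to the complex $L^2$ pairing. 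Because $\xi$ is real-valued, multiplication by $\xi$ is self-adjoint as well, hence $L_0=\mathcal D_\tau^{\,*}\circ[\xi\cdot]\circ\mathcal D_\tau$ is self-adjoint. Consequently $(w,L_0w)\in\R$, which forces $\langle w,iL_0w\rangle=\mathrm{Im}(w,L_0w)=0$. I want to emphasise that this cancellation is essential: without it, the naive bound $|\langle h,iLh\rangle|\le\|h\|_{L^2}\|Lh\|_{L^2}$ combined with step (b) would produce only a factor $1+C\sqrt\tau$ per time step, which is catastrophic over the $\mathcal O(1/\tau)$ steps required to reach a fixed final time.

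For step (b), I would compute on the Fourier side:
\[
\widehat{(Lh)}_k=\fe^{it_n k^2}\,\overline{m_\tau(k)}\sum_{k_1+k_2=k}\hat\xi_{k_1}\,m_\tau(k_2)\,\fe^{-it_n k_2^2}\,\hat h_{k_2}.
\]
Then $|\hat\xi_{k_1}|\le\|\xi\|_{\hat l^\infty}$ together with Cauchy--Schwarz in $k_2$ yields $|\widehat{(Lh)}_k|\lesssim\|\xi\|_{\hat l^\infty}\,|m_\tau(k)|\,\|m_\tau\|_{l^2}\,\|h\|_{L^2}$, and squaring and summing in $k$ gives $\|Lh\|_{L^2}^2\lesssim\|\xi\|_{\hat l^\infty}^2\,\|m_\tau\|_{l^2}^4\,\|h\|_{L^2}^2$. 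The elementary two-scale estimate $|m_\tau(k)|\le\min\{1,\,2/(\tau k^2)\}$, split at $|k|\sim\tau^{-1/2}$, delivers $\|m_\tau\|_{l^2}^2\lesssim\tau^{-1/2}$, so $\tau^2\|Lh\|_{L^2}^2\lesssim\tau\,\|\xi\|_{\hat l^\infty}^2\,\|h\|_{L^2}^2$. Combining (a) and (b), $\|\Phi(f)-\Phi(g)\|_{L^2}^2\le(1+C\tau)\|h\|_{L^2}^2$ with $C\lesssim\|\xi\|_{\hat l^\infty}^2$, and taking square roots completes the proof.
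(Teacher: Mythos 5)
Your proof is correct and follows essentially the same route as the paper's: the same expansion of the squared norm, the same cancellation of the $\mathcal O(\tau)$ cross term via $\mathcal D_{-\tau}=\mathcal D_\tau^{\,*}$ together with the realness of $\xi$, and the same $\sqrt{\tau}$ bound on the quadratic term. The only cosmetic difference is that you bound $\|Lh\|_{L^2}$ by a direct Cauchy--Schwarz using $\|m_\tau\|_{l^2}^2\lesssim\tau^{-1/2}$, whereas the paper reaches the identical estimate by duality and an explicit three-region frequency splitting at $N_0=\tau^{-1/2}$.
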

\begin{proof}
Denote $\tilde h=\fe^{it_n\partial_x^2}h$, then $\|\tilde h\|_{L^2}=\|h\|_{L^2}$.
Note that
\begin{align*}
\Phi(f)-\Phi(g)
&=\fe^{-it_n\partial_x^2}\Big(\tilde h+ i\tau  \mathcal D_{-\tau}\Big[\xi\>
\mathcal D_\tau \tilde h\Big]\Big).
\end{align*}
Therefore,
\begin{align*}
\big\|\Phi(f)-\Phi(g)\big\|_{L^2}^2
= & \Big\|\tilde h+ i\tau  \mathcal D_{-\tau}\Big[\xi\>
\mathcal D_\tau \tilde h\Big]\Big\|_{L^2}^2\\
= & \|h\|_{L^2}^2+2\Big\langle \tilde h, i\tau  \mathcal D_{-\tau}\Big[\xi\>
\mathcal D_\tau \tilde h\Big]\Big\rangle
+\tau^2\Big\|  \mathcal D_\tau\Big[\xi\>
\mathcal D_\tau \tilde h\Big]\Big\|_{L^2}^2.
\end{align*}
where in the last step we have used the relationship: $\fe^{i\tau\partial_x^2}\mathcal{D}_{-\tau}
=\mathcal{D}_{\tau}$.

A key observation (for real-valued $\xi$) is that
\begin{align}\label{est:vanish}
\Big\langle \tilde h, i\tau  \mathcal D_{-\tau}\Big[\xi\>
\mathcal D_\tau \tilde h\Big]\Big\rangle
=&\Big\langle  \mathcal D_\tau\tilde h, i\tau \>\xi\>
\mathcal D_\tau\tilde h\Big\rangle
=\tau \mbox{Im }\int_\T \xi | \mathcal D_\tau\tilde h|^2\,dx
=0,
\end{align}
where in the second step we have used the relationship $\mathcal D_{-\tau}=\overline{\mathcal D_\tau}$.

Moreover, we claim that
\begin{align}\label{est:Dh}
\tau \Big\|  \mathcal D_\tau\Big[\xi\>
\mathcal D_\tau\tilde h\Big]\Big\|_{L^2}
\lesssim
\sqrt{\tau} \big\|\xi\big\|_{\hat{l}^\infty} \|h\|_{L^2}.
\end{align}
To prove this claim, we take the Fourier transform and write
\begin{align*}
\mathcal F_k\Big[\mathcal D_\tau\big(\xi\>
\mathcal D_\tau\tilde h\big)\Big]
=&
\sum\limits_{k_1+k_2=k}\mathcal M_\tau\big(\fe^{isk^2}\big)\mathcal M_\tau\big(\fe^{-isk_2^2}\big)\hat \xi_{k_1}
\hat{\tilde h}_{k_2}.
\end{align*}
Now by duality and Plancherel's identity  we have
\begin{align*}
 \Big\|  \mathcal D_\tau\Big[\xi\>
\mathcal D_\tau\tilde h\Big]\Big\|_{L^2}
=&\sup_{w:\|w\|_{l^2}=1}\Big\langle \sum\limits_{k_1+k_2=k}\mathcal M_\tau\big(\fe^{isk^2}\big)\mathcal M_\tau\big(\fe^{-isk_2^2}\big)\hat \xi_{k_1}
\hat{\tilde h}_{k_2}, w\Big\rangle.
\end{align*}
As before, we  omit  $\sup\limits_{w:\|w\|_{l^2}=1}$ in the front. Then we split it into the following three cases and further write
\begin{align*}
 \Big\|  \mathcal D_\tau\Big[\xi\>
\mathcal D_\tau\tilde h\Big]\Big\|_{L^2}
=&
\sum\limits_{\substack{k_1+k_2=k\\ |k|\le N_0, |k_2|\le N_0}}\mathcal M_\tau\big(\fe^{isk^2}\big)\mathcal M_\tau\big(\fe^{-isk_2^2}\big)\hat \xi_{k_1}
\hat{\tilde h}_{k_2}\>w_k\\
&\qquad +\sum\limits_{\substack{k_1+k_2=k\\ |k|\le N_0, |k_2|\ge N_0}}\mathcal M_\tau\big(\fe^{isk^2}\big)\mathcal M_\tau\big(\fe^{-isk_2^2}\big)\hat \xi_{k_1}
\hat{\tilde h}_{k_2}\>w_k\\
&\qquad+\sum\limits_{\substack{k_1+k_2=k\\ |k|\ge N_0, |k_2|\ge N_0}}\mathcal M_\tau\big(\fe^{isk^2}\big)\mathcal M_\tau\big(\fe^{-isk_2^2}\big)\hat \xi_{k_1}
\hat{\tilde h}_{k_2}\>w_k,
\end{align*}
where $N_0>0$ will be determined later.
Note that
$$
\big|\mathcal M_\tau\big(\fe^{isk^2}\big)\big|
\lesssim \min\{1,\tau^{-1}|k|^{-2}\},
$$
therefore,
 \begin{align*}
 \Big\|  \mathcal D_\tau\Big[\xi\>
\mathcal D_\tau\tilde h\Big]\Big\|_{L^2}
\lesssim &
\|\xi\|_{\hat{l}^\infty}\Big(\sum\limits_{\substack{k,k_2\\ |k|\le N_0, |k_2|\le N_0}}\big|
\hat{\tilde h}_{k_2}\big|\>|w_k|
 +\sum\limits_{\substack{k,k_2\\ |k|\le N_0, |k_2|\ge N_0}} \tau^{-1}|k_2|^{-2} \big|
\hat{\tilde h}_{k_2}\big|\>|w_k|\\
&\qquad\quad +\sum\limits_{\substack{k,k_2\\ |k|\ge N_0, |k_2|\ge N_0}}\tau^{-2}|k|^{-2}|k_2|^{-2}
 \big|
\hat{\tilde h}_{k_2}\big|\Big)\>|w_k|.
\end{align*}
This gives that
\begin{align*}
 \Big\|  \mathcal D_\tau\Big[\xi\>
\mathcal D_\tau\tilde h\Big]\Big\|_{L^2}
\lesssim &
\|\xi\|_{\hat{l}^\infty}
\Big(N_0+\tau^{-1}N_0^{-1}+\tau^{-2}N_0^{-3}\Big)\big\|\hat h_k\big\|_{l^2_k}\|w_k\|_{l^2_k}\\
\lesssim &
\|\xi\|_{\hat{l}^\infty}
\Big(N_0+\tau^{-1}N_0^{-1}+\tau^{-2}N_0^{-3}\Big)\big\|h\big\|_{L^2}.
\end{align*}
Choosing $N_0=\tau^{-\frac12}$, we obtain \eqref{est:Dh}.

Together with \eqref{est:vanish} and \eqref{est:Dh}, we have
\begin{align*}
\big\|\Phi(f)-\Phi(g)\big\|_{L^2}^2
\le & (1+C\tau) \|h\|_{L^2}^2,
\end{align*}
and thus
\begin{align*}
\big\|\Phi(f)-\Phi(g)\big\|_{L^2}
\le & (1+C\tau) \|h\|_{L^2}.
\end{align*}
This finishes the proof of the lemma.
\end{proof}

\begin{lemma}
[Stability of splitting part]\label{lem:stability split part}
  Let  $f\in L^2(\T)$ and $g\in H^{\frac12+}(\T)$, then
  \begin{align}\label{lem:stability split part result}
    \left\| \Psi(f)- \Psi(g)\right\|_{L^2}
    \leq C\tau\left(1+N\|f-g\|_{L^2}^2\right)\|f-g\|_{L^2},
  \end{align}
  for some constant $C>0$ that depends on $\|g\|_{H^{\frac12+}}$.
\end{lemma}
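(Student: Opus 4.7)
My plan is to exploit the unitarity of the linear propagators to reduce the estimate to a pointwise comparison of the nonlinear phase $\mathcal N_\tau$ at $\tilde f=\fe^{i\tau\partial_x^2}\fe^{it_n\partial_x^2}f$ and $\tilde g=\fe^{i\tau\partial_x^2}\fe^{it_n\partial_x^2}g$. Since $\fe^{-it_{n+1}\partial_x^2}$ and $\fe^{i\tau\partial_x^2}\fe^{it_n\partial_x^2}$ are isometries on $L^2$ (and the latter also preserves any $H^s$ norm), I have
$\|\Psi(f)-\Psi(g)\|_{L^2}=\|G(\tilde f)-G(\tilde g)\|_{L^2}$ with $G(u)\triangleq \mathcal N_\tau[u]-u=(\fe^{-i\tau\lambda|P_{\le N}u|^2}-1)u$, and moreover $\|\tilde f-\tilde g\|_{L^2}=\|f-g\|_{L^2}$ and $\|\tilde g\|_{H^{1/2+}}=\|g\|_{H^{1/2+}}$.

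The key step is the algebraic splitting
\begin{equation*}
G(\tilde f)-G(\tilde g)=\bigl(\fe^{-i\tau\lambda|P_{\le N}\tilde f|^2}-1\bigr)(\tilde f-\tilde g)+\bigl(\fe^{-i\tau\lambda|P_{\le N}\tilde f|^2}-\fe^{-i\tau\lambda|P_{\le N}\tilde g|^2}\bigr)\tilde g.
\end{equation*}
For the first piece I use $|\fe^{i\theta}-1|\le|\theta|$ to bound it in $L^2$ by $\tau\|P_{\le N}\tilde f\|_{L^\infty}^2\|\tilde f-\tilde g\|_{L^2}$. For the second piece I use $|\fe^{i\theta_1}-\fe^{i\theta_2}|\le|\theta_1-\theta_2|$ together with $|a|^2-|b|^2=(a-b)\bar a+b\overline{(a-b)}$ to get an $L^2$ bound of the form $\tau\|\tilde g\|_{L^\infty}(\|P_{\le N}\tilde f\|_{L^\infty}+\|P_{\le N}\tilde g\|_{L^\infty})\|P_{\le N}(\tilde f-\tilde g)\|_{L^2}$.

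Next I control the $L^\infty$ norms. By Sobolev embedding $H^{1/2+}\hookrightarrow L^\infty$, I have $\|\tilde g\|_{L^\infty}+\|P_{\le N}\tilde g\|_{L^\infty}\lesssim \|g\|_{H^{1/2+}}$. Writing $\tilde f=\tilde g+(\tilde f-\tilde g)$ and applying Bernstein's inequality $\|P_{\le N}w\|_{L^\infty}\lesssim N^{1/2}\|w\|_{L^2}$, I obtain $\|P_{\le N}\tilde f\|_{L^\infty}\lesssim \|g\|_{H^{1/2+}}+N^{1/2}\|\tilde f-\tilde g\|_{L^2}$, and hence $\|P_{\le N}\tilde f\|_{L^\infty}^2\lesssim \|g\|_{H^{1/2+}}^2+N\|\tilde f-\tilde g\|_{L^2}^2$. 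Combining both pieces and using the elementary inequality $N^{1/2}\|\tilde f-\tilde g\|_{L^2}\le \tfrac12(1+N\|\tilde f-\tilde g\|_{L^2}^2)$ to absorb the cross term yields
\begin{equation*}
\|\Psi(f)-\Psi(g)\|_{L^2}\le C\tau\bigl(1+N\|f-g\|_{L^2}^2\bigr)\|f-g\|_{L^2},
\end{equation*}
where $C$ depends only on $\|g\|_{H^{1/2+}}$. The only potentially delicate point is that the natural bound on $\|P_{\le N}\tilde f\|_{L^\infty}$ costs a power of $N^{1/2}$ through Bernstein; this is precisely what forces the $N\|f-g\|_{L^2}^2$ factor appearing in the conclusion, and confirms why the filter cutoff $N=\tau^{-1/2+\eps_0}$ is tuned the way it is in the subsequent global error analysis.
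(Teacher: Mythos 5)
Your proposal is correct and follows essentially the same route as the paper's proof: the same reduction via unitarity, the same algebraic splitting of $\mathcal N_\tau[\tilde f]-\mathcal N_\tau[\tilde g]-\tilde h$ into the two pieces $(\fe^{-i\tau\lambda|P_{\le N}\tilde f|^2}-1)\tilde h$ and $(\fe^{-i\tau\lambda|P_{\le N}\tilde f|^2}-\fe^{-i\tau\lambda|P_{\le N}\tilde g|^2})\tilde g$, and the same Sobolev/Bernstein control of the $L^\infty$ norms producing the $N\|f-g\|_{L^2}^2$ factor. The only cosmetic difference is that you absorb the cross term $N^{1/2}\|h\|_{L^2}^2$ via an elementary inequality where the paper keeps the three terms $\tau\|h\|+\tau N^{1/2}\|h\|^2+\tau N\|h\|^3$ explicit before combining them; both yield the stated bound.
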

\begin{proof}
For simplicity, we denote
$$
\tilde f=\fe^{i\tau\partial_x^2}\fe^{it_n\partial_x^2}f,
\quad
\tilde g=\fe^{i\tau\partial_x^2}\fe^{it_n\partial_x^2}g,
\quad \tilde h=\fe^{i\tau\partial_x^2}\fe^{it_n\partial_x^2} h,\quad h=f-g.
$$
Then
$$
 \Psi(f)-\Psi(g)= \fe^{-it_{n+1}\partial_x^2} \Big(\mathcal N_\tau\big[\tilde f\big]-\mathcal N_\tau\big[\tilde g\big]-\tilde h\Big).
$$
We rewrite
\begin{align*}
  \mathcal{N}_\tau[\tilde f]-\mathcal{N}_\tau[\tilde g]
  =&\fe^{i\tau|P_{\leq N}\tilde f|^2}\tilde f-
  \fe^{i\tau|P_{\leq N}\tilde g|^2}\tilde g\\
  =&\fe^{i\tau|P_{\leq N}f|^2}\tilde h+
  \left[\fe^{i\tau|P_{\leq N}\tilde f|^2}-\fe^{i\tau|P_{\leq N}\tilde g|^2}\right] \tilde g.
\end{align*}
Therefore,
$$
 \Psi(f)-\Psi(g)
 =\fe^{-it_{n+1}\partial_x^2}
 \left[\left(\fe^{i\tau|P_{\leq N}\tilde f|^2}-1\right)\tilde h+
  \left(\fe^{i\tau|P_{\leq N}\tilde f|^2}-\fe^{i\tau|P_{\leq N}\tilde g|^2}\right) \tilde g\right].
$$
Then by  the Sobolev and Bernstein inequalities, we have
\begin{align*}
\left\|\Psi(f)-\Psi(g)\right\|_{L^2}
\lesssim & \left\|\Big[\fe^{i\tau|P_{\leq N}\tilde f|^2}-1\Big]\tilde h\right\|_{L^2}
+ \left\|\left[\fe^{i\tau|P_{\leq N}\tilde f|^2}-\fe^{i\tau|P_{\leq N}\tilde g|^2}\right] \tilde g\right\|_{L^2} \\
\lesssim & \Big\|\fe^{i\tau|P_{\leq N}\tilde f|^2}-1\Big\|_{L^\infty} \big\|\tilde h\big\|_{L^2}
+ \left\|\fe^{i\tau|P_{\leq N}\tilde f|^2}-\fe^{i\tau|P_{\leq N}\tilde g|^2}\right\|_{L^2}\big\|\tilde g\big\|_{L^\infty}.
  \end{align*}
Note that
\begin{align*}
\Big\|\fe^{i\tau|P_{\leq N}\tilde f|^2}-1\Big\|_{L^\infty}
\lesssim
\Big\|i\tau|P_{\leq N}\tilde f|^2\Big\|_{L^\infty}=&\tau \big\|P_{\leq N}\tilde f\big\|_{L^\infty}^2
\lesssim  \tau N\big\|h\big\|_{L^2}^2+\tau\big\|g\big\|_{H^{\frac12+}}^2.
\end{align*}
Similarly, we have
\begin{align*}
\left\|\fe^{i\tau|P_{\leq N}\tilde f|^2}-\fe^{i\tau|P_{\leq N}\tilde g|^2}\right\|_{L^2}
\lesssim &
\tau \big\||P_{\leq N}\tilde f|^2-|P_{\leq N}\tilde g|^2\big\|_{L^2}\\
\lesssim &
\tau \big\|P_{\leq N}\tilde h\big\|_{L^2}
\big(\big\|P_{\leq N}\tilde f\big\|_{L^\infty}+\big\|P_{\leq N}\tilde g\big\|_{L^\infty}\big)\\
\lesssim & \tau N^\frac12\big\|h\big\|_{L^2}^2+\tau  \big\|g\big\|_{H^{\frac12+}} \big\|h\big\|_{L^2}.
\end{align*}
Applying these two estimates, we further have
\begin{align*}
\left\|\Psi(f)-\Psi(g)\right\|_{L^2}
\le & C \tau \big\|h\big\|_{L^2}+C\tau N^\frac12\big\|h\big\|_{L^2}^2+ C \tau N\big\|h\big\|_{L^2}^3\\
\le & C \tau \big\|h\big\|_{L^2}+ C \tau N\big\|h\big\|_{L^2}^3,
  \end{align*}
where the constant $C>0$ depends  on $\|g\|_{H^{\frac12+}}$.
 Hence, we obtain the desired assertion.
\end{proof}

Now, we move on to  estimating the local truncation errors $R^n_1,R^n_2,R^n_3$ one by one.  To estimate $R_1^n$, we need the following result.
\begin{lemma}\label{lem:v-s-tn}
Let $\xi\in \hat b^{s,p}$ with the conditions of \cref{thm:main-N1} satisfied, so $u\in L^\infty_t H_x^{s+\gamma_p-}$\ $([0,T]\times\T)$ with $ \gamma_p=\frac32+\frac1p$. Then, for any $0\leq\beta< s+\frac1p+\frac32$ we have
\begin{align}
\big\|v(t)-v(t_n)\big\|_{L^\infty_tH^\beta_x([t_n,t_{n+1}]\times\T)}
\le C\tau\max\left\{\tau^{-\frac12(\beta-s-\frac1p)-\frac14-},1\right\},
\end{align}
where the constant $C>0$ depends only on $\|\xi\|_{\hat b^{s,p}}$ and $\|u\|_{L^\infty_t H_x^{s+\gamma_p-}}$.
\end{lemma}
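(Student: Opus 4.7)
The plan is to establish the bound at two ``anchor'' Sobolev exponents and then interpolate. Set $\beta_\ast:=s+\frac{1}{p}-\frac{1}{2}$ and $\beta_1:=s+\gamma_p$, so $\beta_1-\beta_\ast=2$. The target rate can be rewritten as $\tau\max\{\tau^{-\frac{1}{2}(\beta-s-\frac{1}{p})-\frac{1}{4}-},1\}=\tau^{1-\theta-}$ with $\theta=(\beta-\beta_\ast)/(\beta_1-\beta_\ast)$ when $\beta\ge\beta_\ast$, and it equals $\tau$ when $\beta\le\beta_\ast$. This shape suggests controlling $\|v(t)-v(t_n)\|_{H^{\beta_\ast-}}$ by $\tau$ and $\|v(t)-v(t_n)\|_{H^{\beta_1-}}$ by $O(1)$, and then interpolating between them.

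First I would prove the low-regularity estimate
\begin{equation*}
\|v(t)-v(t_n)\|_{L^\infty_tH^{\beta_\ast-}_x([t_n,t_{n+1}])}\lesssim\tau .
\end{equation*}
From the Duhamel identity \eqref{Duhamel-2} and the unitarity of $\fe^{-i\rho\partial_x^2}$ on $H^{\beta_\ast-}$, this reduces to verifying $\|\xi u+|u|^2u\|_{L^\infty_tH^{\beta_\ast-}_x}<\infty$. The cubic contribution is immediate from the Banach-algebra property of $H^{s+\gamma_p-}$ (since $s+\gamma_p->\frac12$), yielding $\||u|^2u\|_{H^{\beta_\ast-}}\lesssim\|u\|_{H^{s+\gamma_p-}}^3$. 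The core step is the product estimate
\begin{equation*}
\|\xi u\|_{H^{\beta_\ast-}}\lesssim\|\xi\|_{\hat b^{s,p}}\|u\|_{H^{s+\gamma_p-}},
\end{equation*}
which I would prove by Littlewood--Paley and duality on the Fourier side. Expanding $\mathcal F_k[\xi u]=\sum_{k_1+k_2=k}\hat\xi_{k_1}\hat u_{k_2}$, one splits the sum into the regimes $|k_1|\lesssim|k_2|$ (so $|k|\sim|k_2|$) and $|k_1|\gg|k_2|$ (so $|k|\sim|k_1|$), then applies H\"older in $k_1$ adapted to the $\hat b^{s,p}$ structure of $\xi$, in the same spirit as the treatment of $I_{211}$ and $I_{212}$ inside the proof of \cref{lem:Phi-i}, but without needing any phase oscillation.

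Second, \cref{main:thm1} yields $v\in L^\infty_tH^{\beta_1-}_x$ with norm controlled by $\|\xi\|_{\hat b^{s,p}}$ and $\|u_0\|_{H^{\beta_1-}}$, so trivially $\|v(t)-v(t_n)\|_{H^{\beta_1-}}\le 2\|v\|_{L^\infty_tH^{\beta_1-}_x}\lesssim 1$. For $\beta_\ast\le\beta<\beta_1$, Sobolev interpolation between the two anchors then produces
\begin{equation*}
\|v(t)-v(t_n)\|_{H^\beta}\lesssim\tau^{1-\theta-},\qquad \theta=\frac{\beta-\beta_\ast}{\beta_1-\beta_\ast}=\frac12\Big(\beta-s-\frac{1}{p}\Big)+\frac14,
\end{equation*}
so that $1-\theta=\frac34-\frac12(\beta-s-\frac{1}{p})$, matching the claimed exponent. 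For $0\le\beta\le\beta_\ast$ (non-trivial only when $\beta_\ast\ge 0$), monotonicity $\|\cdot\|_{H^\beta}\le\|\cdot\|_{H^{\beta_\ast-}}$ gives the factor $\tau$ immediately.

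The main obstacle is the product estimate in the first step: since $\xi$ may be as rough as $\hat l^\infty$ (which includes, e.g., Dirac deltas), classical Kato--Ponce-type fractional Leibniz rules are unavailable. The estimate must be handled by a careful Fourier-side dyadic decomposition that pits the $\hat b^{s,p}$-H\"older structure of $\xi$ against the $H^{s+\gamma_p-}$-regularity of $u$, closely analogous in spirit to the non-resonant arguments carried out in \cref{sec:Thm1}.
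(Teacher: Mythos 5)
Your proposal is correct, and it reaches the stated bound by a genuinely different route from the paper. The paper never isolates a single product estimate for $\xi u$; instead it inserts a frequency cutoff $P_{\le N_0}$, estimates the Duhamel integral directly in $H^\beta$ for general $\beta$ by splitting into the regimes $|k_1|\lesssim|k_2|$ and $|k_1|\gg|k_2|$ (obtaining bounds of the form $\tau\max\{N_0^{-s-\frac1p+\beta+\frac12+},1\}$ for the low frequencies and $N_0^{-s-\gamma_p+\beta+}$ for the high-frequency tail), and then balances the two by choosing $N_0=\tau^{-\frac12}$. Your interpolation between the two anchors $H^{\beta_\ast-}$ and $H^{\beta_1-}$ performs exactly this balancing via log-convexity of Sobolev norms instead of via the cutoff, and the identity $1-\theta=\tfrac34-\tfrac12(\beta-s-\tfrac1p)$ checks out, as does the reformulation of the $\max$ into the two regimes $\beta\le\beta_\ast$ and $\beta>\beta_\ast$. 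What your version buys is conceptual clarity: the whole lemma reduces to the single critical product estimate $\|\xi u\|_{H^{\beta_\ast-}}\lesssim\|\xi\|_{\hat b^{s,p}}\|u\|_{H^{s+\gamma_p-}}$ (which is sharp --- for $\xi=\delta$ one has $\xi u=u(0)\delta\in H^{-\frac12-}$ and no better, so $\beta_\ast=s+\tfrac1p-\tfrac12$ is exactly the right anchor), plus the trivial $O(1)$ bound at the top regularity. What the paper's version buys is that the frequency-localized estimates it produces along the way (with explicit $N_0$-dependence) are reused almost verbatim in the proof of \cref{lem:est-Rn1}, so the cutoff formulation is not redundant in context. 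The only piece of your argument left as a sketch is the product estimate itself; the regime splitting you describe does close it (with H\"older in $k_1$ at exponents $p,r,2$ where $\tfrac1r=\tfrac12-\tfrac1p$, the high-$k_1$ regime is exactly log-critical at $\beta_\ast$, which is why the $\epsilon$-loss in $\beta_\ast-$ is needed), and for $\beta_\ast<0$ one should bound $\langle k\rangle^{\beta_\ast}\le 1$ rather than by $\langle k_2\rangle^{\beta_\ast}$ in the balanced regime, since $|k|$ may be much smaller than $|k_2|$ there.
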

\begin{proof}
By $\|v(t)-v(t_n)\|_{H^{s+\gamma_p-}}\le  2\|v\|_{L_t^\infty H_x^{s+\gamma_p-}}$, we can deduce that for any
$\beta<s+\gamma_p$,
\begin{align}\label{v-differ-HL}
  \|v(t)-v(t_n)\|_{H^\beta}&=\|P_{\leq N_0}(v(t)-v(t_n))\|_{H^\beta}+\|P_{> N_0}(v(t)-v(t_n))\|_{H^\beta}\notag\\
&\lesssim \|P_{\leq N_0}(v(t)-v(t_n))\|_{H^\beta}+N_0^{-s-\gamma_p+\beta+}\|v\|_{L_t^\infty H_x^{s+\gamma_p-}},
\end{align}
where $N_0>0$ will be determined later.
Hence, in the following  we only need to estimate $\|P_{\leq N_0}(v(t)-v(t_n))\|_{H^\beta}$.

By \eqref{Duhamel-1}, we have that
\begin{align*}
v(t)-v(t_n)=&i\int_{t_n}^t\fe^{-i\rho\partial_x^2}\Big[\xi \>\fe^{i\rho\partial_x^2}v(\rho)\Big]\,d\rho+i\int_{t_n}^t\fe^{-i\rho\partial_x^2}|u(\rho)|^2u(\rho)\,d\rho\\
\triangleq & I_1+I_2.
\end{align*}
Since $s+\gamma_p>\frac12$, we can directly have
\begin{equation}\label{est:I2}
\|I_2\|_{L^\infty_tH^{s+\gamma_p-}_x([t_n,t_{n+1}])}\lesssim\tau\|u\|_{H^{s+\gamma_p-}}^3.
\end{equation}
Then it remains to estimate $P_{\leq N_0}I_{1}$.
 For $I_1$, we take the Fourier transform to have
\begin{align*}
 \widehat{(I_1)}_k&=i\int_{t_n}^t\sum_{k=k_1+k_2}\fe^{i\rho(k^2-k_2^2)}\hat{\xi}_{k_1}\hat{v}_{k_2}(\rho)d\rho\\
 &=i\int_{t_n}^t\sum_{\substack{k_1+k_2=k\\|k_1|\lesssim |k_2|}}\fe^{i\rho(k^2-k_2^2)}\hat{\xi}_{k_1}\hat{v}_{k_2}(\rho)d\rho
 +i\int_{t_n}^t\sum_{\substack{k_1+k_2=k\\|k_1|\gg |k_2|}}\fe^{i\rho(k^2-k_2^2)}\hat{\xi}_{k_1}\hat{v}_{k_2}(\rho)d\rho\\
 &\triangleq  \widehat{(I_{11})}_k+\widehat{(I_{12})}_k.
\end{align*}
We shall estimate $P_{\leq N_0}I_{11}$ and $P_{\leq N_0}I_{12}$ in a sequel.

For $P_{\leq N_0}I_{11}$, by duality and Plancherel's identity  we have
\begin{align*}
  &\|P_{\leq N_0}I_{11}\|_{L^\infty_tH^\beta_x([t_n,t_{n+1}]\times\T)}\\
  \lesssim &
  \tau\Big\|\sup\limits_{h:\|h\|_{l^2}=1}\sum_{\substack{|k_1|\lesssim |k_2|\\|k_1+k_2|\leq N_0}} \big|\hat{\xi}_{k_1}\big|
  |\hat{v}_{k_2}(t)||h_{k_1+k_2}|\cdot \langle k_1+k_2\rangle^\beta\Big\|_{L^\infty_t([t_n,t_{n+1}])}\notag\\
\lesssim & \tau\Big\|\sup\limits_{h:\|h\|_{l^2}=1}\sum_{\substack{|k_1|\lesssim |k_2|\\|k_1+k_2|\leq N_0}} \langle k_1\rangle^s \big|\hat{\xi}_{k_1}\big|\>\langle k_2\rangle^{s+\gamma_p-}
  |\hat{v}_{k_2}(t)| \> |h_{k_1+k_2}|\langle k_1\rangle^{-s} \\
  &\qquad\qquad\qquad\qquad  \cdot\langle k_1+k_2\rangle^\beta \langle k_2\rangle^{-s-\gamma_p+}\Big\|_{L^\infty_t([t_n,t_{n+1}])}\\
 \lesssim & \tau\Big\|\sup\limits_{h:\|h\|_{l^2}=1}\sum_{\substack{|k_1|\lesssim |k_2|\\|k_1+k_2|\leq N_0}} \langle k_1\rangle^s \big|\hat{\xi}_{k_1}\big|\>\langle k_2\rangle^{s+\gamma_p-}
  |\hat{v}_{k_2}(t)| \> |h_{k_1+k_2}| \langle k_1\rangle^{-s} \\
  &\qquad \qquad\qquad\qquad \cdot\min\{N_0^{\beta},\langle k_2\rangle^\beta\} \langle k_2\rangle^{-s-\gamma_p+}\Big\|_{L^\infty_t([t_n,t_{n+1}])}.
\end{align*}
Denote $\frac1r=\frac12-\frac1p$.

If $s\le \frac1r$,
then  our strategy is to first sum up $k_1$ and then sum up $k_2$,  and by Cauchy-Schwartz's inequality, we further have
\begin{align*}
  &\big\|P_{\leq N_0}I_{11}\big\|_{L^\infty_tH^\beta_x([t_n,t_{n+1}]\times\T)}\\
  \lesssim  &
  \tau \|\xi\|_{\hat b^{s,p}}\Big\| \sum_{k_2}
 \big\|\langle k_1\rangle^{-s}\big\|_{l^r(\{k_1:|k_1|\lesssim |k_2|\})}   \min\left\{N_0^{\beta},\langle k_2\rangle^\beta\right\}\langle k_2\rangle^{-s-\gamma_p+}\\
 &\qquad\qquad\quad \cdot \langle k_2\rangle^{s+\gamma_p-} |\hat{v}_{k_2}(t)| \Big\|_{L^\infty_t([t_n,t_{n+1}])}\notag\\
   \lesssim  &
  \tau \|\xi\|_{\hat b^{s,p}}\Big\| \sum_{k_2}
\langle k_2\rangle^{-2s-\gamma_p+\frac1r+} \min\left\{N_0^{\beta},\langle k_2\rangle^\beta\right\} \langle k_2\rangle^{s+\gamma_p-} |\hat{v}_{k_2}(t)| \Big\|_{L^\infty_t([t_n,t_{n+1}])}\notag\\
  \lesssim  &
  \tau \|\xi\|_{\hat b^{s,p}}\|v\|_{L^\infty_tH^{s+\gamma_p-}_x} \Big\|
\langle k_2\rangle^{-2s-\gamma_p+\frac1r+} \min\left\{N_0^{\beta},\langle k_2\rangle^\beta\right\} \Big\|_{l^2_{k_2}}.
\end{align*}
Noticing that
\begin{equation*}
 \Big\|
\langle k_2\rangle^{-2s-\gamma_p+\frac1r+} \min\left\{N_0^{\beta},\langle k_2\rangle^\beta\right\} \Big\|_{l^2_{k_2}}
\lesssim \max\left\{N_0^{-2s-\frac{2}{p}+\beta-\frac12+},1\right\},
\end{equation*}
we have
\begin{align}\label{est:I11-case1}
  \big\|P_{\leq N_0}I_{11}&\big\|_{L^\infty_tH^\beta_x([t_n,t_{n+1}]\times\T)}
  \lesssim
  \tau \max\left\{N_0^{-2s-\frac{2}{p}+\beta-\frac12+},1\right\} \|\xi\|_{\hat b^{s,p}}\|v\|_{L^\infty_tH^{s+\gamma_p-}_x} .
\end{align}

If $s> \frac1r$, then our strategy is to first sum up $k_2$ and then sum up $k_1$,  and  by Cauchy-Schwartz's inequality, we  have
\begin{align*}
  &\big\|P_{\leq N_0}I_{11}\big\|_{L^\infty_tH^\beta_x([t_n,t_{n+1}]\times\T)}\\
  \lesssim  &
  \tau \|v\|_{L^\infty_tH^{s+\gamma_p-}_x} \sum_{k_1}  \langle k_1\rangle^s \big|\hat{\xi}_{k_1}\big|
 \langle k_1\rangle^{-s}  \left\| \min\left\{N_0^{\beta},\langle k_2\rangle^\beta\right\}\langle k_2\rangle^{-s-\gamma_p+}\right\|_{l^\infty_{\{k_2:|k_1|\lesssim |k_2|\}}}.\end{align*}
Furthermore if $s+\frac1p<1$,  we can apply the following estimate:
$$
 \left\| \min\left\{N_0^{\beta},\langle k_2\rangle^\beta\right\}\langle k_2\rangle^{-s-\gamma_p+}\right\|_{l^\infty_{\{k_2:|k_1|\lesssim |k_2|\}}}
 \lesssim 1,
$$
where we have used the relation: $\beta<s+\gamma_p$, and then we can obtain
\begin{align*}
  \big\|P_{\leq N_0}I_{11}\big\|_{L^\infty_tH^\beta_x([t_n,t_{n+1}]\times\T)}
  \lesssim  &
  \tau \|\xi\|_{\hat b^{s,p}} \|v\|_{L^\infty_tH^{s+\gamma_p-}_x} \big\|\langle k_1\rangle^{-s}\big\|_{l^{p'}_{k_1}}\\
  \lesssim  &
  \tau \|\xi\|_{\hat b^{s,p}} \|v\|_{L^\infty_tH^{s+\gamma_p-}_x}.
\end{align*}
Otherwise for $s+\frac1p\ge 1$,  we have
$$
 \big\| \min\{N_0^{\beta},\langle k_2\rangle^\beta\}\langle k_2\rangle^{-s-\gamma_p+}\big\|_{l^\infty_{\{k_2:|k_1|\lesssim |k_2|\}}}
 \lesssim \langle k_1\rangle^{s+\frac1{p'}-1-}\max\left\{N_0^{-2s-\frac{2}{p}+\beta-\frac12+},1\right\},
$$
and we can find
\begin{align*}
  &\big\|P_{\leq N_0}I_{11}\big\|_{L^\infty_tH^\beta_x([t_n,t_{n+1}]\times\T)}\\
  \lesssim  &
  \tau \max\left\{N_0^{-2s-\frac{2}{p}+\beta-\frac12+},1\right\} \|\xi\|_{\hat b^{s,p}} \|v\|_{L^\infty_tH^{s+\gamma_p-}_x} \big\|\langle k_1\rangle^{-1-}\big\|_{l^{p'}_{k_1}}\\
  \lesssim  &
  \tau \max\left\{N_0^{-2s-\frac{2}{p}+\beta-\frac12+},1\right\} \|\xi\|_{\hat b^{s,p}} \|v\|_{L^\infty_tH^{s+\gamma_p-}_x}.
\end{align*}
Therefore, we obtain that for $s>\frac1r$,
\begin{align*}
  \big\|P_{\leq N_0}I_{11}&\big\|_{L^\infty_tH^\beta_x([t_n,t_{n+1}]\times\T)}
  \lesssim
  \tau \max\left\{N_0^{-2s-\frac{2}{p}+\beta-\frac12+},1\right\} \|\xi\|_{\hat b^{s,p}} \|v\|_{L^\infty_tH^{s+\gamma_p-}_x}.
\end{align*}
This combining with \eqref{est:I11-case1} gives that for either  $s\le \frac1r$ or  $s>\frac1r$,
\begin{align}\label{est:I11}
  \big\|P_{\leq N_0}I_{11}&\big\|_{L^\infty_tH^\beta_x([t_n,t_{n+1}]\times\T)}
  \lesssim
  \tau \max\left\{N_0^{-2s-\frac{2}{p}+\beta-\frac12+},1\right\} \|\xi\|_{\hat b^{s,p}} \|v\|_{L^\infty_tH^{s+\gamma_p-}_x}.
\end{align}

For $P_{\leq N_0}I_{12}$, we have
\begin{align}
  &\|P_{\leq N_0}I_{12}\|_{L^\infty_tH^\beta_x([t_n,t_{n+1}]\times\T)}\nonumber\\
  \lesssim & \tau\Big\|\sup\limits_{h:\|h\|_{l^2}=1}
  \sum_{\substack{|k_1|\gg|k_2|\\|k_1+k_2|\leq N_0}} \big|\hat{\xi}_{k_1}\big|
  |\hat{v}_{k_2}(t)| |h_{k_1+k_2}|  \langle k_1+k_2\rangle^\beta\Big\|_{L^\infty_t([t_n,t_{n+1}])}\nonumber\\
\lesssim & \tau\Big\|\sup\limits_{h:\|h\|_{l^2}=1}\sum_{\substack{|k_1|\gg|k_2|\\|k_1+k_2|\leq N_0}}
\langle k_1\rangle^{s}\big|\hat{\xi}_{k_1}\big|   |\hat{v}_{k_2}(t)| |h_{k_1+k_2}|\cdot\langle k_1\rangle^{\beta-s}
\Big\|_{L^\infty_t([t_n,t_{n+1}])}\nonumber\\
  \leq&\tau\|\xi\|_{\hat{b}^{s,p}}\Big\|\sum_{k_2}\big\|\langle k_1\rangle^{\beta-s}\big\|_{l^r_{\{k_1:|k_1|\lesssim N_0\}}}
  |\hat{v}_{k_2}(t)|\Big\|_{L^\infty_t([t_n,t_{n+1}])}. \label{err est proof eq2}
\end{align}
Noticing
\begin{equation*}
\big\|\langle k_1\rangle^{\beta-s}\big\|_{l^r_{\{k_1:|k_1|\lesssim N_0\}}}\lesssim \left\{\begin{split}&N_0^{\beta-s+1/r+},\quad\,\,\,\, \beta-s+1/r\geq0,\\
&1,\quad \qquad \qquad\quad \beta-s+1/r<0,\end{split}\right.
\end{equation*}
(\ref{err est proof eq2}) further gives
\begin{align}\label{est:I12}
\|P_{\leq N_0}I_{12}\|_{L^\infty_tH^\beta_x([t_n,t_{n+1}]\times\T)}
\lesssim &
\tau\max\{N_0^{\beta-s+\frac1r+,1},1\}
\|\xi\|_{\hat{b}^{s,p}}\Big\|\sum_{k_2}
  |\hat{v}_{k_2}(t)|\Big\|_{L^\infty_t([t_n,t_{n+1}])}\notag\\
\lesssim &
\tau\max\Big\{N_0^{-s-\frac1p+\beta+\frac12+},1\Big\}
\|\xi\|_{\hat{b}^{s,p}}
\|v\|_{L^\infty_tH^{s+\gamma_p-}_x},
\end{align}
where we have used $s+\gamma_p>\frac12$ in the last step.

Now (\ref{est:I2}) together with \eqref{est:I11} and \eqref{est:I12} lead us to
\begin{align*}
\|P_{\leq N_0}(v(t)-v(t_n))\|_{L^\infty_tH^\beta_x([t_n,t_{n+1}]\times\T)}\leq C\tau\max\left\{N_0^{-2s-\frac{2}{p}+\beta-\frac12+}, N_0^{-s-\frac1p+\beta+\frac12+},1\right\}.
\end{align*}
Noting that $-2s-\frac{2}{p}+\beta-\frac12<-s-\frac1p+\beta+\frac12$, we have
\begin{align*}
\|P_{\leq N_0}(v(t)-v(t_n))\|_{L^\infty_tH^\beta_x([t_n,t_{n+1}]\times\T)}\leq C\tau\max\left\{N_0^{-s-\frac1p+\beta+\frac12+},1\right\}.
\end{align*}
This combining with \eqref{v-differ-HL} give
\begin{align*}
\|v(t)-v(t_n)\|_{L^\infty_tH^\beta_x([t_n,t_{n+1}]\times\T)}\leq C\left(\tau\max\left\{N_0^{-s-\frac1p+\beta+\frac12+},1\right\}+N_0^{-s-\frac1p+\beta-\frac32+}\right).
\end{align*}
For simplicity, we denote $A= -s-\frac1p+\beta$ and set $N_0=\tau^{-\frac12}$, then it follows that
\begin{align*}
\|v(t)-v(t_n)\|_{L^\infty_tH^\beta_x([t_n,t_{n+1}]\times\T)}\leq C\tau\max\left\{\tau^{-\frac A2-\frac14},1\right\},
\end{align*}
which finishes the proof of the lemma.
\end{proof}

\begin{lemma}[Local error from $R_1^n$]\label{lem:est-Rn1}
Let $\xi\in\hat{b}^{s,p}$ with the conditions of \cref{thm:main-N1} satisfied, so $u,v\in L^\infty_t H_x^{s+\gamma_p-}([0,T])$. Then, we have
\begin{align}
\big\|R_1^n\big\|_{L^2}
\le  C\tau^{1+\min\left\{s+\frac1p+\frac14-,1\right\}},
\end{align}
where the constant $C>0$ only depends on $\|\xi\|_{\hat b^{s,p}}$ and $\|u\|_{L^\infty_t H_x^{s+\gamma_p-}}$.
\end{lemma}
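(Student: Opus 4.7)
The plan is to reduce the $L^2$ estimate for $R_1^n$ to a Sobolev estimate on $v(\rho)-v(t_n)$ supplied by \cref{lem:v-s-tn}, via a multiplier bound for $\xi$. By Minkowski's inequality in time and the $L^2$-isometry of $e^{\pm i\rho\partial_x^2}$,
\begin{align*}
\|R_1^n\|_{L^2} \le \int_{t_n}^{t_{n+1}} \bigl\|\xi\cdot e^{i\rho\partial_x^2}(v(\rho)-v(t_n))\bigr\|_{L^2}\,d\rho.
\end{align*}
Since $e^{i\rho\partial_x^2}$ preserves each $H^\beta$ norm, the task reduces to establishing, for an appropriate $\beta\ge 0$, the multiplier bound $\|\xi g\|_{L^2}\le C\|\xi\|_{\hat b^{s,p}}\|g\|_{H^\beta}$ and then controlling $\|v-v(t_n)\|_{H^\beta}$ via \cref{lem:v-s-tn}.

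The multiplier bound will be proved by duality and the frequency decomposition $\{|k_1|\lesssim|k_2|\}\cup\{|k_1|\gg|k_2|\}$, mirroring the treatment of $I_{11}$ and $I_{12}$ in the proof of \cref{lem:v-s-tn}. In both regions one applies H\"older's inequality in $k_1$ against $\|\langle k_1\rangle^s\hat\xi_{k_1}\|_{l^p}$ while paying a weight $\langle k_2\rangle^{-\beta}$ to absorb into the $H^\beta$-norm of $g$; the threshold for summability turns out to be $\beta > 1-s-\tfrac1p$. (For $p<\infty$ the same bound alternatively follows from $\hat b^{s,p}\hookrightarrow H^{s-\frac12+\frac1p-}$ together with Sobolev multiplication on $\mathbb T$.) Combining with \cref{lem:v-s-tn} then yields
\begin{align*}
\|R_1^n\|_{L^2} \le C\tau^2 \max\bigl\{\tau^{-\frac12(\beta-s-\frac1p)-\frac14-},\,1\bigr\}.
\end{align*}

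Finally, choose $\beta=\max\{1-s-\tfrac1p,\,0\}+$. If $s+\tfrac1p\ge\tfrac34$, the exponent $-\tfrac12(\beta-s-\tfrac1p)-\tfrac14$ is nonnegative, so the max equals $1$ and $\|R_1^n\|_{L^2}\le C\tau^2$. If $s+\tfrac1p<\tfrac34$, the choice $\beta=1-s-\tfrac1p+$ gives exponent $s+\tfrac1p-\tfrac34-<0$, hence $\|R_1^n\|_{L^2}\le C\tau^{\frac54+s+\frac1p-}=C\tau^{1+s+\frac1p+\frac14-}$. Both cases recover the claimed bound $\tau^{1+\min\{s+\frac1p+\frac14-,\,1\}}$. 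The hard part is proving the multiplier bound with the sharp threshold $\beta>1-s-\tfrac1p$: a weaker threshold would degrade the convergence rate, and at the endpoint $p=\infty$ the Sobolev-embedding route degenerates, so one must execute the direct frequency decomposition and exploit the restriction $|k_1|\gg|k_2|$ to extract enough decay in $\langle k_1\rangle$ to close the sum against the merely $\hat l^\infty$ control of $\hat\xi$.
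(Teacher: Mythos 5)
There is a genuine gap, and it sits exactly at the hardest case the lemma must cover. Your very first step --- Minkowski in time plus the $L^2$-isometry of $\fe^{\pm i\rho\partial_x^2}$ --- reduces everything to a fixed-time multiplier bound $\|\xi g\|_{L^2}\lesssim \|\xi\|_{\hat b^{s,p}}\|g\|_{H^\beta}$. No such bound holds when $s+\frac1p\le\frac12$, for any $\beta$: take $g\equiv 1$ (which lies in every $H^\beta$) and $\xi=2\pi\delta\in\hat l^\infty$ (the case $s=0$, $p=\infty$, which is the flagship case of \cref{thm:main-N1}); then $\xi g=2\pi\delta\notin L^2$. The obstruction is visible in your own frequency decomposition: in the region $|k_1|\gg|k_2|$ the weight $\langle k_2\rangle^{-\beta}$ you pay against $\|g\|_{H^\beta}$ gives no decay in $k_1$, and the sum $\sum_{k_1}\langle k_1\rangle^{-sr}$ (with $\frac1r=\frac12-\frac1p$) diverges precisely when $s+\frac1p\le\frac12$. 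The restriction $|k_1|\gg|k_2|$ alone cannot "extract decay in $\langle k_1\rangle$" as you hope in your closing remark --- there is nothing there to extract. Consequently the pointwise-in-time integrand $\xi\,\fe^{i\rho\partial_x^2}(v(\rho)-v(t_n))$ is generically not in $L^2$ at all, and the estimate cannot be salvaged after Minkowski has been applied.

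What makes $R_1^n$ small in that regime is cancellation in the time integral, which your first step discards. The paper's proof keeps the phase $\fe^{i\rho(k^2-k_2^2)}$ inside the $\rho$-integral and, in the high-frequency part of the region $|k_1|\gg|k_2|$ (where $|k^2-k_2^2|\gtrsim|k_1|^2$), integrates by parts in $\rho$; the non-resonant denominator then supplies the factor $\langle k_1\rangle^{-2}$ that closes the $k_1$-sum against the merely $\hat l^\infty$ control of $\hat\xi$, at the cost of boundary terms and terms involving $\partial_\rho v$ (hence $\xi u$ and $|u|^2u$ again), which are estimated separately with the cutoff $N_0=\tau^{-1/2}$. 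Your argument, by contrast, does go through in the complementary regime: for $s+\frac1p>\frac12$ the multiplier bound with $\beta>1-s-\frac1p$ is correct (it is essentially the paper's estimates \eqref{est:R11} and \eqref{est:R12-low-case2}), and combined with \cref{lem:v-s-tn} it reproduces the claimed rate there more directly than the paper does. To complete the proof you must add the non-resonant integration-by-parts step for the high-frequency, $|k_1|\gg|k_2|$ contribution when $s+\frac1p\le\frac12$ (and, strictly, also verify that it is not needed or is harmless for $\frac12<s+\frac1p\le 1$, where the paper still performs it).
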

\begin{proof}
The analysis goes separately for the case $s+\frac1p>1$ and the case $s+\frac1p\leq1$.

\emph{Case I: $s+\frac1p>1$.}
Firstly, we consider the simpler case: $s+\frac1p>1$. In this case, by Cauchy-Schwartz's inequality, we have
$$
\big\|\xi\big\|_{L^\infty} \le \big\|\hat \xi\big\|_{l^1}
\le \big\|\langle k \rangle^{-s}\big\|_{l^{p'}} \|\xi\|_{\hat b^{s,p}}
\lesssim \|\xi\|_{\hat b^{s,p}}.
$$
By this inequality,  we find that
\begin{align*}
\big\|R_1^n\big\|_{L^2}
\lesssim &
\int_{t_n}^{t_{n+1}}\Big\|\fe^{-i\rho\partial_x^2}\Big[\xi\> \fe^{i\rho\partial_x^2}\big(v(\rho)-v(t_n)\big)\Big]\Big\|_{L^2}\,d\rho\\
\lesssim &
\int_{t_n}^{t_{n+1}}\Big\|\xi\> \fe^{i\rho\partial_x^2}\big(v(\rho)-v(t_n)\big)\Big\|_{L^2}\,d\rho\\
\lesssim &
\tau \|\xi\|_{L^\infty} \|v(\rho)-v(t_n)\|_{L^\infty_tL^2_x([t_n,t_{n+1}])}\\
\lesssim &
\tau \|\xi\|_{\hat b^{s,p}} \|v(\rho)-v(t_n)\|_{L^\infty_tL^2_x([t_n,t_{n+1}])}.
\end{align*}
By \cref{lem:v-s-tn}, it infers that
$$
\|v(\rho)-v(t_n)\|_{L^\infty_tL^2_x([t_n,t_{n+1}]\times\T)}
\le C \tau,
$$
and this further yields
\begin{align*}
\big\|R_1^n\big\|_{L^2}
\le
C\tau^2.
\end{align*}
This gives the desired estimate of the lemma.

\emph{Case II: $s+\frac1p\leq1$.} Now, we turn to consider the case: $s+\frac1p\le 1$. By taking the Fourier transform, we write
\begin{align*}
\widehat{(R_1^n)}_k=&i\int_{t_n}^{t_{n+1}}\sum\limits_{k_1+k_2=k}\fe^{i\rho(k^2-k_2^2)}\hat \xi_{k_1}
\big(\hat{v}_{k_2}(\rho)-\hat{v}_{k_2}(t_n)\big)\,d\rho\\
=&i\int_{t_n}^{t_{n+1}}\sum\limits_{\substack{k_1+k_2=k\\|k_1|\lesssim |k_2|}}\fe^{i\rho(k^2-k_2^2)}\hat \xi_{k_1}
\big(\hat{v}_{k_2}(\rho)-\hat{v}_{k_2}(t_n)\big)\,d\rho\\
&\quad +i\int_{t_n}^{t_{n+1}}\sum\limits_{\substack{k_1+k_2=k\\|k_1|\gg |k_2|}}\fe^{i\rho(k^2-k_2^2)}\hat \xi_{k_1}
\big(\hat{v}_{k_2}(\rho)-\hat{v}_{k_2}(t_n)\big)\,d\rho\\
\triangleq &
\widehat{(R_{11})}_k+\widehat{(R_{12})}_k.
\end{align*}

For $R_{11}$, by duality and Plancherel's identity we have
\begin{align*}
\|R_{11}\|_{L^2}
\lesssim &\tau \Bigg\|\sum\limits_{\substack{k=k_1+k_2\\|k_1|\lesssim |k_2|}}\big|\hat{\xi}_{k_1}\big|
\big|\hat{v}_{k_2}(\rho)-\hat{v}_{k_2}(t_n)\big|\Bigg\|_{L^\infty_t l_k^2([t_n,t_{n+1}])}\\
\lesssim &
\tau \Bigg\| \sup_{h:\|h\|_{l^2}=1} \sum\limits_{\substack{k=k_1+k_2\\|k_1|\lesssim |k_2|}} \big|\hat{\xi}_{k_1}\big|
|\hat{v}_{k_2}(\rho)-\hat{v}_{k_2}(t_n)| |h_{k_1+k_2}|\Bigg\|_{L^\infty_t([t_n,t_{n+1}])}.
\end{align*}
As before, we denote $\frac1r=\frac12-\frac1p$ and  $a=-s+\frac1r+\frac12+=-s-\frac1p+1+$. If $s\le \frac1r$,
the strategy here is to first sum up $k_1$ and then sum up $k_2$.
This gives
\begin{align}\label{est:R11-case1}
 \|R_{11}\|_{L^2}
  \lesssim  &
  \tau \|\xi\|_{\hat b^{s,p}}\Big\| \sum_{k_2}
 \big\|\langle k_1\rangle^{-s}\big\|_{l^r(\{k_1:|k_1|\lesssim |k_2|\})} |\hat{v}_{k_2}(\rho)-\hat{v}_{k_2}(t_n)|  \Big\|_{L^\infty_t([t_n,t_{n+1}])}\notag\\
   \lesssim  &
  \tau \|\xi\|_{\hat b^{s,p}}\Big\| \sum_{k_2}
\langle k_2\rangle^{-s+\frac1r+} |\hat{v}_{k_2}(\rho)-\hat{v}_{k_2}(t_n)|  \Big\|_{L^\infty_t([t_n,t_{n+1}])}\notag\\
  \lesssim  &
  \tau \|\xi\|_{\hat b^{s,p}}\|v(t)-v(t_n)\|_{L^\infty_tH^a_x([t_n,t_{n+1}])}.
\end{align}
 If $s> \frac1r$, now our strategy is to first sum up $k_2$ and then sum up $k_1$.  This gives
\begin{align*}
 \|R_{11}\|_{L^2}
  \lesssim  &
  \tau \Bigg\| \sup_{h:\|h\|_{l^2}=1} \sum\limits_{\substack{k=k_1+k_2\\|k_1|\lesssim |k_2|}} \langle k_1\rangle^s\big|\hat{\xi}_{k_1}\big|\>
\langle k_2\rangle^a|\hat{v}_{k_2}(\rho)-\hat{v}_{k_2}(t_n)| |h_{k_1+k_2}| \langle k_1\rangle^{-s} \langle k_2\rangle^{-a} \Bigg\|_{L^\infty_t}\\
\lesssim &
  \tau \|v(t)-v(t_n)\|_{L^\infty_tH^a_x([t_n,t_{n+1}]\times\T)}  \sum_{k_1}
\langle k_1\rangle^s\big|\hat{\xi}_{k_1}\big|\>  \langle k_1\rangle^{-s-a}\notag\\
   \lesssim  &
  \tau \|\xi\|_{\hat b^{s,p}}\|v(t)-v(t_n)\|_{L^\infty_tH^a_x([t_n,t_{n+1}]\times\T)}  \Big\| \langle k_1\rangle^{-s-a} \Big\|_{l^{p'}_{k_1}}.
\end{align*}
Since $s> \frac1r$, we have that $s+a>\frac1{p'}$, and thus it gives
\begin{align*}
 \|R_{11}\|_{L^2}
  \lesssim
  \tau \|\xi\|_{\hat b^{s,p}}\|v(t)-v(t_n)\|_{L^\infty_tH^a_x([t_n,t_{n+1}]\times\T)}.
\end{align*}
Combining with \eqref{est:R11-case1}, we have that for either $s\le \frac1r$ or $s>\frac1r$,
\begin{align}\label{est:R11}
 \|R_{11}\|_{L^2}
  \lesssim
  \tau \|\xi\|_{\hat b^{s,p}}\|v(t)-v(t_n)\|_{L^\infty_tH^a_x([t_n,t_{n+1}]\times\T)}.
\end{align}

For $R_{12}$, we first consider the low-frequency part to have
\begin{align*}
  \|P_{\leq N_0}R_{12}\|_{L^2}
  \lesssim &
   \tau\Bigg\|\sum_{\substack{k=k_1+k_2\\|k_1|\gg |k_2|}}\big|\hat{\xi}_{k_1}\big|
  |\hat{v}_{k_2}(\rho)-\hat{v}_{k_2}(t_n)|\Bigg\|_{L^\infty_tl_k^2([t_n,t_{n+1}]\times\{k:|k|\leq N_0\})}\\
  \lesssim &
  \tau\Bigg\|\sup\limits_{h:\|h\|_{l^2}=1}\sum_{\substack{|k_1|\gg|k_2|\\|k_1+k_2|\leq N_0}}\big|\hat{\xi}_{k_1}\big|
  |\hat{v}_{k_2}(\rho)-\hat{v}_{k_2}(t_n)||h_{k_1+k_2}|\Bigg\|_{L^\infty_t([t_n,t_{n+1}])}\\
  \lesssim&
  \tau\|\xi\|_{\hat{b}^{s,p}}\Big\|\sum_{k_2}\big\|\langle k_1\rangle^{-s}\big\|_{l_{k_1}^r(\{k_1:|k_2|\ll |k_1|\lesssim N_0\})}
  |\hat{v}_{k_2}(\rho)-\hat{v}_{k_2}(t_n)|\Big\|_{L^\infty_t}.
\end{align*}
Notice that
\begin{equation*}
\big\|\langle k_1\rangle^{-s}\big\|_{l_{k_1}^r(\{k_1:|k_2|\ll |k_1|\lesssim N_0\})}
\lesssim
\left\{\begin{split}&N_0^{-s+1/r+},\quad\,-s+1/r\geq0,\\
&\langle k_2\rangle^{-s+1/r},\quad   -s+1/r<0.\end{split}\right.
\end{equation*}
When $s+\frac1p\leq\frac12\Leftrightarrow s\leq \frac1r$, we can then have
\begin{align}\label{est:R12-low-case1}
  \|P_{\leq N_0}R_{12}\|_{L^2}
   \lesssim&
  \tau\|\xi\|_{\hat{b}^{s,p}}\Big\|\sum_{k_2}
  |\hat{v}_{k_2}(\rho)-\hat{v}_{k_2}(t_n)|\Big\|_{L^\infty_t}\notag\\
  \lesssim &
  \tau N_0^{-s+\frac1r+}\|\xi\|_{\hat{b}^{s,p}}\|v(\rho)-v(t_n)\|_{L^\infty_tH_x^{\frac12+}}.
  \end{align}
When $s+\frac1p>\frac12\Leftrightarrow s> \frac1r$, noting that $a=-s+\frac1r+\frac12+$,  we then find
\begin{align}\label{est:R12-low-case2}
  \|P_{\leq N_0}R_{12}\|_{L^2}
   \lesssim&
  \tau\|\xi\|_{\hat{b}^{s,p}}\Big\|\sum_{k_2}\langle k_2\rangle^{-s+\frac1r}
  |\hat{v}_{k_2}(\rho)-\hat{v}_{k_2}(t_n)|\Big\|_{L^\infty_t}\notag\\
  \lesssim & \tau \|\xi\|_{\hat{b}^{s,p}}\|v(\rho)-v(t_n)\|_{L^\infty_tH_x^{a}}.
  \end{align}

Now we consider the high-frequency part of $R_{12}$.
By integration-by-parts we have
\begin{align*}
  \widehat{(R_{12})}_k=&i\sum_{\substack{k=k_1+k_2\\|k_1|\gg |k_2|}}\frac{\fe^{it_{n+1}(k^2-k_2^2)}}{i(k^2-k_2^2)}
  \hat{\xi}_{k_1}[\hat{v}_{k_2}(t_{n+1})-\hat{v}_{k_2}(t_n)]
  \\
   &-i\int_{t_n}^{t_{n+1}}\sum\limits_{\substack{k_1+k_2=k\\|k_1|\gg |k_2|}}\frac{\fe^{i\rho(k^2-k_2^2)}}{i(k^2-k_2^2)}\hat \xi_{k_1}\partial_\rho
\big[\hat{v}_{k_2}(\rho)-\hat{v}_{k_2}(t_n)\big]\,d\rho\\
\triangleq &
 \widehat{(R_{121})}_k+\widehat{(R_{122})}_k.
\end{align*}
For $R_{121}$, by duality and Plancherel's identity  we have
\begin{align*}
  \big\|P_{>N_0}R_{121}\big\|_{L^2}
  \lesssim &
  \sup\limits_{h:\|h\|_{l^2}=1}\sum_{\substack{|k_1|\gg |k_2|\\|k_1+k_2|> N_0}}\frac{1}{k_1^2}
  \big|\hat{\xi}_{k_1}\big||\hat{v}_{k_2}(t_{n+1})-\hat{v}_{k_2}(t_n)||h_{k_1+k_2}|
  \\
   =&
   \sup\limits_{h:\|h\|_{l^2}=1}\sum_{\substack{|k_1|\gg |k_2|\\|k_1+k_2|> N_0}}\langle k_1\rangle^s
  \big|\hat{\xi}_{k_1}\big||\hat{v}_{k_2}(t_{n+1})-\hat{v}_{k_2}(t_n)||h_{k_1+k_2}| \langle k_1\rangle^{-s-2}\\
  \lesssim &\|v(t_{n+1})-v(t_n)\|_{L_x^{2}} \sum_{k_1:|k_1|\gtrsim N_0}\langle k_1\rangle^s  \big|\hat{\xi}_{k_1}\big|\>
  \langle k_1\rangle^{-s-2}\\
   \lesssim &
  \|\xi\|_{\hat{b}^{s,p}}\|v(t_{n+1})-v(t_n)\|_{L_x^{2}} \big\|\langle k_1\rangle^{-s-2}\big\|_{l^{p'}(\{k_1:|k_1|\gtrsim N_0\})}.
\end{align*}
Note  that $-s-2+\frac1{p'}=-s-\frac1p-1<0$. Then we further have
\begin{align}\label{est:R121}
  \big\|P_{>N_0}R_{121}\big\|_{L^2}
  \lesssim  N_0^{-s-\frac1p-1}\|\xi\|_{\hat{b}^{s,p}}\|v(t_{n+1})-v(t_n)\|_{L_x^{2}}.
  \end{align}

 For $R_{122}$, by noting $\partial_tv=i\fe^{-it\partial_x^2}[\xi u+|u|^2u]$, we have
\begin{align*}
\widehat{(R_{122})}_k=&\int_{t_n}^{t_{n+1}}\sum\limits_{\substack{k_2+k_3=\tilde{k}_2\\|k_1|\gg |\tilde{k}_2|}}\frac{\fe^{i\rho(k^2-\tilde{k}_2^2)}}{i(k^2-\tilde{k}_2^2)}\hat\xi_{k_1}
\hat\xi_{k_2}\hat{v}_{k_3}(\rho)d\rho\\
&+\int_{t_n}^{t_{n+1}}\sum\limits_{\substack{|k_1|\gg |k_2|}}\frac{\fe^{i\rho k_1^2}}{i(k^2-k_2^2)}\hat\xi_{k_1}
\widehat{(|u|^2u)}_{k_2}(\rho)d\rho\\
\triangleq  & \widehat{(R_{1221})}_k+\widehat{(R_{1222})}_k,
\end{align*}
and we shall estimate the two separately.

For $R_{1221}$,  similarly as before we have
\begin{align*}
&\|P_{>N_0}R_{1221}\|_{L^2}\\
\lesssim &
\tau\Bigg\|\sup_{h:\|h\|_{l^2}=1}\sum\limits_{\substack{k_1, k_2, k_3\\|k_1|\gg |k_2+k_3|,|k_1+k_2+k_3|>N_0}}
\frac{\big|\hat\xi_{k_1}\big|
\big|\hat\xi_{k_2}\big|}{(k_1+k_2+k_3)^2}|\hat{v}_{k_3}(t)||h_{k_1+k_2+k_3}|\Bigg\|_{L_t^\infty([0,T])}.
\end{align*}
Now we  change the variables and use the relationship $|k_1+k_2+k_3|\sim |k_1|$ to obtain that
\begin{align*}
&\|P_{>N_0}R_{1221}\|_{L^2}\\
\lesssim &
\tau\Bigg\|\sup_{h:\|h\|_{l^2}=1}\sum\limits_{\substack{k_1, \tilde{k}_2, k_3\\|k_1|\gg |\tilde k_2|, |k_1|\gtrsim N_0}}
\langle k_1\rangle^{-2} \big|\hat\xi_{k_1}\big|
\big|\hat\xi_{\tilde{k}_2-k_3}\big||\hat{v}_{k_3}| \big|h_{k_1+\tilde k_2}\big|  \Bigg\|_{L_t^\infty([0,T])}\\
=&\tau\Bigg\|\sup_{h:\|h\|_{l^2}=1}\sum\limits_{\substack{k_1, \tilde{k}_2, k_3\\|k_1|\gg |\tilde k_2|, |k_1|\gtrsim N_0}}
\langle k_1\rangle^s\big|\hat\xi_{k_1} \big| \> \langle\tilde{k}_2-k_3\rangle^s
\big|\hat\xi_{\tilde{k}_2-k_3}\big| \> |\hat{v}_{k_3}| \> \big|h_{k_1+\tilde k_2}\big|\\
&\qquad\qquad\qquad \qquad\qquad \cdot \langle k_1\rangle^{-2-s}\langle \tilde{k}_2-k_3\rangle^{-s}\Bigg\|_{L_t^\infty([0,T])}\\
\lesssim&
\tau \|\xi\|_{\hat{b}^{s,p}}\sum_{\tilde k_2, k_3}  \big\| \langle k_1\rangle^{-2-s}\big\|_{l^r(\{k_1: |k_1|\gtrsim \max\{|\tilde k_2|,N_0\}\})}
\langle\tilde{k}_2-k_3\rangle^{-s}  \langle k_1\rangle^s\big|\hat\xi_{k_1}\big| \>   |\hat{v}_{k_3}(t)|.
\end{align*}
Note that
$$
\big\| \langle k_1\rangle^{-2-s}\big\|_{l^r(\{k_1: |k_1|\gtrsim \max\{|\tilde k_2|,N_0\}\})}
\lesssim N_0^{-2s-\frac2p-\frac12+} \big|\tilde k_2\big|^{s+\frac1p-1-} .
$$
This gives
\begin{align*}
\|P_{>N_0}R_{1221}\|_{L^2}
\lesssim &
\tau N_0^{-2s-\frac2p-\frac12+} \|\xi\|_{\hat{b}^{s,p}}\sum_{\tilde k_2, k_3}\big\langle\tilde k_2\big\rangle^{s+\frac1p-1-} \big\langle \tilde{k}_2-k_3\big\rangle^{-s}    \langle \tilde{k}_2-k_3\rangle^s\big|\hat\xi_{\tilde{k}_2-k_3}\big| \>   |\hat{v}_{k_3}(t)|\\
\lesssim &
\tau  N_0^{-2s-\frac2p-\frac12+}  \|\xi\|_{\hat{b}^{s,p}}^2 \Big\|\big\langle\tilde k_2\big\rangle^{s+\frac1p-1-} \big\langle\tilde{k}_2-k_3\big\rangle^{-s}\Big\|_{l^{p'}_{\tilde{k}_2}} \sum_{ k_3}  |\hat{v}_{k_3}(t)|.
\end{align*}
Since now $s+\frac1p\le 1$, we have
$$
\Big\|\big\langle\tilde k_2\big\rangle^{s+\frac1p-1-} \big\langle\tilde{k}_2-k_3\big\rangle^{-s}\Big\|_{l^{p'}_{\tilde{k}_2}}
 \lesssim 1.
$$
This further implies
\begin{align*}
\|P_{>N_0}R_{1221}\|_{L^2}
\lesssim &
\tau N_0^{-2s-\frac2p-\frac12+}  \|\xi\|_{\hat{b}^{s,p}}^2 \sum_{ k_3}  |\hat{v}_{k_3}(t)|\\
\lesssim &
\tau N_0^{-2s-\frac2p-\frac12+}  \|\xi\|_{\hat{b}^{s,p}}^2\|v\|_{L^\infty_tH^{\frac12+}_x([0,T])}  .
\end{align*}

For $R_{1222}$,  we have  similarly
\begin{align*}
\|P_{>N_0}R_{1222}\|_{L^2}
\lesssim &
\tau\Bigg\|\sup_{h:\|h\|_{l^2}=1}\sum\limits_{\substack{|k_1|\gg |k_2|,|k_1|\gtrsim N_0}}\frac{1}{(k_1+k_2)^2}
\big|\hat\xi_{k_1}\big|\widehat{(|u|^2u)}_{k_2}| |h_{k_1+k_2}|\Bigg\|_{L_t^\infty([0,T])}\\
\lesssim &
\tau\Bigg\|\sup_{h:\|h\|_{l^2}=1}\sum\limits_{\substack{|k_1|\gg |k_2|,|k_1|\gtrsim N_0}}\langle k_1\rangle^s
\big|\hat\xi_{k_1}\big|\>\big|\widehat{(|u|^2u)}_{k_2}\big||h_{k_1+k_2}|   \langle k_1\rangle^{-2-s}\Bigg\|_{L_t^\infty([0,T])}\\
\lesssim &
\tau\|\xi\|_{\hat{b}^{s,p}}\big\||k|^{-2-s}\big\|_{l^r(|k|\gtrsim N_0)}\sum_{k_2}\big|\widehat{(|u|^2u)}_{k_2}\big|\\
\lesssim&
\tau N_0^{-2-s+\frac1r}\|\xi\|_{\hat{b}^{s,p}}\|u^3\|_{L^\infty_tH_x^{\frac12+}}
\lesssim
\tau N_0^{-s-\frac1p-\frac32}\|\xi\|_{\hat{b}^{s,p}}\|u\|_{L^\infty_tH_x^{\frac12+}}^3.
\end{align*}
Since $s+\frac1p\le 1$, we have that $-s-\frac1p-\frac32\le -2s-\frac2p-\frac12$, which implies that
\begin{align*}
\|P_{>N_0}R_{1222}\|_{L^2}
\lesssim &
\tau N_0^{-2s-\frac2p-\frac12+}\|\xi\|_{\hat{b}^{s,p}}\|u\|_{L^\infty_tH_x^{\frac12+}}^3.
\end{align*}
Collecting the estimates on $R_{1221}$ and $R_{1222}$, we have
\begin{align}\label{est:R122}
\|P_{>N_0}R_{122}\|_{L^2}\le C \tau N_0^{-2s-\frac2p-\frac12+}.
\end{align}

Together with \eqref{est:R11},  \eqref{est:R12-low-case1}, \eqref{est:R12-low-case2},  \eqref{est:R121} and \eqref{est:R122},
we find in total for $s+\frac1p\leq\frac12$ that
\begin{align*}
 \|R_1^n\|\leq& C\tau\|v(t)-v(t_n)\|_{L_t^\infty H_x^a([t_n,t_{n+1}]\times\T)}+
 C\tau N_0^{-s-\frac1p+\frac12+}\|v(t)-v(t_n)\|_{L_t^\infty H_x^{\frac12+}([t_n,t_{n+1}]\times\T)}\\
 &+CN_0^{-s-\frac1p-1}\|v(t_{n+1})-v(t_n)\|_{L^2}+C\tau N_0^{-2s-\frac2p-\frac12+},
\end{align*}
with $a=-s-\frac1p+1+$. For $\frac12<s+\frac1p\le 1$,
\begin{align*}
 \|R_1^n\|\leq& C\tau\|v(t)-v(t_n)\|_{L_t^\infty H_x^a([t_n,t_{n+1}]\times\T)}+
 CN_0^{-s-\frac1p-1}\|v(t_{n+1})-v(t_n)\|_{L^2}\\
 &+C\tau N_0^{-2s-\frac2p-\frac12+}.
\end{align*}
Now we can apply \cref{lem:v-s-tn} to the above findings to get
\begin{align*}
 &\|v(t)-v(t_n)\|_{L_t^\infty H_x^a([t_n,t_{n+1}]\times\T)}\leq C\min\left\{\tau^{\frac14+s+\frac1p-},\tau\right\},\\
& \|v(t)-v(t_n)\|_{L_t^\infty H_x^{\frac12+}([t_n,t_{n+1}]\times\T)}\le C\min\left\{\tau^{\frac12(s+\frac1p)-},\tau\right\},\\
 &\|v(t_{n+1})-v(t_n)\|_{L^2}\le  C\min\left\{\tau^{\frac34+\frac12(s+\frac1p)-},\tau\right\}.
\end{align*}
Therefore, by choosing $N_0=\tau^{-\frac12}$, we find that for $s+\frac1p\leq\frac12$,
\begin{align*}
 \|R_1^n\|_{L^2}\leq& C\tau \tau^{\frac14+(s+\frac1p)-}+C \tau \tau^{\frac12(s+\frac1p)+\frac14-}\tau^{\frac12(s+\frac1p)-}\\
 &\quad +C\tau^{\frac12(s+\frac1p)+\frac12}\tau^{\frac34+\frac12(s+\frac1p)-}+C\tau \tau^{s+\frac1p+\frac14-}\\
 \le &C\tau \tau^{s+\frac1p+\frac14-}.
\end{align*}
For $\frac12<s+\frac1p\leq1$,
\begin{align*}
 \|R_1^n\|_{L^2}\leq& C\tau \min\big\{\tau^{s+\frac1p+\frac14-},\tau\big\}
  +C\tau^{\frac12(s+\frac1p)+\frac12}\min\big\{\tau^{\frac34+\frac12(s+\frac1p)-},\tau\big\}
  +C\tau \tau^{s+\frac1p+\frac14-}\\
 \le &C\tau \min\big\{\tau^{s+\frac1p+\frac14-},\tau\big\}.
\end{align*}
Together with the two estimates above, we obtain that for $0\le s+\frac1p\leq1$,
\begin{align*}
 \|R_1^n\|_{L^2}
 \le &C\tau \min\left\{\tau^{\frac14+s+\frac1p-},\tau\right\}.
\end{align*}
This finishes the proof of the lemma.
\end{proof}

\begin{lemma}[Local error from $R_2^n$]\label{lem:est-Rn2}
Let $\xi\in \hat b^{s,p}$ with the conditions of \cref{thm:main-N1} satisfied, and $u\in L^\infty_t H_x^{s+\gamma_p-}([0,T]\times\T)$. Then,
\begin{align}
\big\|R_2^n\big\|_{L^2}
\le  C\tau^{1+\min\{\frac14+\frac1p+s-,1\}},
\end{align}
where the constant $C>0$ only depends on $\|\xi\|_{\hat b^{s,p}}$ and $\|u\|_{L^\infty_t H_x^{s+\gamma_p-}}$.
\end{lemma}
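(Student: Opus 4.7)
\emph{Proof plan.} The plan is to mirror the analysis of $R_1^n$ in Lemma~\ref{lem:est-Rn1}, with Lemma~\ref{lem:average2} supplying the smallness in place of the time differences $v(\rho)-v(t_n)$. Setting
\[
D(k,k_2)\triangleq\mathcal{M}_\tau(\fe^{is(k^2-k_2^2)})-\mathcal{M}_\tau(\fe^{isk^2})\mathcal{M}_\tau(\fe^{-isk_2^2}),
\]
Parseval reduces the claim to an $l_k^2$-estimate on $\widehat{R_2^n}_k$. Lemma~\ref{lem:average2} with $\alpha=k^2$, $\beta=-k_2^2$ supplies two complementary controls: for $k,k_2\ne0$, the product bound $|D|\lesssim\tau\min(k^2,k_2^2)$; and for $\phi\triangleq k^2-k_2^2\ne0$, the non-resonance bound $|D|\lesssim\tau^{-1}|\phi|^{-1}$. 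The phase factors $\fe^{\pm it_n(\cdot)^2}$ are unitary and drop out of every $l^2_k$-estimate via the substitutions $\tilde v=\fe^{-it_n\partial_x^2}v(t_n)$ and $\tilde h_k=\fe^{-it_nk^2}h_k$.

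First I would handle the resonant piece $k_2=\pm k$, where $\phi=0$. The two diagonal contributions reduce to $D=1-|\mathcal{M}_\tau(\fe^{isk^2})|^2$ multiplied by $\hat\xi_0$ or $\hat\xi_{2k}$ respectively, with $|D|\lesssim\min(\tau k^2,1)$ and $|\hat\xi_0|,|\hat\xi_{2k}|\le\|\hat\xi\|_{l^\infty}\lesssim\|\xi\|_{\hat b^{s,p}}$. Using Plancherel together with the interpolation $\min(\tau k^2,1)\le(\tau k^2)^\theta$ for $\theta=\min\{1,(s+\gamma_p-)/2\}$ and invoking $v\in H^{s+\gamma_p-}$ from Theorem~\ref{main:thm1} should yield $\|R_2^{n,\mathrm{res}}\|_{L^2}\lesssim\tau\min\{\tau^{(s+\gamma_p)/2-},\tau\}$; since $\gamma_p=3/2+1/p$, an elementary case analysis shows this exceeds the target $\tau^{1+\min(s+1/p+1/4-,1)}$ on both sides of the threshold $s+1/p=3/4$.

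For the non-resonant part I would introduce the cutoff $N_0=\tau^{-1/2}$ and treat the two regimes $\max(|k|,|k_2|)\le N_0$ and $\max(|k|,|k_2|)>N_0$ separately. In the low regime, choosing whichever of $|D|\le\tau k^2$ or $|D|\le\tau k_2^2$ is tighter in each sub-region reduces the task to bounding the restricted convolution $\tau^2\bigl\|\,|\hat\xi|\ast(k_2^2|\hat v_{k_2}|)\,\bigr\|_{l^2_k(|k|\le N_0)}$ by Young's inequality $l^2_{k_1}\ast l^1_{k_2}\hookrightarrow l^2_k$; the two weighted norms $\|\hat\xi\|_{l^2(|k_1|\le N_0)}\lesssim N_0^{1/2-s-1/p+}\|\xi\|_{\hat b^{s,p}}$ (Hölder on the $\hat b^{s,p}$-structure) and $\sum_{|k_2|\le N_0}k_2^2|\hat v_{k_2}|\lesssim N_0^{5/2-(s+\gamma_p-)}\|v\|_{H^{s+\gamma_p-}}$ (Cauchy--Schwarz) multiply to $N_0^{3/2-2s-2/p+}$, and the $\tau^2$ prefactor produces exactly $\tau^{5/4+s+1/p-}$, matching the target. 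In the high regime, I would further split by whether $|k|>N_0$ or $|k|\le N_0<|k_2|$; in each case $|k_1|$ is localized in a window of size $N_0$ around the large frequency, so the same product bound $|D|\lesssim\tau\min(k^2,k_2^2)$ combined with a Cauchy--Schwarz on the $N_0$-window of $\hat\xi$ using the $\hat b^{s,p}$-norm, entirely parallel to the treatment of $I_{12}$ in the proof of Lemma~\ref{lem:Phi-i} and of $P_{>N_0}R_{12}$ in the proof of Lemma~\ref{lem:est-Rn1}, reproduces the same combined factor $\tau^2 N_0^{3/2-2s-2/p+}=\tau^{5/4+s+1/p-}$.

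The main obstacle is that the $L^2$-norm on the left leaves no derivative slack: every factor of $|k|$ or $|k_2|$ extracted from $|D|$ must be absorbed through summability of $\hat\xi_{k_1}$ in the anisotropic $\hat b^{s,p}$-norm (which is only $l^p$, not $l^1$, in the sharpest case $p=\infty$) and through the precise Sobolev regularity $v\in H^{s+\gamma_p-}$ from Theorem~\ref{main:thm1}. The cutoff $N_0=\tau^{-1/2}$ is dictated by the balance of the product bound $|D|\lesssim\tau\min(k^2,k_2^2)$ against the non-resonance bound $|D|\lesssim\tau^{-1}|\phi|^{-1}$, and together they force exactly the exponent $s+1/p+1/4$, reproducing the familiar $\tau^{5/4-}$ rate in the roughest case $\xi\in\hat l^\infty$. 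The saturation at $\tau^2$ is the intrinsic Lie--Trotter order already visible in the resonant analysis once $s+1/p\ge3/4$.
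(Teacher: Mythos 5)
Your treatment of the resonant set $k_2=\pm k$ is fine, and you have correctly identified the target exponent $a=\min\{\frac14+\frac1p+s-,1\}$, but the non-resonant analysis has a genuine gap. The paper uses no frequency cutoff here at all: it takes the \emph{interpolated} consequence of \cref{lem:average2}, namely $|D(k,k_2)|\lesssim \tau^{a}\min\{|k|^{2}|k_2|^{2a-2},\,|k_2|^{2}|k|^{2a-2}\}$ for $0\le a\le1$, fixes $a=\min\{\frac14+\frac1p+s-,1\}$ once, splits only into $|k|\lesssim|k_2|$ versus $|k|\gg|k_2|$, and closes by duality and Cauchy--Schwarz against $\|\xi\|_{\hat b^{s,p}}$ and $\|v\|_{H^{s+\gamma_p-}}$ (the whole point being that $2a-2s-\gamma_p+\frac1r<-\frac12$ with $\frac1r=\frac12-\frac1p$). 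Your plan restricts itself to the two endpoint bounds $|D|\lesssim\tau\min(k^2,k_2^2)$ and $|D|\lesssim\tau^{-1}|\phi|^{-1}$ plus the cutoff $N_0=\tau^{-1/2}$, and this does not close. The critical region is $|k|>N_0$ \emph{and} $|k_2|>N_0$ (so $|k|\sim|k_2|$ large and $k_1=k-k_2$ small): there $\tau\min(k^2,k_2^2)\ge\tau N_0^2=1$, so the product bound is vacuous; the claim that $k_1$ is ``localized in a window of size $N_0$ around the large frequency'' is false, since $k_1$ ranges over all of $\Z$ as $k_2$ does; and the non-resonance bound gives only $|D|\lesssim\tau^{-1}\langle k_1\rangle^{-1}|k_2|^{-1}$, which after summing (using $\hat\xi\in l^\infty$, $\langle k_1\rangle^{-1}\in l^2$, and the tail decay of $\hat v$) yields at best $\tau^{1+\frac12(s+\frac1p)}$ --- strictly short of the required $\tau^{1+\min\{\frac14+s+\frac1p-,1\}}$ for all $s+\frac1p<2$. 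Only a genuine interpolation between the two endpoint bounds, i.e.\ exactly the $\tau^{a}$-bound above, recovers the missing $\tau^{\frac14+s+\frac1p}$.

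A secondary problem sits in your low regime: the two factors $\|\hat\xi\|_{l^2(|k_1|\lesssim N_0)}\lesssim\max\{N_0^{\frac12-s-\frac1p+},1\}$ and $\sum_{|k_2|\le N_0}k_2^2|\hat v_{k_2}|\lesssim\max\{N_0^{1-s-\frac1p+},1\}$ each saturate at $O(1)$ once their exponents turn negative, whereas you multiply the two formal powers to get $N_0^{\frac32-2s-\frac2p+}$. That product is only legitimate for $s+\frac1p\le\frac12$; for $\frac12<s+\frac1p<1$ the actual outcome is $\tau^2N_0^{1-s-\frac1p+}=\tau^{\frac32+\frac12(s+\frac1p)-}$, which again falls below the claimed $\tau^{\frac54+s+\frac1p-}$. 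Both defects have the same cure: replace the cutoff-plus-endpoint-bounds strategy by the single interpolated estimate with $a=\min\{\frac14+\frac1p+s-,1\}$, after which the argument reduces to the bilinear convolution estimates you already know from \cref{lem:Phi-i} and \cref{lem:est-Rn1}.
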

\begin{proof} Note that
$$
\big|\widehat{(R^n_2)}_k \big|
\lesssim
\tau \sum\limits_{k_1+k_2=k}\Big| \mathcal M_\tau\big(\fe^{is(k^2-k_2^2)}\big)- \mathcal M_\tau\big(\fe^{isk^2}\big)\mathcal M_\tau\big(\fe^{-isk_2^2}\big)\Big| \big|\hat \xi_{k_1}\big|
|\hat v_{k_2}(t_n)|.
$$
If $k=0$ or $k_2=0$, then
$$
 \mathcal M_\tau\big(\fe^{is(k^2-k_2^2)}\big)- \mathcal M_\tau\big(\fe^{isk^2}\big)\mathcal M_\tau\big(\fe^{-isk_2^2}\big)=0,
$$
and thus $R^n_2=0$.  Therefore, we may assume that $k\ne 0$ and $k_2\ne 0$ in the following.
By \cref{lem:average2}, we have that for any $0\le a\le 1$,
\begin{align*}
\Big| \mathcal M_\tau\big(\fe^{is(k^2-k_2^2)}\big)- \mathcal M_\tau\big(\fe^{isk^2}\big)\mathcal M_\tau\big(\fe^{-isk_2^2}\big)\Big|
\lesssim & \tau^a \min\{|k|^{2}|k_2|^{2a-2},|k_2|^{2}|k|^{2a-2}\}.
\end{align*}
In the following, we set
$$
a=\min\left\{\frac14+\frac1p+s-,1\right\}.
$$
Accordingly, we write
\begin{subequations}\label{Rn2}
 \begin{align}
\big|\widehat{(R^n_2)}_k \big|
\lesssim &
\tau \sum\limits_{\substack{k_1+k_2=k\\|k|\lesssim |k_2|}}\Big| \mathcal M_\tau\big(\fe^{is(k^2-k_2^2)}\big)- \mathcal M_\tau\big(\fe^{isk^2}\big)\mathcal M_\tau\big(\fe^{-isk_2^2}\big)\Big| \big|\hat \xi_{k_1}\big|
|\hat v_{k_2}(t_n)|\label{Rn2-1}\\
&+\tau \sum\limits_{\substack{k_1+k_2=k\\ |k|\gg |k_2|}}\Big| \mathcal M_\tau\big(\fe^{is(k^2-k_2^2)}\big)- \mathcal M_\tau\big(\fe^{isk^2}\big)\mathcal M_\tau\big(\fe^{-isk_2^2}\big)\Big| \big|\hat \xi_{k_1}\big||\hat v_{k_2}(t_n)|.
\label{Rn2-2}
 \end{align}
\end{subequations}
For \eqref{Rn2-1}, we apply the estimate:
\begin{align*}
\Big| \mathcal M_\tau\big(\fe^{is(k^2-k_2^2)}\big)- \mathcal M_\tau\big(\fe^{isk^2}\big)\mathcal M_\tau\big(\fe^{-isk_2^2}\big)\Big|
\lesssim & \tau^a \langle k\rangle^{2} |k_2|^{2a-2},
\end{align*}
and then by duality, we have
\begin{equation}\label{est:Rn2-1}
 \begin{aligned}
\|\eqref{Rn2-1}\|_{l^2_k}
\lesssim &\tau^{1+a} \Bigg\|\sum\limits_{\substack{k_1+k_2=k\\|k|\lesssim  |k_2|}}
\langle k\rangle^{2}\langle k_2\rangle^{2a-2}
\big|\hat \xi_{k_1}\big| |\hat v_{k_2}(t_n)|\Bigg\|_{l^2_k}\\
\lesssim &
\tau^{1+a}   \sup\limits_{h:\|h\|_{l^2}=1}
\sum\limits_{\substack{k_1+k_2=k\\|k_1|\lesssim  |k_2|}}
\langle k\rangle^{2}\langle k_2\rangle^{2a-2}
\big|\hat \xi_{k_1}\big| |\hat v_{k_2}(t_n)| |h_k|\\
\lesssim &
\tau^{1+a}   \sup\limits_{h:\|h\|_{l^2}=1}
\sum\limits_{\substack{k_1,k_2\\|k|\lesssim |k_2|}}
\langle k_1\rangle^{-s}\langle k_1+k_2\rangle^{2}\langle k_2\rangle^{2a-2-s-\gamma_p-} \\
&\qquad\qquad\qquad\qquad\cdot
\langle k_1\rangle^s\big|\hat \xi_{k_1}\big|\>\langle k_2\rangle^{s+\gamma_p-} |\hat v_{k_2}(t_n)| |h_{k_1+k_2}|.
\end{aligned}
\end{equation}
In the case when $s+\frac1p\le \frac12$, by Cauchy-Schwartz's inequality, we further get
\begin{align*}
\|\eqref{Rn2-1}\|_{l^2_k}
 \lesssim &
\tau^{1+a}   \sup\limits_{h:\|h\|_{l^2}=1}
\sum\limits_{k_2}
\sum\limits_{k_1: |k_1|\lesssim |k_2|}
\langle k_1\rangle^{-s}\langle k_2\rangle^{2a-s-\gamma_p-} \\
&\qquad\qquad\qquad\qquad\qquad\quad\cdot
\langle k_1\rangle^s\big|\hat \xi_{k_1}\big|\>\langle k_2\rangle^{s+\gamma_p-} |\hat v_{k_2}(t_n)||h_{k_1+k_2}| \\
\lesssim &
\tau^{1+a}  \big\|\xi\big\|_{\hat b^{s,p}}
\sum\limits_{k_2}\big\|\langle k_1\rangle^{-s}\big\|_{l^r\{k_1: |k_1|\lesssim |k_2|\}}
\langle k_2\rangle^{2a-s-\gamma_p-}
\langle k_2\rangle^{s+\gamma_p-} |\hat v_{k_2}(t_n)| \\
\lesssim &
\tau^{1+a}  \big\|\xi\big\|_{\hat b^{s,p}}
\sum\limits_{k_2}
\langle k_2\rangle^{2a-2s-\gamma_p+\frac1r-}
\langle k_2\rangle^{s+\gamma_p-} |\hat v_{k_2}(t_n)|,
\end{align*}
where $\frac1r=\frac12-\frac1p$. Note that
$$
2a-2s-\gamma_p+\frac1r<-\frac12,
$$
we further find
\begin{align*}
\|\eqref{Rn2-1}\|_{l^2_k}
\lesssim &
\tau^{1+a}  \|\xi\|_{\hat b^{s,p}}
\|v(t_n)\|_{H^{s+\gamma_p-}_x}.
\end{align*}

\noindent In the case when $s+\frac1p> \frac12$, noting that
$$
2a-s-\gamma_p<0\quad \mbox{ and } \quad 2a-2s-\gamma_p+\frac1{p'}<0,
$$
 by \eqref{est:Rn2-1} and Cauchy-Schwartz's inequality, we get
\begin{align*}
\|\eqref{Rn2-1}\|_{l^2_k}
 \lesssim &
\tau^{1+a}   \sup\limits_{h:\|h\|_{l^2}=1}
\sum\limits_{k_1}
\sum\limits_{k_2: |k_2|\gtrsim |k_1|}
\langle k_1\rangle^{2a-2s-\gamma_p-} \langle k_1\rangle^s\big|\hat \xi_{k_1}\big|\\
&\qquad\qquad\qquad\qquad\qquad\quad\cdot
\>\langle k_2\rangle^{s+\gamma_p-} |\hat v_{k_2}(t_n)||h_{k_1+k_2}| \\
\lesssim &
\tau^{1+a}  \|v(t_n)\|_{H^{s+\gamma_p-}_x}
\sum\limits_{k_1}\langle k_1\rangle^{2a-2s-\gamma_p-} \big|\hat \xi_{k_1}\big| \\
\lesssim &
\tau^{1+a}  \|\xi\|_{\hat b^{s,p}} \|v(t_n)\|_{H^{s+\gamma_p-}_x}
\big\|\langle k_1\rangle^{2a-2s-\gamma_p-}  \big\|_{l^{p'}_{k_1}}\\
\lesssim &
\tau^{1+a}  \|\xi\|_{\hat b^{s,p}} \|v(t_n)\|_{H^{s+\gamma_p-}_x}.
\end{align*}
Therefore, together with the estimates in the above two cases, we finally get that
$$
\|\eqref{Rn2-1}\|_{l^2_k}
\lesssim
\tau^{1+a}  \|\xi\|_{\hat b^{s,p}} \|v(t_n)\|_{H^{s+\gamma_p-}_x}.
$$

For \eqref{Rn2-2}, we apply the estimate:
\begin{align*}
\Big| \mathcal M_\tau\big(\fe^{is(k^2-k_2^2)}\big)- \mathcal M_\tau\big(\fe^{isk^2}\big)\mathcal M_\tau\big(\fe^{-isk_2^2}\big)\Big|
\lesssim & \tau^a|k|^{2a-2}|k_2|^{2}.
\end{align*}
Again, by duality and noting that $|\xi_1|\sim |\xi_2|$, we have
\begin{equation*}
 \begin{aligned}
\|\eqref{Rn2-2}\|_{l^2_k}
\lesssim &\tau^{1+a} \Bigg\|\sum\limits_{\substack{k_1+k_2=k\\|k|\gg  |k_2|}}
\langle k_1\rangle^{2a-2}\langle k_2\rangle^{2}
\big|\hat \xi_{k_1}\big| |\hat v_{k_2}(t_n)|\Bigg\|_{l^2_k}\\
\lesssim &
\tau^{1+a}   \sup\limits_{h:\|h\|_{l^2}=1}
\sum\limits_{\substack{k_1+k_2=k\\|k_1|\gg  |k_2|}}
\langle k_1\rangle^{2a-2}\langle k_2\rangle^{2}
\big|\hat \xi_{k_1}\big| |\hat v_{k_2}(t_n)| |h_k|\\
\lesssim &
\tau^{1+a}   \sup\limits_{h:\|h\|_{l^2}=1}
\sum\limits_{\substack{k_1,k_2\\|k|\gg |k_2|}}
\langle k_1\rangle^{-s+2a-2}\langle k_2\rangle^{-s-\gamma_p-}\langle k_1\rangle^s\big|\hat \xi_{k_1}\big| \\
&\qquad\qquad\qquad\qquad\cdot
\>\langle k_2\rangle^{s+\gamma_p-} |\hat v_{k_2}(t_n)| |h_{k_1+k_2}|.
\end{aligned}
\end{equation*}
Note that
$$
-s+2a-2+\frac1r<0\quad \mbox{ and }\quad -s-\gamma_p+\frac12<0,
$$
then by Cauchy-Schwartz's inequality, we have
\begin{align*}
\|\eqref{Rn2-2}\|_{l^2_k}
 \lesssim &
\tau^{1+a}\big\|\langle k_1\rangle^{-s+2a-2}\big\|_{l^r_{k_1}}
 \big\|\langle k_2\rangle^{-s-\gamma_p+}\big\|_{l^2_{k_2}}
\|\xi\|_{\hat b^{s,p}} \|v(t_n)\|_{H^{s+\gamma_p-}_x} \\
\lesssim &
\tau^{1+a} \|\xi\|_{\hat b^{s,p}}  \|v(t_n)\|_{H^{s+\gamma_p-}_x}.
\end{align*}

Therefore, combining with two estimates on \eqref{Rn2}, we obtain
 \begin{align*}
\Big\|\widehat{(R^{n}_{2})}_k\Big\|_{l^2_k}
\lesssim \tau^{1+a} \|\xi\|_{\hat b^{s,p}}  \|v(t_n)\|_{H^{s+\gamma_p-}_x}.
\end{align*}
This together with Plancherel's identity give the proof of the lemma.
\end{proof}

\begin{lemma}[Local error of splitting part]
\label{lem:splitting}
Under assumptions of \cref{thm:main-N1}, for the truncation term $R^n_3$ defined in \eqref{Apprx:T2}, we have
\begin{align}\label{lem:splitting est}
\big\|R_3^n\big\|_{L^2}
\le C\tau\left(\tau^{\min\left\{1,\frac12(s+\gamma_p)-\right\}}+N^{-s-\gamma_p+}\right),
\end{align}
where the constant $C>0$ only depends on $\|u\|_{L^\infty_t H_x^{s+\gamma_p-}}$.
\end{lemma}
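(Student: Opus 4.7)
The strategy is to isolate three sources of error in $R_3^n$. Using $w=\fe^{i\tau\partial_x^2}u(t_n)$ and $\fe^{-it_{n+1}\partial_x^2}w=v(t_n)$, one first rewrites
\begin{equation*}
R_3^n = i\int_{t_n}^{t_{n+1}} \fe^{-i\rho\partial_x^2}[|u(\rho)|^2 u(\rho)]\,d\rho - \fe^{-it_{n+1}\partial_x^2}\bigl[(\fe^{i\tau|P_{\le N}w|^2}-1)w\bigr].
\end{equation*}
Taylor-expanding the exponential as $\fe^{iy}-1 = iy + O(y^2)$ with $y=\tau|P_{\le N}w|^2$, and using $\|w\|_{L^\infty}\lesssim \|u(t_n)\|_{H^{\frac12+}}\le C$ (which holds since $s+\gamma_p>\tfrac32$), controls the quadratic Taylor tail by $C\tau^2$, comfortably within the claimed bound. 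Next, replacing $|P_{\le N}w|^2$ by $|w|^2$ in the linear-in-$\tau$ term produces a filter error bounded by $\|(|P_{\le N}w|^2-|w|^2)w\|_{L^2}\lesssim \|w\|_{L^\infty}^2\|P_{>N}w\|_{L^2}\le CN^{-(s+\gamma_p)+}\|u(t_n)\|_{H^{s+\gamma_p-}}$, which yields the $\tau N^{-(s+\gamma_p)+}$ contribution. What remains is the ``pure'' splitting error
\begin{equation*}
E := \int_{t_n}^{t_{n+1}} \fe^{-i\rho\partial_x^2}[|u|^2 u](\rho)\,d\rho - \tau \fe^{-it_{n+1}\partial_x^2}[|w|^2 w],
\end{equation*}
to be bounded by $C\tau^{1+\min\{1,(s+\gamma_p)/2-\}}$.

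For this, I would introduce the free evolution $u_\mathrm{lin}(\rho):=\fe^{i(\rho-t_n)\partial_x^2}u(t_n)$, so that $u_\mathrm{lin}(t_{n+1})=w$, and split $E=E^{(1)}+E^{(2)}$ with $E^{(1)}=\int_{t_n}^{t_{n+1}}\fe^{-i\rho\partial_x^2}[|u|^2 u - |u_\mathrm{lin}|^2 u_\mathrm{lin}](\rho)\,d\rho$ and $E^{(2)}=\int_{t_n}^{t_{n+1}}[\fe^{-i\rho\partial_x^2}|u_\mathrm{lin}|^2 u_\mathrm{lin}-\fe^{-it_{n+1}\partial_x^2}|w|^2 w]\,d\rho$. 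For $E^{(1)}$, the cubic Lipschitz estimate $\||f|^2 f-|g|^2 g\|_{L^2}\lesssim \|f-g\|_{L^2}(\|f\|_{L^\infty}^2+\|g\|_{L^\infty}^2)$ combined with the $H^{\frac12+}$-bounds on $u$ and $u_\mathrm{lin}$ reduces matters to $\|u(\rho)-u_\mathrm{lin}(\rho)\|_{L^2}=\|v(\rho)-v(t_n)\|_{L^2}$, which \cref{lem:v-s-tn} with $\beta=0$ bounds by $C\tau\max\{\tau^{(s+1/p)/2-1/4-},1\}$; an arithmetic check then shows the resulting rate is exactly $\tau^{1+\min\{1,(s+\gamma_p)/2-\}}$.

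The key technical step is $E^{(2)}$. Passing to Fourier and using $\widehat{u_\mathrm{lin}(\rho)}_k=\fe^{-i(\rho-t_n)k^2}\widehat{u(t_n)}_k$, one reduces each Fourier coefficient of $E^{(2)}$ to a trilinear sum over the resonance $-k_1+k_2+k_3=k$ in which the time-dependent factor is $\int_{t_n}^{t_{n+1}}\fe^{i\rho\psi}\,d\rho-\tau\fe^{it_{n+1}\psi}$, with $\psi=k^2+k_1^2-k_2^2-k_3^2$. A direct computation gives this factor equal to $\fe^{it_{n+1}\psi}(1-\fe^{-i\tau\psi}-i\tau\psi)/(i\psi)$ for $\psi\ne 0$ and $0$ otherwise, with modulus at most $\min(\tau^2|\psi|,\tau)$. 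Interpolating at exponent $\alpha:=\min\{1,(s+\gamma_p)/2-\}$ yields $\lesssim \tau^{1+\alpha}|\psi|^\alpha$, and $|\psi|^\alpha\lesssim |k|^{2\alpha}+|k_1|^{2\alpha}+|k_2|^{2\alpha}+|k_3|^{2\alpha}$ distributes the weight onto the three cubic factors. Since $s+\gamma_p>\tfrac32$ makes $H^{s+\gamma_p-}$ an algebra embedding into $L^\infty$, the resulting trilinear $l^2_k$-sum reduces to $\|u(t_n)\|_{H^{2\alpha}}\cdot\|u(t_n)\|_{H^{\frac12+}}^2\lesssim \|u(t_n)\|_{H^{s+\gamma_p-}}^3$, uniformly controlled by \cref{main:thm1} (or \cref{main:thm3-smoothdata}).

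The principal obstacle will be this final trilinear Fourier estimate, namely distributing the weight $|\psi|^\alpha$ across the three factors while preserving $l^2_k$-summability at the borderline $2\alpha\approx s+\gamma_p$; it is structurally very close to the analysis already performed for $R_2^n$ in \cref{lem:est-Rn2} with the help of \cref{lem:average2}, and I would pattern the argument on that proof, substituting the cubic convolution for the bilinear interaction with the potential.
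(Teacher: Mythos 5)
Your proposal is correct, but it handles the ``pure splitting error'' by a genuinely different route than the paper. The paper splits $R_3^n$ into five pieces $R_{31}^n,\dots,R_{35}^n$: your Taylor-tail and filter errors coincide with its $R_{35}^n$ and $R_{34}^n$, but for the remaining comparison the paper never passes to Fourier space or exploits the resonance phase. It instead applies the soft bound $\|(\fe^{i\sigma\partial_x^2}-1)f\|_{L^2}\lesssim|\sigma|^{a}\|f\|_{H^{2a}}$ three separate times --- to the substitution $u(\rho)\mapsto\fe^{i\tau\partial_x^2}u(\rho)$ inside the cubic term, to the propagator change $\fe^{-i\rho\partial_x^2}\mapsto\fe^{-it_{n+1}\partial_x^2}$, and (via a direct equation-based estimate of $\|u(\rho)-u(t_n)\|_{L^2}$ with a frequency cutoff at $N$) to the substitution $u(\rho)\mapsto u(t_n)$ --- each yielding the rate $\tau^{\alpha}$ with $\alpha=\min\{1,\tfrac12(s+\gamma_p)-\}$. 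Your decomposition $E=E^{(1)}+E^{(2)}$ is organized around the free evolution instead: $E^{(1)}$ reduces to $\|v(\rho)-v(t_n)\|_{L^2}$ and \cref{lem:v-s-tn} with $\beta=0$ (your arithmetic $1+\min\{\tfrac12(s+\tfrac1p)-\tfrac14-,0\}=\min\{1,\tfrac12(s+\gamma_p)-\}$ checks out), which is arguably cleaner than the paper's $R_{33}^n$ since the Duhamel increment carries no free-evolution part and hence no extra $N$-dependent term; on the other hand, your $E^{(2)}$ invokes a trilinear oscillatory analysis with the phase $\psi=k^2+k_1^2-k_2^2-k_3^2$, the interpolation $\min\{\tau^2|\psi|,\tau\}\lesssim\tau^{1+\alpha}|\psi|^{\alpha}$, and a weight-distribution product estimate, which is heavier machinery than this term requires --- writing $E^{(2)}$ as a propagator-difference term plus a term controlled by $\|(\fe^{i(\rho-t_{n+1})\partial_x^2}-1)w\|_{L^2}\lesssim\tau^{\alpha}\|w\|_{H^{2\alpha}}$ recovers the paper's elementary argument. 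Both routes land on the same bound \eqref{lem:splitting est}, and the regularity $2\alpha\le s+\gamma_p-$ needed for your trilinear step is available from \cref{main:thm1}.
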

 \begin{proof}
First of all, we rewrite \eqref{Apprx:T2} into five parts $R^n_3=R^n_{31}+\cdots+R^n_{35}$ with
\begin{align*}
 &R^n_{31}\triangleq i\int_{t_n}^{t_{n+1}}\fe^{-i\rho\partial_x^2}\left[|u(\rho)|^2u(\rho)-|\fe^{i\tau\partial_x^2}u(\rho)|^2
 \fe^{i\tau\partial_x^2}u(\rho)\right]d\rho,\\
 &R^n_{32}\triangleq i\int_{t_n}^{t_{n+1}}(\fe^{-i\rho\partial_x^2}-\fe^{-it_{n+1}\partial_x^2})\left(|\fe^{i\tau\partial_x^2}u(\rho)|^2
 \fe^{i\tau\partial_x^2}u(\rho)\right)d\rho,\\
 &R^n_{33}\triangleq i\int_{t_n}^{t_{n+1}}\fe^{-it_{n+1}\partial_x^2}\left[|\fe^{i\tau\partial_x^2}u(\rho)|^2
 \fe^{i\tau\partial_x^2}u(\rho)-|\fe^{i\tau\partial_x^2}u(t_n)|^2
 \fe^{i\tau\partial_x^2}u(t_n)\right]d\rho,
\end{align*}
and
\begin{align*}
  &R^n_{34}\triangleq i\tau\fe^{-it_{n+1}\partial_x^2}\left[|\fe^{i\tau\partial_x^2}u(t_n)|^2-|P_{\leq N}\fe^{i\tau\partial_x^2}u(t_n)|^2\right]
 \fe^{i\tau\partial_x^2}u(t_n),\\
 &R^n_{35}\triangleq\fe^{-it_{n+1}\partial_x^2}\left[\fe^{i\tau\partial_x^2}u(t_n)+i\tau|P_{\leq N}\fe^{i\tau\partial_x^2}u(t_n)|^2\fe^{i\tau\partial_x^2}u(t_n)-\mathcal{N}_\tau[\fe^{i\tau\partial_x^2}u(t_n)]\right].
\end{align*}

For $R^n_{31}$,  note that the inequality $|\fe^{ix}-1|\leq 2^{2-a}|x|^{a}$ holds for any $x\in\R$ and $0\leq a\leq1$, and $s+\gamma_p->\frac12$. Then, by the triangle inequality and Sobolev inequality we have
\begin{align*}
  &\|R^n_{31}\|_{L^2}\lesssim \tau \big\|(\fe^{i\tau\partial_x^2}-1)u\big\|_{L_t^\infty L_x^2}\|u\|_{L^\infty_t H^{\frac12+}_x}^2
  \leq C\tau^{1+\alpha},
\end{align*}
for
\begin{equation*}
  \alpha=\left\{\begin{split}
                   &1,\qquad\quad\quad s+\gamma_p>2,\\
                   &\frac{s+\gamma_p-}{2},\quad s+\gamma_p\leq2.
                \end{split}\right.
\end{equation*}
Here and after $C>0$ is  some constant dependent on $\|u\|_{L^\infty_tH_x^{s+\gamma_p-}}$.

For $R^n_{32}$, we have
\begin{align*}
  \|R^n_{32}\|_{L^2}&\leq \int_{t_n}^{t_{n+1}}\left\|(\fe^{-i(\rho-t_{n+1})\partial_x^2}-1)
  \left(|\fe^{i\tau\partial_x^2}u(\rho)|^2
 \fe^{i\tau\partial_x^2}u(\rho)\right)\right\|_{L^2}d\rho\\
 &\lesssim\int_{t_n}^{t_{n+1}}(\rho-t_{n+1})^\alpha\left\|u(\rho)\right\|_{H^{s+\gamma_p-}}^3d\rho\leq C\tau^{1+\alpha}.
\end{align*}

For $R^n_{33}$, by the triangle inequality and Sobolev inequality we have
\begin{align}\label{est:Rn-33}
\big\|R^n_{33}\big\|_{L^2}
\lesssim  &
 \int_{t_n}^{t_{n+1}}\|u(\rho)-u(t_n)\|_{L^2}\|u\|_{H^{\frac12+}_x}^2\, d\rho
.
\end{align}
Noting that
\begin{align*}
&\big\|u(\rho)-u(t_n)\big\|_{L^\infty_tL^2_x([t_n,t_{n+1}]\times\T)}\\
\lesssim  &
\big\|P_{\le N}\big(u(\rho)-u(t_n)\big)\big\|_{L^\infty_tL^2_x([t_n,t_{n+1}]\times\T)}
+\big\|P_{> N}\big(u(\rho)-u(t_n)\big)\big\|_{L^\infty_tL^2_x([t_n,t_{n+1}]\times\T)}\\
\lesssim  &
\tau \big\|P_{\le N}\partial_t u\big\|_{L^\infty_tL^2_x([t_n,t_{n+1}]\times\T)}
+N^{-s-\gamma_p+}\|u\|_{L^\infty_tH_x^{s+\gamma_p-}}\\
\lesssim  &
\tau \big\|P_{\le N}(\partial_{xx} u+\xi u-\lambda |u|^2u )\big\|_{L^\infty_tL^2_x([t_n,t_{n+1}]\times\T)}
+N^{-s-\gamma_p+}\|u\|_{L^\infty_tH_x^{s+\gamma_p-}}\\
\le & C \left(\tau N^{2-2\alpha}+ N^{-s-\gamma_p+}\right).
\end{align*}
Hence, by Young's inequality,
\begin{align*}
\big\|u(\rho)-u(t_n)\big\|_{L^\infty_tL^2_x([t_n,t_{n+1}]\times\T)}
\le
C\left(\tau^\alpha+N^{-s-\gamma_p+}\right).
\end{align*}
Inserting this inequality into \eqref{est:Rn-33}, we have
\begin{align*}
\big\|R^n_{33}\big\|_{L^2}
\le
C\tau \left(\tau^\alpha+N^{-s-\gamma_p+}\right).
\end{align*}

For $R^n_{34}$, we have
$$
\|R^n_{34}\|_{L^2}\leq C\tau\|u(t_n)-P_{\leq N}u(t_n)\|_{L^2}\leq C\tau N^{-s-\gamma_p+}.
$$

For $R^n_{35}$, we have
\begin{align*}
 \|R^n_{35}\|_{L^2}&\lesssim
 \left\|1+i\tau|P_{\leq N}\fe^{i\tau\partial_x^2}u(t_n)|^2-\exp\left({i\tau|P_{\leq N}\fe^{i\tau\partial_x^2}u(t_n)|^2}\right)\right\|_{L^2}\\
 &\lesssim
 \tau^2\left\||P_{\leq N}\fe^{i\tau\partial_x^2}u(t_n)|^4\right\|_{L^2} \leq C\tau^2.
\end{align*}
Combining the estimates above, we obtain the assertion (\ref{lem:splitting est}) and the proof is done.
\end{proof}

With the established stability results and the local error estimates, we are now ready to  give the proof of \cref{thm:main-N1}.

\begin{proof}

By \cref{main:thm1} and \cref{main:thm3-smoothdata}, we have $u\in L^\infty((0,T);H^{s+\gamma_p-})$.
Moreover, from the proof of \cref{main:thm1}, we have  in fact
\begin{align*}
\|u\|_{L^\infty_tH^{s+\frac32+\frac1p-}_x([0,T]\times\T)}
\le 2\|u_0\|_{H^{s+2}}.
\end{align*}
Therefore,
from \cref{lem:Phi-stab,lem:stability split part,lem:est-Rn1,lem:est-Rn2,lem:splitting}, we have that
\begin{align*}
\|h^{n+1}\|_{L^2}
\le &
\big\|\Phi\big(v^n\big)-\Phi\big(v(t_n)\big)\big\|_{L^2}+\big\|\Psi\big(v^n\big)-\Psi\big(v(t_n)\big)\big\|_{L^2}+
\big\| R^n_1\big\|_{L^2} +\big\|R^n_2 \big\|_{L^2}+\big\|R^n_3 \big\|_{L^2}\\
\le  &
(1+C\tau)\|h^n\|_{L^2}+C\tau N \|h^n\|_{L^2}^3+C\tau^{1+\min\left\{\frac14+\frac1p+s-,1\right\}}\\
&
+C\tau\left(\tau^{\min\left\{1,\frac12(s+\gamma_p)-\right\}}+N^{-s-\gamma_p+}\right),
\end{align*}
where the constant $C>0$ depends only on $\|\xi\|_{\hat b^{s,p}}$, $T$ and $\|u_0\|_{H^{s+2}}$. Noting that
$$
\frac12(s+\gamma_p)-\min\left\{\frac14+\frac1p+s-,1\right\}\ge \frac18,
$$
and so by taking $N= \tau^{-\frac12+\eps_0}$ for any fixed $0<\eps_0\leq\frac{1}{8(s+\gamma_p)}$,  we can further get
\begin{align}
&\big\|h^{n+1}\big\|_{L^2}
\le
C_1\tau^{1+\alpha}+(1+C_2\tau)\|h^n\|_{L^2}+C_3\tau^{\frac12+\eps_0} \|h^n\|_{L^2}^3,\quad 0\leq n<\frac{T}{\tau},\label{hn-indu}
\end{align}
where the constants $C_j>0, j=1,2,3$ depend only on $\|\xi\|_{\hat b^{s,p}}$, $T$ and $\|u_0\|_{H^{s+2}}$.
Here and after, we denote $\alpha=\min\{\frac14+\frac1p+s-,1\}$ for short.

Now we claim that there exists some $\tau_0>0$ (to be determined) such that for any $\tau\in (0,\tau_0]$,
\begin{align}\label{Hy}
\left\|h^{n}\right\|_{H^\gamma}
\le C_1 \tau^{1+\alpha} \sum\limits_{j=0}^n(1+2 C_2\tau)^j,\quad n=0,1,\ldots,\frac{T}{\tau}.
\end{align}
We prove it by induction, see \cite{WuZhao-BIT} for a similar process. Firstly, since $h^0\equiv0$, \eqref{Hy} trivially  holds for $n=0$.  Now we assume that it holds 
till some $0\le n_0\le \frac{T}{\tau}-1$, i.e.,
\begin{align}\label{Hy-1}
\left\|h^{n}\right\|_{H^\gamma}
\le C_1 \tau^{1+\alpha} \sum\limits_{j=0}^n(1+2C_2\tau)^j, \quad \forall 0\le n\le n_0.
\end{align}
From \eqref{Hy-1}, we have that for any $ 0\le n\le n_0$,
\begin{align}\label{bound-v-vn}
\left\|h^{n}\right\|_{H^\gamma}
\le C_4\tau^\alpha,
\end{align}
where $C_4=C_1 C_2^{-1}\fe^{2C_2T}$. Then by \eqref{hn-indu}, we find
\begin{align*}
\left\|h^{n_0+1}\right\|_{H^\gamma}
\le & C_1\tau^{1+\alpha}+\left(1+ C_2\tau+C_3C_4^2\tau^{\frac12+\eps_0+2\alpha}\right)
\cdot C_1 \tau^{1+\alpha} \sum\limits_{j=0}^{n_0}(1+2C_2\tau)^j.
\end{align*}
Note that $\frac12+\eps_0+2\alpha\ge 1+\frac12\eps_0$, then by choosing some $\tau_0\in(0,1]$ such that $C_3C_4^2\tau_0^{\frac12\eps_0}\le C_2$, we can further have that for any $\tau\in (0,\tau_0]$,
\begin{align*}
\left\|h^{n_0+1}\right\|_{H^\gamma}
\le & C_1\tau^{1+\alpha}+\left(1+ C_2\tau+C_3C_4^2\tau^{1+\frac12\eps_0}\right)
\cdot C_1 \tau^{1+\alpha} \sum\limits_{j=0}^{n_0}(1+2C_2\tau)^j\\
= & C_1\tau^{1+\alpha}+
 C_1 \tau^{1+\alpha} \sum\limits_{j=0}^{n_0}(1+2C_2\tau)^{j+1}\\
=&C_1\tau^{1+\alpha} \sum\limits_{j=0}^{n_0+1}(1+2C_2\tau)^{j} .
\end{align*}
This finishes the induction and proves the claim \eqref{Hy}.

Then by iteration, we have
$$
\big\|h^n\big\|_{L^2}
\le  C \tau^{\min\left\{\frac14+\frac1p+s-,1\right\}},
$$
which finishes the proof of the theorem.
\end{proof}

\vskip1cm

\section{Numerical experiments}\label{sec: num result}
In this section, we will carry out some numerical tests to verify the given theoretical results on the regularity of the solution of (\ref{model}) and on the accuracy of the proposed numerical scheme (\ref{NuSo-NLS}). The spatial discretization of  (\ref{NuSo-NLS}) will be implemented here by the Fourier pseudo-spectral method \cite{Trefethen}. In the end, some accuracy comparisons  will be made between (\ref{NuSo-NLS}) and the existing schemes  from the literature.

\subsection{On regularity of solution}\label{sec:numer1}
Firstly, we verify the regularity results on the solution of the PDE, i.e.,  \cref{main:thm1} and \cref{main:thm3-smoothdata}. To do so, we solve the NLS model (\ref{model}) by the numerical scheme (\ref{NuSo-NLS}) with very fine mesh  so that the computations are accurate. We fix
\begin{equation}
\lambda=1,\quad u_0(x)=\frac{\cos(x)}{2+\sin(2x)},\quad x\in(-\pi,\pi),\label{initial}
\end{equation}
for (\ref{model}) in this subsection, and we solve the equation till $t=2$ for the solution $u(t,x)$. The potential function in the following will be constructed as
\begin{equation}\label{num:xi}
\xi(x)=\mathrm{Re}\sum_{k=-N/2}^{N/2-1}\zeta_k\fe^{ik(x+\pi)},
\end{equation}
 by choosing $\zeta_k$ to determine its regularity and by taking an even integer $N$ as the total number of Fourier frequencies which is also the number of spatial grid points.

\begin{figure}[!ht]
	\centering
\psfig{figure =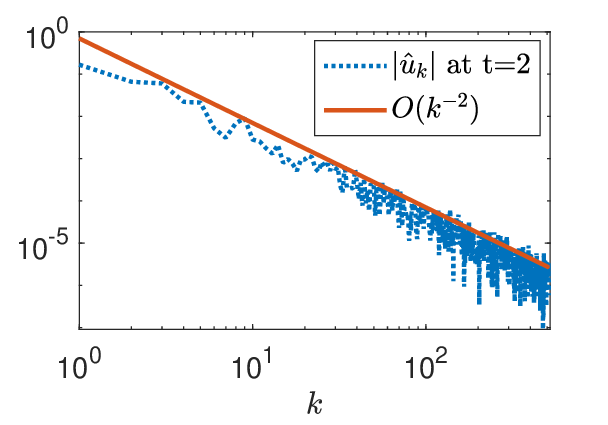, width =6cm,height=4.5cm}
	\psfig{figure =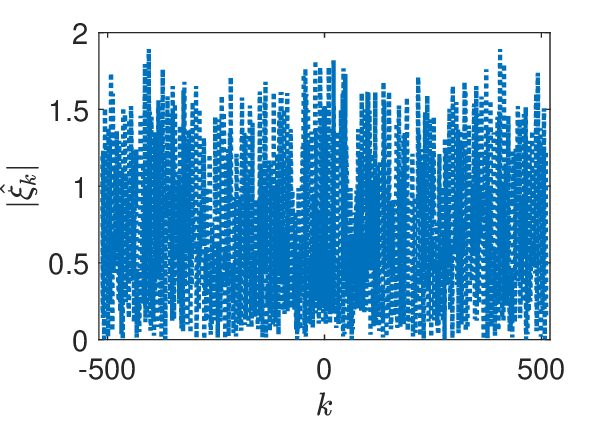, width =6cm,height=4.5cm}
	\caption{Test of \cref{main:thm1} for $s=0,p=\infty$:  modulus of Fourier coefficient of the solution (left) and the generated potential $\xi\in\hat{l}^\infty$ (right).}\label{fig:thm1}
\end{figure}
\begin{example}(Test of \cref{main:thm1} for $s=0,p=\infty$)\label{example1}
We begin with \cref{main:thm1}. Construct a $\xi(x)\in\hat{l}^\infty$ through \eqref{num:xi} by randomly generating $\zeta_k\in[-2,2]$ based on the uniform distribution. To test the regularity result, we compute the Fourier coefficient of the solution: $\hat{u}_k(t)$ at $t=2$. If $u\in H^{3/2-}$ as predicted  by \cref{main:thm1} for $s=0,p=\infty$, then $\sum_{k}|k|^{3-}|\hat{u}_k|^2<\infty$, and so $|k|^{3-}|\hat{u}_k|^2\lesssim k^{-1-}$. This implies that we expect to observe $|\hat{u}_k|\lesssim k^{-2}$  in this case. With $N=1024$, the modulus of Fourier coefficient of the solution  together with that of the generated potential are plotted against the frequency in \cref{fig:thm1}.
\end{example}

\begin{figure}[!ht]
	\centering
\psfig{figure =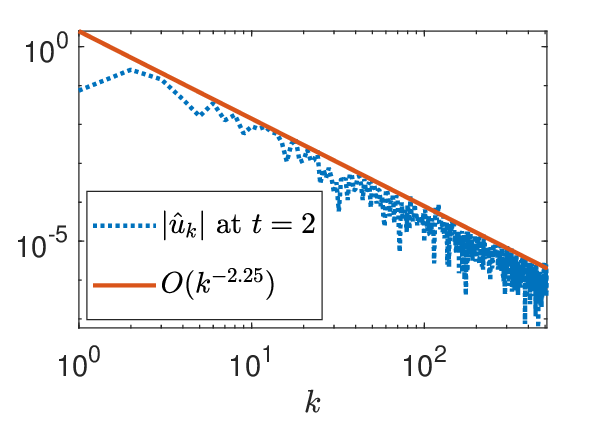, width =6cm,height=4.5cm}
	\psfig{figure =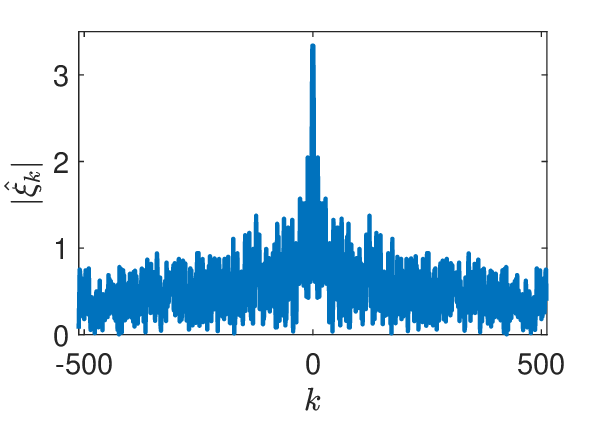, width =6cm,height=4.5cm}
	\caption{Test of \cref{main:thm1} for $s=0,p=4$: modulus of Fourier coefficient of the solution (left) and the generated potential $\xi\in\hat{l}^\infty$ (right).}\label{fig:thm1a}
\end{figure}
\begin{example}(Test of  \cref{main:thm1} for $s=0,p=4$)\label{example1a} To further test \cref{main:thm1}, we then consider a $\xi(x)\in \hat{b}^{0,4}$ through (\ref{num:xi}) by choosing
$$
\zeta_k=\left\{\begin{split}
 &(\eta_{1,k}+i\eta_{2,k})|k|^{-0.26},\quad\ \mbox{if}\ k\neq0,\\
  &1,\qquad\qquad\qquad\qquad\quad \ \, \mbox{if}\ k=0,
  \end{split}\right.$$
with $\eta_{1,k},\eta_{2,k}\in[-4,4]$ randomly generated by the uniform distribution. \cref{main:thm1} in this case predicts $u\in H^{7/4-}$, which means $\sum_{k}|k|^{7/2-}|\hat{u}_k|^2<\infty$. If so, then $|k|^{7/2-}|\hat{u}_k|^2\lesssim k^{-1-}$ and  we are expecting $|\hat{u}_k|\lesssim k^{-2.25}$ for this example.
To verify it, again we compute  $\hat{u}_k(t)$ at $t=2$ and check its decaying rate with respect to $k$.  With $N=1024$, the corresponding numerical results are shown in \cref{fig:thm1a}.
\end{example}

\begin{figure}[!ht]
	\centering
	\psfig{figure =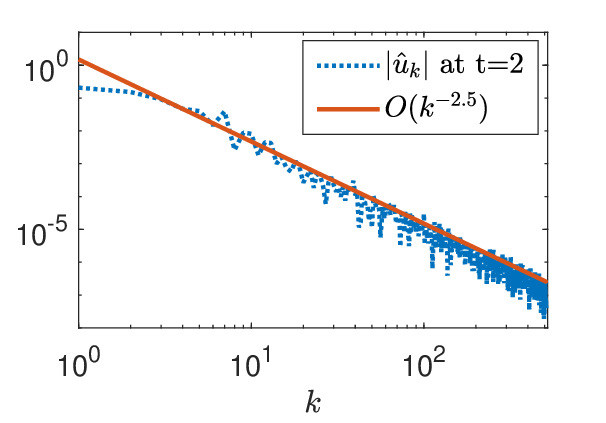, width =6cm,height=4.5cm}
	\psfig{figure =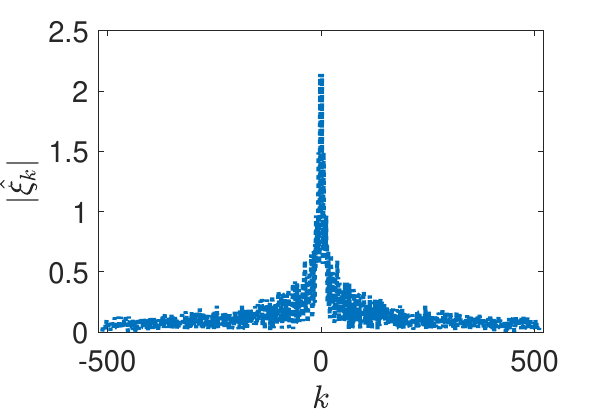, width =6cm,height=4.5cm}
	\caption{Test of \cref{main:thm3-smoothdata} for $s=0,p=2$:  modulus of Fourier coefficient of the solution (left) and the generated potential $\xi\in L^2$ (right).}\label{fig:thm4}
\end{figure}
\begin{example}(Test of  \cref{main:thm3-smoothdata} for $s=0,p=2$)\label{example2} To test \cref{main:thm3-smoothdata},
we construct a $\xi(x)\in L^2$ through (\ref{num:xi}) by choosing
$$
\zeta_k=\left\{\begin{split}
 &(\eta_{1,k}+i\eta_{2,k})|k|^{-0.6},\quad\ \mbox{if}\ k\neq0,\\
  &1,\qquad\qquad\qquad\qquad\quad  \mbox{if}\ k=0,
  \end{split}\right.$$
with $\eta_{1,k},\eta_{2,k}\in[-5,5]$ randomly generated by the uniform distribution. Now \cref{main:thm3-smoothdata}
 tells that $u\in H^{2}$, i.e., $\sum_{k}|k|^{4}|\hat{u}_k|^2<\infty$, and so $|\hat{u}_k|\lesssim k^{-2.5-}$ is expected here.
The Fourier coefficient  at $t=2$  is computed to verify the decaying rate.  With $N=1024$, the numerical results are shown in \cref{fig:thm4}.
\end{example}

All the numerical examples in this subsection illustrate that the decaying rate of the Fourier coefficient of the solution matches well with the expected value from the theorems. Thus, the theoretical results i.e., \cref{main:thm1} and \cref{main:thm3-smoothdata},
on the regularity of the solution of (\ref{model}),
 are valid and sharp. Note in addition that, the smoothness of the used initial data \eqref{initial} in the tests did  not provide more regularity for the solution than expected, and this illustrates \cref{remark1}.

\subsection{On accuracy of scheme}
Next, we test the theoretical result on the convergence order of the numerical scheme, i.e., \cref{thm:main-N1} for LRI (\ref{NuSo-NLS}). We  construct an initial data $u_0\in H^2$ for (\ref{model}) in this subsection as
\begin{equation}\label{u0 h2}
u_0(x)=\sum_{k=-N/2}^{N/2-1}\widehat{(u_0)}_k\fe^{ik(x+\pi)},\qquad
\widehat{(u_0)}_k=\left\{\begin{split}
 &\eta_{k}|k|^{-2.55},\quad \mbox{if}\ k\neq0,\\
  &0,\qquad\qquad\ \ \mbox{if}\ k=0,
  \end{split}\right.
  \end{equation}
  with $\eta_{k}$ randomly sampled from the interval $[0,1]$ by the uniform distribution.  We shall compute the numerical solution of (\ref{model}) at $t=t_n=1$, and we shall measure the relative error
\begin{equation}\label{err def}
error=\|u(t_n)-u^n\|_{L^2}/\|u(t_n)\|_{L^2}\end{equation} of the scheme. The reference solution here is obtained by using very fine mesh size.

\begin{figure}[!ht]
	\centering
\psfig{figure =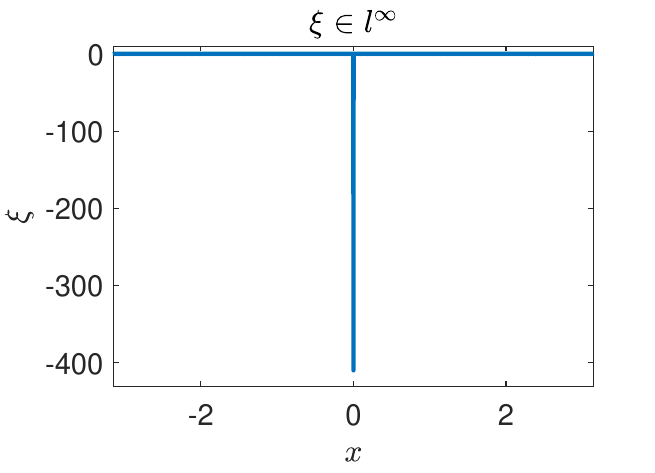, width =4.9cm,height=4.5cm}
\psfig{figure =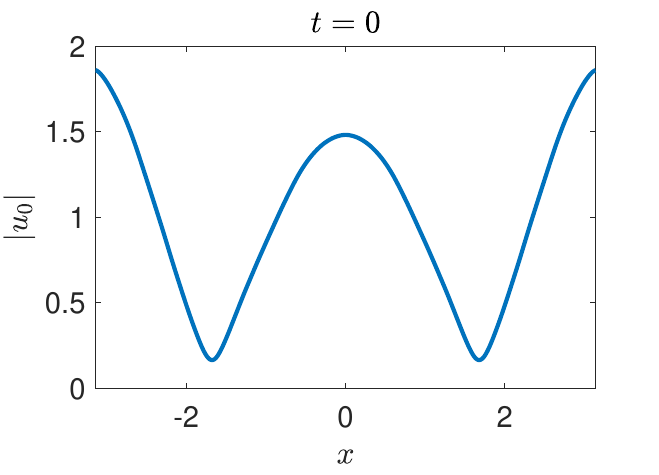, width =4.9cm,height=4.5cm}
	\psfig{figure =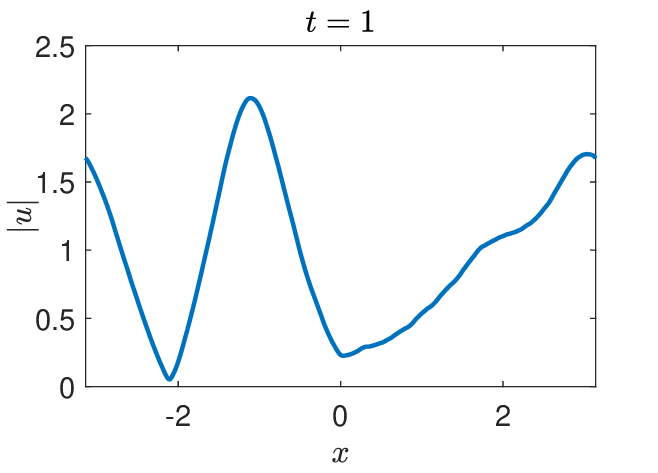, width =4.9cm,height=4.5cm}\\
	\psfig{figure =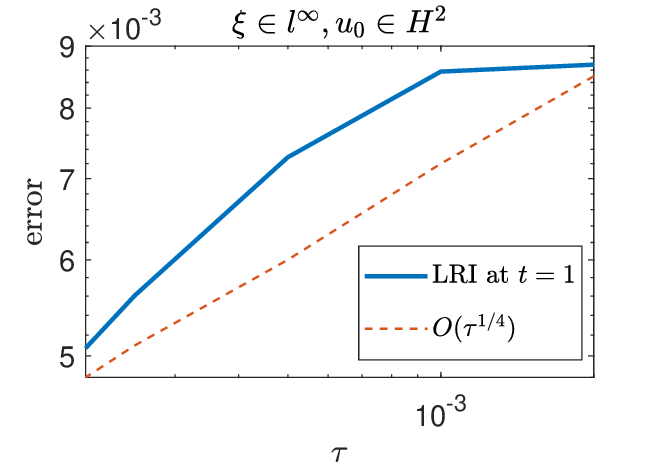, width =7.5cm,height=5cm}
	\caption{Results of \cref{ex:err1}: profiles of the potential $\xi(x)$ and the solution $|u_0(x)|$, $|u(t=1,x)|$ (1st row); error \eqref{err def} of LRI \eqref{NuSo-NLS} (2nd row).}\label{fig:thm9}
\end{figure}
\begin{example}(Lowest order in \cref{thm:main-N1})\label{ex:err1}
We take $\lambda=-2$ and the initial data (\ref{u0 h2}) for  (\ref{model}).
  A potential function $\xi\in \hat{l}^\infty$ is taken as
    $$\xi(x)=-\frac15\sum_{k=-N/2}^{N/2}\fe^{ikx}.$$
    The profiles of the potential and the solution at $t=1$ are displayed in \cref{fig:thm9}. \cref{thm:main-N1} in this case predicts the lowest convergence order for the LRI scheme (\ref{NuSo-NLS}) as $\mathcal{O}(\tau^{1/4-})$.
With the number of spatial grids $N=2048$ fixed, the  discretization error \eqref{err def} of  (\ref{NuSo-NLS}) at $t=1$ is shown in \cref{fig:thm9} under different time steps. As shown by the error curve in \cref{fig:thm9}, the averaged decreasing rate is about $1/4$.
\end{example}

\begin{figure}[h!]
	\centering
\psfig{figure =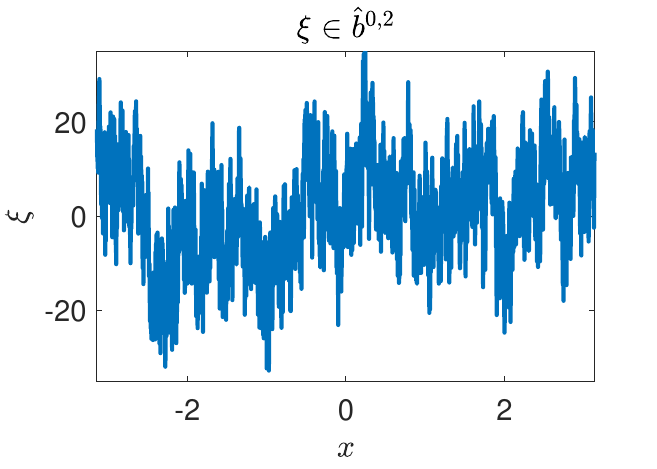, width =5cm,height=4.5cm}
\psfig{figure =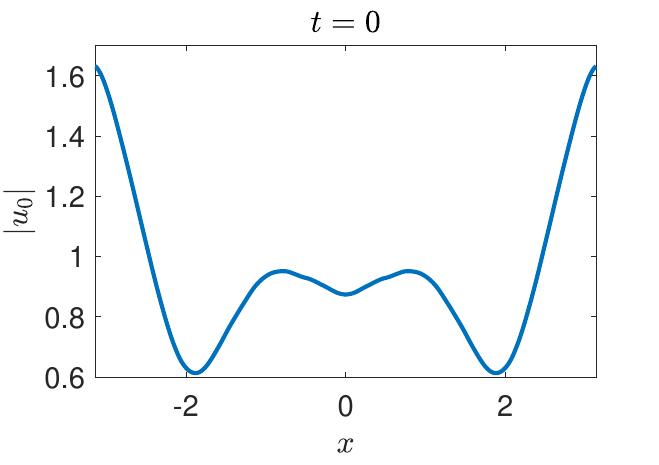, width =5cm,height=4.5cm}\psfig{figure =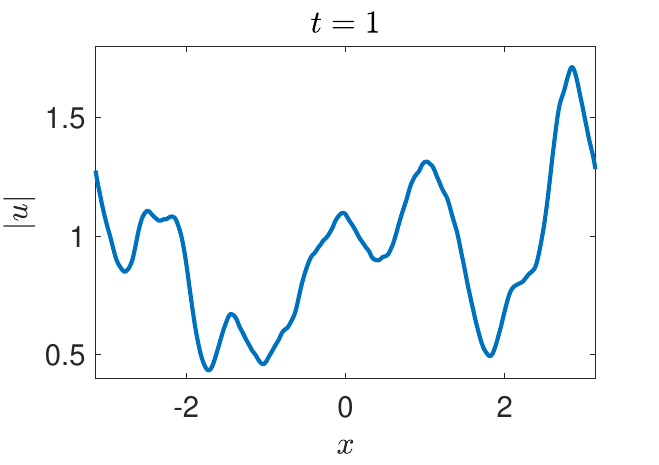, width =5cm,height=4.5cm}\\
	\psfig{figure =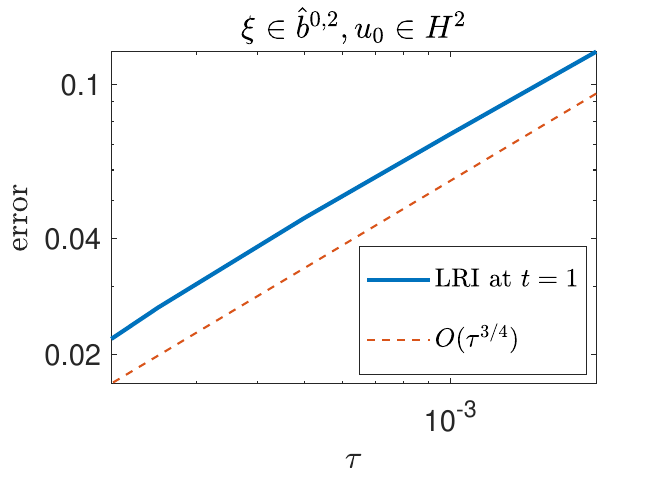, width =7.5cm,height=5cm}
	\caption{Results of \cref{ex:err2} under $\xi\in\hat{b}^{0,2}$: profiles of $\xi(x)$, $|u_0(x)|$ and $|u(t=1,x)|$ (1st row); error \eqref{err def} of LRI \eqref{NuSo-NLS} (2nd row).}\label{fig:thm10}
\end{figure}
\begin{figure}[h!]
	\centering
	\psfig{figure =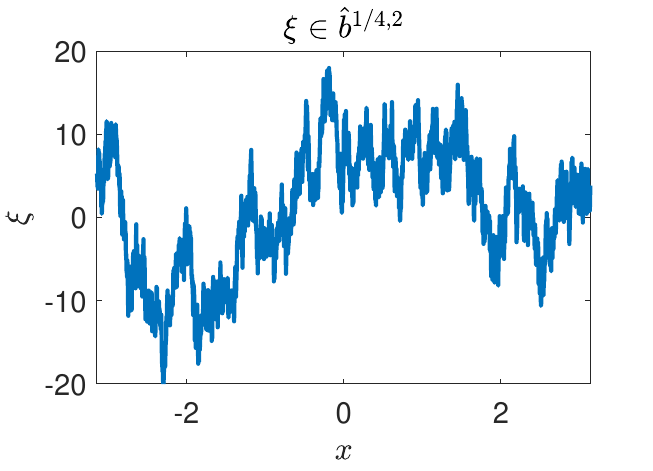, width =5cm,height=4.5cm}\psfig{figure =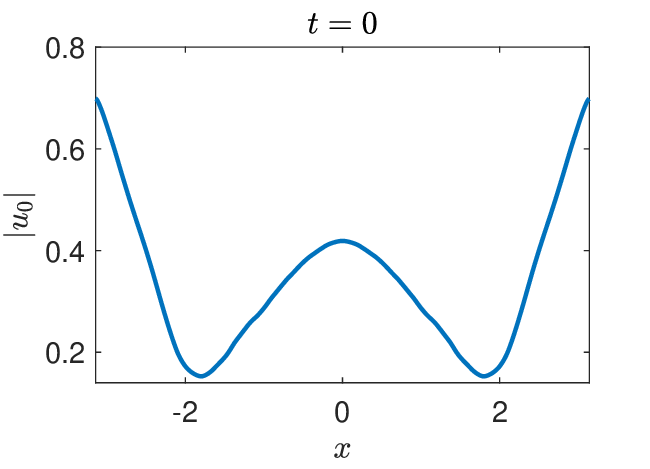, width =5cm,height=4.5cm}\psfig{figure =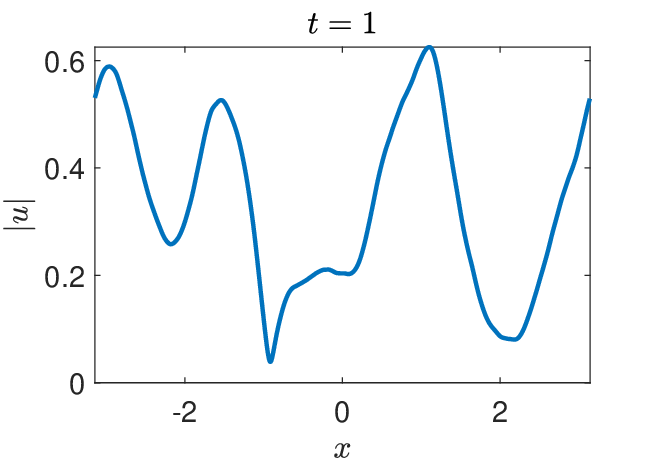, width =5cm,height=4.5cm}\\
	\psfig{figure =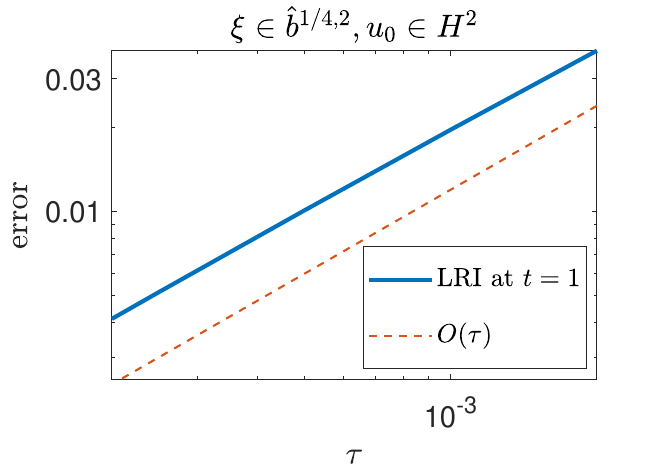, width =7.5cm,height=5cm}
	\caption{Results of \cref{ex:err2} under $\xi\in\hat{b}^{1/4,2}$ : profiles of $\xi(x)$, $|u_0(x)|$ and $|u(t=1,x)|$  (1st row); error \eqref{err def} of LRI \eqref{NuSo-NLS} (2nd row).}\label{fig:thm11}
\end{figure}
\begin{example}(Other orders in \cref{thm:main-N1}) \label{ex:err2}
Generate a $H^2$-initial data as (\ref{u0 h2}) for (\ref{model}) and  fix $\lambda=4$.
We construct the potential $\xi\in\hat{b}^{s,p}$ via
$$\xi(x)=\mathrm{Re}\sum_{k=-N/2}^{N/2-1}{\zeta}_k\fe^{ik(x+\pi)},\quad
{\zeta}_k=\left\{\begin{split}
 &(\eta_{1,k}+i\eta_{2,k})|k|^{-\delta},\quad \mbox{if}\ k\neq0,\\
  &1,\qquad\qquad\qquad\qquad \, \mbox{if}\ k=0,
  \end{split}\right.$$
  with $\eta_{1,k},\eta_{2,k}\in[-5,5]$ randomly generated by the uniform distribution.
  We consider
    $$
\delta=\left\{\begin{split}
 &0.51,\qquad \mbox{for}\quad \xi(x)\in\hat{b}^{0,2},\\
  &0.76,\qquad \mbox{for}\quad \xi(x)\in\hat{b}^{1/4,2},
  \end{split}\right.$$
  and \cref{thm:main-N1} respectively predicts the accuracy of LRI (\ref{NuSo-NLS}) as $\mathcal{O}(\tau^{3/4-})$ and $\mathcal{O}(\tau^{1-})$.
 The number of spatial grids $N=2048$ is again used and fixed for computations. For $\delta=0.51$, the  discretization error \eqref{err def} of  (\ref{NuSo-NLS}) at $t=1$  together with the profiles of the potential and solution in this case are shown in \cref{fig:thm10}. The corresponding results of $\delta=0.76$ are shown in \cref{fig:thm11}.
 The two error curves in \cref{fig:thm10} and \cref{fig:thm11} clearly decrease at the expected rates.

 In total, the observed convergence results in this subsection all match well with \cref{thm:main-N1}. This verifies its validity and indicates the sharpness of the error estimate (\ref{error-tau-1}).

\end{example}

\subsection{Accuracy comparison} At last, we conduct some numerical tests to compare  the convergence/accuracy of the proposed LRI (\ref{NuSo-NLS}) with the existing schemes in the literature for (\ref{model}). The concerned numerical schemes from the literature are listed below.
\begin{itemize}
  \item The most traditional finite difference scheme:
$i\frac{1}{\tau}(u^{n+1}-u^n)+\partial_x^2u^{n+1}+\xi u^{n+1}=\lambda|u^n|^2u^n.$
  \item The commonly used Lie-Trotter splitting scheme:
$u^{n+1}=\fe^{i\partial_x^2\tau}\fe^{-i\tau(-\xi+\lambda |u^n|^2)}u^n.$
\item The recently analyzed exponential wave integrator (EWI) \cite{Bao1}:
$u^{n+1}=\fe^{i\partial_x^2\tau}u^n-i\tau\mathcal{D}_\tau[(-\xi+
\lambda|u^n|^2)u^n].$
\item The recently proposed LRI from \cite{Bronsard}:
$u^{n+1}=\fe^{i\partial_x^2\tau}[u^n+i\tau u^n\mathcal{D}_{-\tau}\xi-i\tau\lambda(u^n)^2
\mathcal{D}_{-2\tau}\overline{u^n}].$
\end{itemize}
Note that the schemes presented above all have higher order versions, but their high order accuracy is achieved only when the setup of the NLS model is smooth enough.
Here what we would like to address is the performance of schemes under the rough setup, so we choose to focus on the first order schemes for tests and comparisons. The reference solution and the measure of error are set the same as before.

\begin{figure}[h!]
	\centering
	\psfig{figure =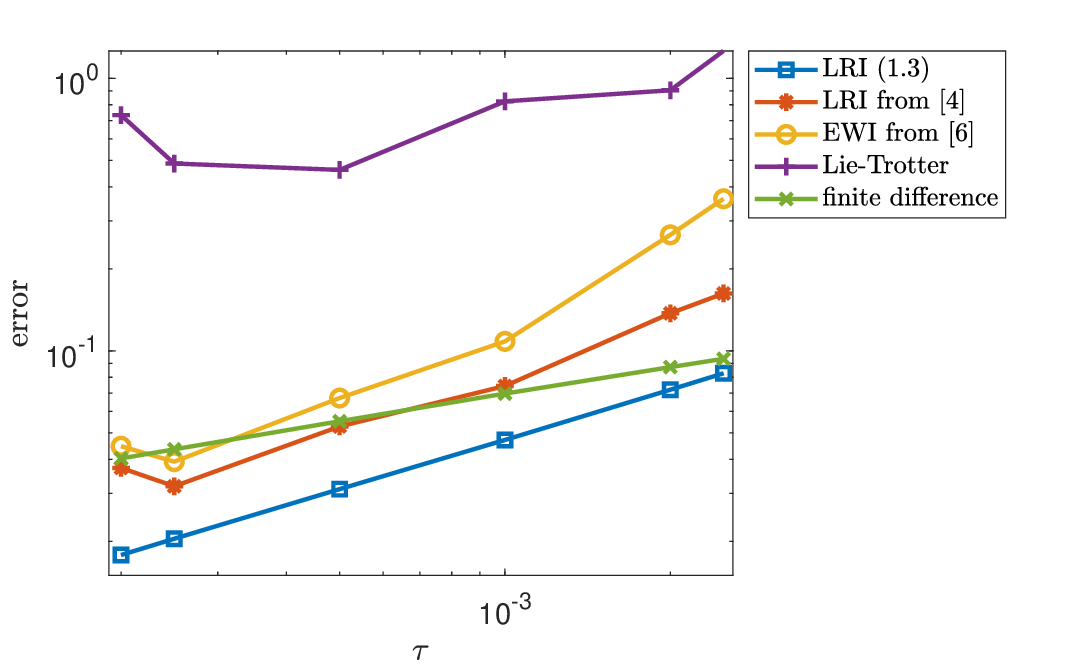, width =7.5cm,height=5cm}
	\psfig{figure =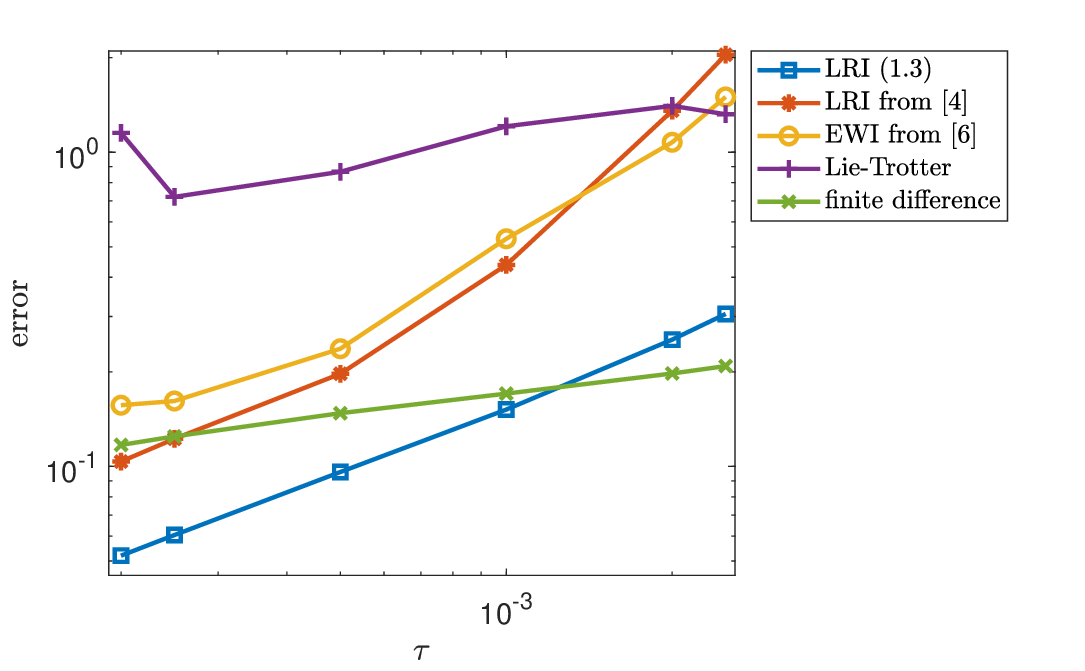, width =7.5cm,height=5cm}
	\caption{Accuracy comparison of schemes for \cref{num:comp ex1}: with the set of data \eqref{num:comp ex1 set1} (left); with the set of data \eqref{num:comp ex1 set2} (right).}\label{fig:comp1}
\end{figure}
\begin{example}\label{num:comp ex1}
We first perform comparisons of schemes within the setup of \cref{thm:main-N1}: $u_0\in H^2,s=0,p=\infty$.
Generate an initial data $u_0\in H^2$ for (\ref{model}) as
\begin{equation*}
u_0(x)=\sum_{k=-N/2}^{N/2-1}\widehat{(u_0)}_k\fe^{ik(x+\pi)},\qquad
\widehat{(u_0)}_k=\left\{\begin{split}
 &\eta_{k}|k|^{-2.51},\quad \mbox{if}\ k\neq0,\\
  &0,\qquad\qquad\ \ \mbox{if}\ k=0.
  \end{split}\right.
  \end{equation*}
  The coefficients $\eta_k,\lambda$ and the potential $\xi\in\hat{l}^\infty$ for (\ref{model}) are taken as the following two sets of data.
  The first set considers the $\delta$-type potential function:
\begin{equation}
\eta_{k}\in[0,1/3],\quad  \lambda=-2,\quad \xi(x)=-\sum_{k=-N/2}^{N/2}\left(\fe^{ikx}+\fe^{ik(x+2)}+\fe^{ik(x-2)}\right).\label{num:comp ex1 set1}
\end{equation}
The second set considers more general rough potential:
\begin{equation}
\mathrm{Re}(\eta_{k})\in[0,1/3],\  \mathrm{Im}(\eta_{k})\in[0,1/5],\quad \lambda=-0.05,\quad \xi(x)=\mathrm{Re}\sum_{k=-N/2}^{N/2-1}{\zeta}_k\fe^{ik(x+\pi)},\label{num:comp ex1 set2}
\end{equation}
with $\zeta_{k}\in[0,4]$ sampled by the uniform distribution. Under (\ref{num:comp ex1 set1}) or \eqref{num:comp ex1 set2} with $N=2048$ fixed, the error of each concerned numerical scheme  at $t=1$ is given in \cref{fig:comp1}.

It can be seen from the plots in \cref{fig:comp1} that the proposed LRI (\ref{NuSo-NLS}) is more accurate than all the other considered schemes. In contrast, the LRI \cite{Bronsard} and the EWI \cite{Bao1}  converge slowly and the accuracy order is unclear, while the Lie-Trotter splitting scheme is not working at all.
  \end{example}

\begin{figure}[h!]
	\centering
	\psfig{figure =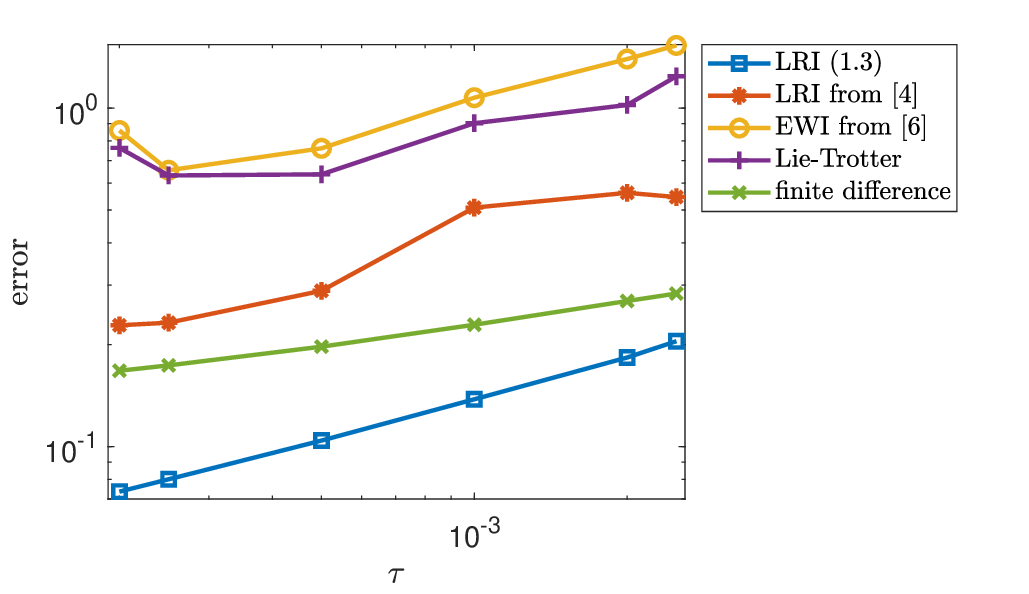, width =10cm,height=5cm}
	\caption{Accuracy comparison of schemes for \cref{num:comp ex2}.}\label{fig:comp2}
\end{figure}
\begin{example}\label{num:comp ex2}
We then test the performance of the schemes under rougher initial data for (\ref{model}) by generating a $u_0\in L^2$ as
\begin{equation*}
u_0(x)=\sum_{k=-N/2}^{N/2-1}\widehat{(u_0)}_k\fe^{ik(x+\pi)},\quad
\widehat{(u_0)}_k=\left\{\begin{split}
 &\eta_{k}|k|^{-1.1},\quad \mbox{if}\ k\neq0,\\
  &0,\qquad\qquad\  \mbox{if}\ k=0,
  \end{split}\right.
  \end{equation*}
  with the other parameters set as \eqref{num:comp ex1 set1}. With the number of spatial grid points fixed as $N=2048$, the error of each scheme  at $t=1$ is given in \cref{fig:comp2}. Although the tested setup is not covered by \cref{thm:main-N1}, we can see that the proposed LRI scheme (\ref{NuSo-NLS}) is still working well in this case, and its accuracy is much better than the others. This illustrates that (\ref{NuSo-NLS}) can have more advantages for solving the NLS equation (\ref{model}) under rough setup.
  \end{example}

\vskip1cm
\section{Conclusion}\label{sec: con}
We present new, sharp results of the cubic nonlinear Schr\"odinger equation (NLS) with a spatially rough potential, posed on a one-dimensional torus which is the mathematical model for nonlinear Anderson localization. We deal both with the well/ill-posedness analysis on the PDE level and its application for numerical discretization and convergence analysis.
In the PDE analysis, we provide insights into how the regularity of the solution is impacted by the regularity of the potential, offering quantitative and explicit characterizations. Additionally, we establish ill-posedness results to demonstrate the sharpness of our regularity characterizations and to identify the minimum required regularity of the potential for the solvability of the NLS model.
Based on our regularity results, we design a suitable numerical discretization for the model and demonstrate its convergence with an optimal error bound. The numerical experiments demonstrate the theoretical regularity results on the PDE level and also validate the established convergence rate of the proposed scheme. Furthermore, some comparisons with existing schemes are given, showcasing the superior accuracy of our scheme in the case of a rough potential.

\section*{Acknowledgements}
Financial support from the Austrian Science Fund (FWF) via the SFB
project F65 is acknowledged.
Y. Wu is partially supported by NSFC 12171356.   X. Zhao is partially supported by NSFC 12271413 and the Natural Science Foundation of Hubei Province 2019CFA007. X. Zhao thanks the Wolfgang Pauli Institute Vienna for hospitality.

\bibliographystyle{model1-num-names}

\end{document}